\numberwithin{figure}{section}
\numberwithin{table}{section}
\newtheorem{theorem}{Theorem}[section]
\newtheorem{lemma}[theorem]{Lemma}
\newtheorem{prop}[theorem]{Proposition}
\theoremstyle{definition}
\newtheorem{definition}[theorem]{Definition}
\newtheorem{cor}[theorem]{Corollary}
\newtheorem{notation}[theorem]{Notation}
\theoremstyle{remark}
\newtheorem{remark}[theorem]{Remark}
\numberwithin{equation}{section}
\def \H{{\mathbb H}}
\def \R{{\mathbb R}}
\def \h{{\mathfrak h}}
\def \Z{{\mathbb Z}}
\def \SS{{\mathbb S}}
\def \[{[ }
\def \]{] }
\def \C{{\mathcal C}}
\def \M{{\mathcal M}}
\def \S{{\mathcal S}}
\def \L{{\mathcal L}}
\def \tr{\text{\, tr}}
\def \C{\mathfrak C}
\def \T{\mathcal T}
\def \M{\mathcal M}
\begin{document}

\author{Anna Felikson}
\address{Independent University of Moscow}
\curraddr{Jacobs University Bremen}
\email{a.felikson@jacobs-university.de}
\thanks{Research of the second author is supported by grants RFBR 10-01-00678, NSh 8462.2010.1.}

\author{Sergey Natanzon}
\address{Higher School of Economics, Moscow  \phantom{wwwwwwwwwwwwwwwwwwwwwwwwwwww}
Belozersky Institute of Physico-Chemical Biology, Moscow State University,\phantom{wwwww}
Institute Theoretical and Experimental Physics}
\email{natanzons@mail.ru}

%



\title[Moduli via double pants decompositions]{Moduli via double pants decompositions}

\begin{abstract} 
We consider (local) parametrizations of Teichm\"uller space $\T_{g,n}$ (of genus $g$ hyperbolic surfaces with $n$ boundary components) 
by lengths of $6g-6+3n$ geodesics. We find a large family of suitable sets of  $6g-6+3n$ geodesics, each set forming a special 
structure called ``admissible double pants decomposition''. For admissible double pants decompositions containing no double curves
 we show that the lengths of 
curves contained in the decomposition determine the point of $\T_{g,n}$ up to finitely many choices. Moreover, these lengths provide
a local coordinate in a neighborhood of all points of $\T_{g,n}\setminus X$ where $X$ is a union of $3g-3+n$ hypersurfaces. 
Furthermore, there exists a groupoid acting transitively on admissible double pants decompositions and generated by  
transformations exchanging only one curve of the decomposition. The local charts arising from different double pants decompositions 
compose an atlas  covering the Teichm\"uller space. The gluings of the adjacent charts are coming from the elementary transformations
 of the decompositions, the gluing functions are algebraic.
The same charts provide an atlas for a large part of the boundary strata in Deligne-Mumford compactification of the moduli space
$\M_{g,n}$.
%

\end{abstract}

\maketitle

\setcounter{tocdepth}{1}
\tableofcontents

\section*{Introduction}

Consider a hyperbolic structure on a closed oriented surface $S_{g,n}$, $2g+n>2$, of genus $g$ with $n$ boundary components. 
In~\cite{FK}, Fricke and Klein proved that in case $n=0$ the Teichm\"uller space   $\T=\T_{g,n}$ for such a surface
 is homeomorphic to $(6g-6)$-dimensional Euclidean space. Moreover, they specified a point of Teichm\"uller space
by the lengths of closed geodesics contained in some (rather large) set. 

After Fricke and Klein many authors investigated various sets of global parameters on the Teichm\"uller space.
Fenchel and Nielsen~\cite{Fen} introduced ``length-twists'' coordinates which in case of closed surface 
consist of $3g-3$ lengths of mutually 
non-intersecting geodesics and $3g-3$ twist parameters along them. Natanzon~\cite{N} described a convenient set of parameters (including 
both lengths of geodesics and parameters of other nature), 
allowing to recover the Fuchsian group of the surface.
A lot of efforts were spent on descriptions of purely length global parameters, especially, 
for the question of minimal possible number of geodesics whose lengths are sufficient to serve as a global coordinate
on the Teichm\"uller space. First, it was  shown that $9g-9$ length of geodesics may serve as global parameters in $\T_{g,0}$. 
Later,  Wolpert~\cite{W1} used the construction of Fricke and Klein to show that $6g-6$ lengths are sufficient for a local coordinate
in $\T_{g,0}$ (but not for a global one).
It was natural to expect that $6g-6$ lengths of geodesics can serve as a global coordinate on $\T_{g,0}$, however,
Wolpert~\cite{W2} showed that $\T_{g,0}$ can not be parametrized globally by lengths of $6g-6$ geodesics.
%
Sepp\"al\"a and Sorvali~\cite{SS3} presented a global parameterization of $\T_{g,0}$ 
by $6g-4$ length functions (as a by-product they also gave an example of $6g-6$ length parameters defining the surface up to at most 
4 possibilities). 
Finally, in~\cite{S}  Schmutz obtained a global parameterization by $6g-5$ lengths of  geodesics, which is due to~\cite{W2} is 
minimal possible. Another example of such a minimal parameterization is given in~\cite{H1} by Hamenst\"adt. 
In the case of surfaces with cusps or holes the situation is easier: the $(6g-6+2m+3n)$-dimensional Teichm\"uller space
of surfaces with $m$ cusps and $n$ holes may be globally parametrized  by  $(6g-6+2m+3n)$ length parameters 
(see ~\cite{SS3},~\cite{S} and~\cite{H1}).
Hamenst\"adt~\cite{H2} also showed that such a parametrization may be extended to the Thurston boundary of $\T$.

\medskip

In this paper, we consider the Teichm\"uller space $\T=\T_{g,n}$ of marked hyperbolic structures on an oriented surface $S=S_{g,n}$,
$2g+n>2$ of genus $g$ with $n$ geodesic boundary components. The dimension of this space is $6g-6+3n$, so we are interested in
sets of $6g-6+3n$ curves on $S$ whose lengths parametrize $\T$. We build a large family of the sets of $6g-6+3n$ curves such that
the lengths of curves from each set determine a point of $\T$ up to finitely many possibilities and provide a local coordinate
in neighborhoods of most points of $\T$, the local charts of this type compose an atlas on $\T$, 
the transition functions between the charts are algebraic.
Moreover, the same atlas works  for regular points of the moduli the space 
$\M=\T/Mod$ (where $Mod$ is a modular group) and covers also a large part 
of the Deligne-Mumford compactification of $\M$.

In more details, we build a large family of the sets of $6g-6+3n$ curves
on $S$ satisfying the following properties:
\begin{itemize}
\item[{\bf 1.}] (Parametrizing property). The lengths of the curves of each set determine a point of $\T$ up to finitely many choices;
they provide a local coordinate in the neighborhoods of almost all points of $\T$.

\item[{\bf 2.}] (Double pants decomposition property). Each set compose an {\it admissible double pants decomposition} defined and
studied recently in~\cite{FN}; it consists of two pants decompositions (where a {\it pants decomposition} is a set of curves decomposing
the surface into ``pairs of pants'', i.e. into spheres with 3 holes). Each pants decomposition defines a handlebody with $S$ as the 
boundary, so, two pants decompositions define a Heegaard splitting of some 3-manifold $M^3$. 
The {\it admissible} double pants decompositions are ones corresponding to Heegaard splittings of the 3-sphere
(there exists also an equivalent combinatorial definition which is used throughout the proofs).

\item[{\bf 3.}] (Groupoid action). There exists a groupoid acting on admissible double pants decompositions transitively and generated
by simple transformations of two types (called ``flips'' and ``handle-twists''), each of the generating transformations changes 
exactly one curve of a double pants decomposition. The length of the new curve is an algebraic function of the lengths of the initial 
curves.

\item[{\bf 4.}] (Atlas on $\T$ with algebraic transition functions). The charts arising from admissible double pants decompositions  
compose an atlas on $\T$; the transition functions between the charts are algebraic.

\item[{\bf 5.}] (Extension to most strata of Deligne-Mumford compactification). Let $Mod$ be a modular group of $S$ and let $\M=\T/Mod$
be the corresponding moduli space.
Each point of the Deligne-Mumford compactification $\overline \M$ of $\M$ is a boundary point for some chart coming from a double 
pants decomposition.
Moreover, for most points of $\overline \M$  
(including almost all points of the strata of minimal codimension)
there exists a chart coming from a double pants decomposition and covering a neighborhood of the point in the corresponding stratum
as well as covering almost all point in the neighborhood of the point in $\overline \M$.

\end{itemize}

More precisely, let $DP$ be an admissible double pants decomposition whose curves are closed geodesics in $S$. 
In principle, two pants decompositions contained in $DP$ may have a common curve (called a {\it double curve}),
we will be interested in double pants decompositions containing no double curves.
Let $l(DP)$ be the ordered set of lengths of curves composing $DP$. Then we prove the following:

\medskip
\noindent
{\bf Theorem A.} (see Theorem~\ref{local} below).
Let $DP$ be an admissible double pants decomposition without double curves.
Then $DP$ together with the ordered set of lengths $l(DP)=\{l(c_i)| c_i\in DP \}$ is a local coordinate in $\T\setminus Z$
where $Z$ is a union of finitely many codimension 1 subsurfaces in $\T$ (each homeomorphic to a codimension 1 disk).

\medskip

Moreover, we also prove the following result.

\medskip
\noindent
{\bf Theorem B.} (see Theorem~\ref{finite} below).
Let $DP$ be an admissible double pants decomposition containing no double curves. 
Then $l(DP)$ determines a point of $\T$ up to finitely many choices.

\medskip

Composing Theorems~A and~B with the fact (see ~\cite{FN}) that there exists a groupoid acting on admissible double pants decompositions
transitively, we derive the following theorem.

\medskip
\noindent
{\bf Theorem C.} (see Theorem~\ref{trans on charts} below).
(1) The charts with coordinates $l(DP)$, where $DP$ is an admissible double pants decomposition without double curves,
provide an atlas on Teichm\"uller space $\T$.

(2) The elementary transition functions of these charts are induced by  elementary transformations of double pants decompositions,
each elementary transition function change only one coordinate. 
This unique non-trivial transition function is algebraic.

(3) The compositions of elementary transition functions act transitively on the charts.

\medskip

The structure of double pants decomposition is convenient to work with Deligne-Mumford compactification of the moduli space. 
Let $C$ be a set of mutually disjoint simple curves on $S$.
Contracting the curves contained in $C$ we obtain a point of the compactification, on the other hand, we stay in any chart arising
from a double pants decomposition $DP$ such that $C\in DP$ (more precisely, the limit point belongs to the boundary of the chart),
see Theorem~\ref{closure} and Corollary~\ref{cor}.

Furthermore, contraction of the curves of $C$ turns a conveniently chosen double pants decomposition $DP$
into a double pants decomposition of the obtained surface with nodal singularities (provided that $C\in DP$ and each curve of $C$
is intersected by a unique other curve of $DP$). There are some cases when such a convenient decomposition does not exist,
however, for the most configuration of curves $C$ we show that it does exist. In this case we say that the set $C$ is {\it good}
and the stratum $\S_C\in \overline \M$ is {\it good} (here $S_C$ is the set of nodal surfaces obtained by shrinking all curves of $C$,
$\M$ is the moduli space and $\overline \M$ is its Deligne-Mumford compactification).
In particular, all strata of minimal codimension (i.e. of codimension 2) are good strata.
For a good set of curves $C$ we define another length-type coordinates as  
$ \tilde l(DP,C)=\{l(c_i), \frac{1}{l(c_j)} \ | \ c_i\in C, c_j\in  DP\setminus C  \}$. 
We show that the functions $\tilde l(DP,C)$ produce {\it almost charts} covering the good strata of $\overline \M$,
i.e. given a point $\tau'\in \S_C$ in a good stratum $\S_C$ there exists an admissible double pants decomposition $DP$ 
and a neighborhood $O(\tau')\subset \overline \M$ in a natural topology such that
  $\tilde l(DP,C)$ produce a local coordinate in $O(\tau')\cap \S_C$ and give a local coordinate
in some set $O(\tau')\setminus Z \in \overline \M$, where $Z$ is a union of finitely many codimension 1 subsurfaces in $\M$.
More precisely, we prove the following theorem.

\medskip
\noindent
{\bf Theorem D.} (see Theorem~\ref{atlas on M} below).
Let $S$ be a nodal surface, let $\M(S)$ be its moduli space and let $\overline \M(S)$ be Deligne-Mumford compactification of $\M$.
Let $\S_{good}^{\M}=\S_{good}/Mod$ be the union of good strata in $\M$.    
Let $O$ be a locus of orbifold points of $\M$, let $\overline O$ be the closure of $O$ in $\overline \M$.
Then 

\begin{itemize}
\item[(1)] the charts with coordinates $\tilde l(DP,C)$  provide an atlas on $\M\setminus O$ and on $\S_{good}^\M\setminus \overline O$, 
(here  $C$ is a good set and $DP$ is an admissible double pants decomposition without double curves);

\item[(2)] each point $\tau'\in S_{good}^\M\setminus \overline O$ is covered by some almost chart  $(O'(\tau'),\tilde l(DP,C))$;

\item[(3)] the elementary transition functions of these charts (almost charts) 
change only one coordinate,
this unique non-trivial transition function is algebraic;

\item[(4)] the compositions of elementary transition functions act transitively on the union of charts and almost charts.

\end{itemize}

\medskip

The paper is organized as follows. In Section~\ref{dp}, we recall from~\cite{FN} the definition of double pants decompositions 
and their properties. In Sections~\ref{sec-coord} and~\ref{sec tichnic}, we discuss Fenchel-Nielsen coordinates on $\T$,
and use them to prove some technical lemmas.
In Section~\ref{sec local}, we prove Theorem~A, i.e. we prove that double pants decompositions induce some local charts on $\T$ 
(see Theorem~\ref{local}). Section~\ref{sec finite} is devoted to the proof of Theorem~B (see Theorem~\ref{finite}).
In Section~\ref{atlas}, we collect the above mentioned local charts into an atlas on $\T$, this leads to Theorem~C  
(see Theorem~\ref{trans on charts}). Finally, in Section~\ref{compactification} we consider Deligne-Mumford compactification
 of the moduli space and prove Theorem~D  (see Theorem~\ref{atlas on M}).

\medskip
\noindent
{\bf Acknowledgments.} We are grateful to Antonio Costa, Dan Margalit and Saul Schleimer for helpful discussions. 
The work was partially written during the first author's stay at Max Planck Institute for Mathematics in Bonn. 
We are grateful to the Institute for hospitality, support and a nice working atmosphere.

\section{Preliminaries~I: double pants decompositions}
\label{dp}
In this section we recall from~\cite{FN} the definition of double pants decompositions and their properties.

\subsection{Pants decompositions}

Let $S=S_{g,n}$ be an oriented closed surface of genus $g\ge 0$ with $n$ boundary components.
We assume $2g+n> 2$,  which excludes spheres with less than 3 holes and the torus.
The surface $S$ is fixed throughout the paper.

A {\it curve} $c$ on $S$ is an embedded closed non-contractible non-selfintersecting curve considered up to a homotopy of $S$. 

Given a set of curves we always assume that there are no ``unnecessary intersections'', so that if two curves of this set
intersect each other in $k$ points then there are no homotopy equivalent pair of curves intersecting in less than $k$ points.

For a pair of curves $c_1$ and $c_2$ we denote by $|c_1\cap c_2|$ the number of (geometric) intersections of $c_1$ with $c_2$.

\begin{definition}[{{\it Pants decomposition}}]
A {\it pants decomposition} of $S$ is a set of (non-oriented) mutually disjoint curves 
$P=\{ c_1,\dots,c_{k} \}$  decomposing  $S$ into pairs of pants 
(i.e. into spheres with 3 holes).
In this paper, all boundary curves of $S$ are considered as a part of each pants decomposition of $S$.
\\
\end{definition}

It is easy to see that any pants decomposition of $S_{g,n}$ consists of $3g-3+2n$
 (where $3g-3+n$ curves decompose $S$ and $n$ curves are boundary curves).
Note, that we do allow self-folded pants, two of whose boundary components are identified in $S$.  
A surface which consists of one self-folded pair of pants will be called {\it handle}.

A curve $c\in P$,  is {\it regular} if $c\notin \partial S$ and $c$ is not a self-identified boundary curve of the self-folded 
pair of pants (i.e. if it is not lying inside a handle cut out by a curve $c'\in P$).


\begin{definition}[{{\it Flip}}]
Let $P=\{ c_1,\dots,c_{3g-3+2n}  \}$ be a pants decomposition.
Define a {\it flip  of $P$ in a regular curve} $c_i$  as  
a replacing of $c_i\subset P$ by any curve $c_i'$ satisfying the following properties:
\begin{itemize}
\item $c_i'$ does not coincide with any of $c_1,\dots,c_{3g-3+2n}$;
\item $|c_i'\cap c_i|=2$;
\item $c_i'\cap c_j=\emptyset$ for all $j\ne i$.

\end{itemize}

\end{definition}

See Fig.~\ref{fig-flip} for an example of a flip. Clearly, an inverse operation to a flip is also a flip (so that the set of flips
compose a groupoid acting on pants decompositions).

\begin{figure}[!h]
\begin{center}
\psfrag{c1}{\scriptsize $c'$}
\psfrag{c}{\scriptsize $c$}
\epsfig{file=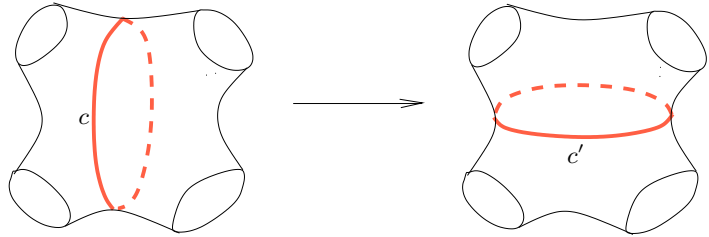,width=0.6\linewidth}
\caption{Flips of pants decomposition.} 
\label{fig-flip}
\end{center}
\end{figure}

\begin{definition}[{{\it Standard decomposition}}]
A decomposition $P$ of $S_{g,n}$ is {\it standard} if $P$ contains $g$ curves $c_1,\dots,c_g$ such that $c_i$, $i=1,\dots,n$, 
cuts out a handle. 

\end{definition}

\subsection{Double pants decompositions }

Let $P=\{ c_1,\dots,c_{3g-3+2n}\}$ be a pants decomposition. A {\it Lagrangian plane}  
$\L(P)\subset H_1(S,\Z)$ is a subspace spanned by the homology classes $h(c_i)$, $i=1,\dots,3g-3+2n$ 
(here $c_i$ is taken with any orientation).


Two  Lagrangian planes $\L(P_1)$ and $\L(P_2)$  are {\it in general position} if 
 $\L_1\cap \L_2=0$ and $H_1(S,\Z)=\langle \L_1,\L_2\rangle$ (where $\langle \L_1,\L_2\rangle$ denotes the sublattice of
$H_1(S,\Z)$ spanned by $\L_1$ and $\L_2$).


\begin{definition}[{{\it Double pants decomposition}}]
A {\it double pants decomposition} $DP=(P_a,P_b)$ is a pair of pants decompositions $P_a$ and $P_b$ of the same surface
such that the Lagrangian planes $\L_a=\L(P_a)$ and  $\L_b=\L(P_b)$ spanned by these pants decompositions
are in general position. $P_a$ and $P_b$ are called {\it parts} of $DP$.

\end{definition}

See Fig.~\ref{double_pant} for an example of a double pants decomposition.

\begin{figure}[!h]
\begin{center}
\psfrag{P}{$P_a$}
\psfrag{Z}{$P_b$}
\epsfig{file=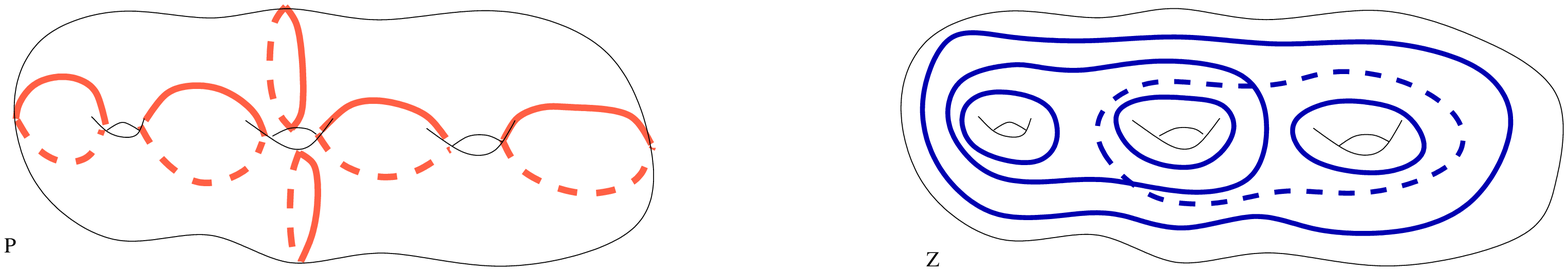,width=0.98\linewidth}
\caption{A double pants decomposition $(P_a,P_b)$. } 
\label{double_pant}
\end{center}
\end{figure}

There are several natural transformations on the set of double pants decompositions:
\begin{itemize}
\item flips of $P_a$;
\item flips of $P_b$;
\item handle-twists (see Definition~\ref{Dehn} below). 

\end{itemize}

\begin{definition}[{{\it Handle-twists}}]
\label{Dehn}
Given a double pants decomposition $DP=(P_a,P_b)$
we define an additional transformation which may be performed if both parts $P_a$ and $P_b$ contain the same  curve $a_i=b_i$  
separating the same handle $\h$, 
see Fig.~\ref{d-self-pant}(a). Let $a\in \h$ and $b\in \h$ be the only curves from $P_a$ and $P_b$ respectively.
Then a {\it handle-twist} $t_a(b)$ (respectively, $t_b(a)$) is a Dehn twist along $a$  (respectively, $b$) in any of two directions
(see Fig.~\ref{d-self-pant}(b)).  

\end{definition}

\begin{figure}[!h]
\begin{center}
\psfrag{a}{\scriptsize $a$}
\psfrag{b}{\scriptsize $b$}
\psfrag{a1}{\scriptsize $a'$}
\psfrag{b1}{\scriptsize $b'$}
\psfrag{a2}{\scriptsize $a'$}
\psfrag{b2}{\scriptsize $b'$}
\psfrag{aa}{\small (a)}
\psfrag{bb}{\small (b)}
\psfrag{cc}{\small (c)}
\psfrag{i}{\scriptsize $a_i=b_i$}
\epsfig{file=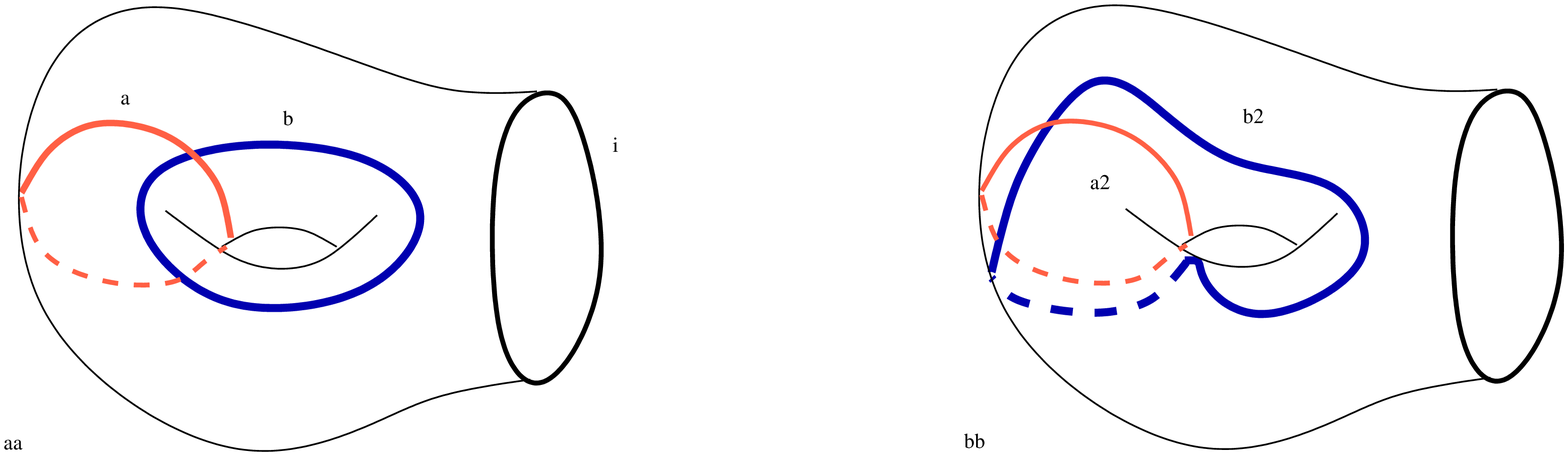,width=0.59\linewidth}
\caption{Handle-twists:
(a) Double self-folded pair of pants; (b) The same pair of pants after a handle-twist $t_a(b)$} 
\label{d-self-pant}
\end{center}
\end{figure}

Notice that both flips and handle-twists are reversible transformations, so that flips and handle-twists generate a groupoid acting on
the set of double pants decompositions.

%

\begin{definition}[{{\it Double curve}}]
A curve $c\in (P_a,P_b)$ is {\it double} if $c\in (P_a\cap P_b)$ and $c\notin \partial S$. 

\end{definition}

\begin{definition}[{{\it Standard decomposition}}]
A double pants decomposition $(P_a,P_b)$ of $S_{g,n}$ is {\it standard} if 
there exist $g$ double curves $c_1,\dots,c_g \in (P_a,P_b)$ such that
 $c_i$ cuts out of $S$ a handle $\h_i$. 

A standard double pants decomposition $(P_a,P_b)$ is {\it strictly standard} if $(P_a,P_b)$ contains $2g-3+n$ double curves
(i.e. $c\in \{P_a\cup P_b\}\setminus \{P_a\cap P_b\}$ if and only if $c$ is contained inside some handle).

\end{definition}

See Fig.~\ref{standard double_pant} for an example of a standard double pants decomposition
(this decomposition may be turned into a strictly standard one in one flip).

\begin{figure}[!h]
\begin{center}
\psfrag{a}{$P_a$}
\psfrag{b}{$P_b$}
\epsfig{file=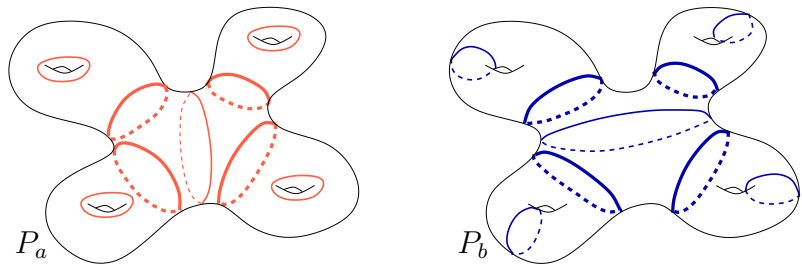,width=0.68\linewidth}
\caption{A standard double pants decomposition $(P_a,P_b)$.} 
\label{standard double_pant}
\end{center}
\end{figure}

\begin{definition}[{{\it Admissible decomposition}}]
A double pants decomposition $(P_a,P_b)$ is {\it admissible}  if it is possible to transform $(P_a,P_b)$ to a standard pants 
decomposition by a sequence of flips.

\end{definition} 

For example, the decomposition shown in Fig.~\ref{double_pant} is admissible.

The following theorem is the main result of~\cite{FN}.

\begin{theorem}[\cite{FN}]
\label{trans} 
A groupoid generated by flips and handle-twists acts transitively on admissible double pants decompositions of $S=S_{g,n}$ 
(for any $(g,n)$ such that $2g+n>2$).

\end{theorem}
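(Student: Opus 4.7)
The plan is to reduce the problem to the transitivity on the subclass of standard double pants decompositions, and then to decompose a standard decomposition into three pieces that can each be handled by a known or easy groupoid action. By the definition of admissibility, any admissible $DP$ is connected by flips to some standard $DP^{\mathrm{std}}$. Hence, given two admissible decompositions $DP_1,DP_2$, if I can exhibit a sequence of flips and handle-twists connecting $DP_1^{\mathrm{std}}$ to $DP_2^{\mathrm{std}}$, concatenation with the two reduction sequences finishes the proof. So the real content is transitivity on standard decompositions.

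Fix a standard $(P_a,P_b)$ with handle-cutting double curves $c_1,\dots,c_g$. Cutting $S$ along the $c_i$ produces a planar surface $\Sigma$ with $g+n$ holes together with $g$ handles $\h_1,\dots,\h_g$. The data of $(P_a,P_b)$ then splits into (i) the system of handle-cutting curves, (ii) for each $\h_i$ a pair of transverse curves $a_i\in P_a$ and $b_i\in P_b$ filling the handle (forced to intersect by the Lagrangian general position condition), and (iii) a pair $(\bar P_a,\bar P_b)$ of pants decompositions of $\Sigma$. First I would handle the external piece (iii): since $\Sigma$ is planar, classical results (e.g., the Hatcher–Thurston-type connectedness of the pants decomposition complex of a planar surface) imply that any two pants decompositions of $\Sigma$ are joined by flips, and these flips manifestly extend to flips of $(P_a,P_b)$ without disturbing (i) or (ii). Next I would handle piece (ii) inside each handle: any two admissible configurations $(a_i,b_i)$ of a torus with one hole are related by Dehn twists along the core curves, which is exactly what handle-twists $t_{a_i}(b_i)$ and $t_{b_i}(a_i)$ provide.

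The remaining step, and the genuine obstacle, is to show that any two handle systems $\{c_1,\dots,c_g\}$ and $\{c_1',\dots,c_g'\}$ of double curves can be connected by flips while maintaining admissibility at every intermediate stage. I would attack this by induction on $g$ and on a suitable complexity (for example, $\sum |c_i\cap c_j'|$). For the base case I would show directly how to move one handle: in a genus-two surface, or in a tubular neighborhood of two handles in the general surface, demonstrate a finite explicit sequence of flips (with handle-twists as needed) that swaps one handle-cutting curve for a prescribed new one while keeping the Lagrangian spans transverse. For the inductive step I would isolate a subsurface containing the discrepancy between the two handle systems, apply the base case inside it, and use step (iii) above to repair the external pants structure, propagating the modification outward.

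The crux, and the hardest point to make rigorous, is controlling admissibility during the handle-replacement sequence: a single flip can in principle destroy the general-position condition on the Lagrangian planes. I would therefore need a preservation lemma asserting that the explicit local moves I use only act on $H_1$ by symplectic transformations that keep $\L_a$ and $\L_b$ transverse, or equivalently, that they factor through compositions of flips each of which starts and ends at an admissible configuration. This local symplectic analysis, combined with the three-piece decomposition above and Theorem's hypothesis $2g+n>2$ (needed to guarantee enough room for the inductive isolation), would assemble into a complete proof.
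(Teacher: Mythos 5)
The paper does not prove this statement: Theorem~\ref{trans} is imported verbatim from the authors' earlier paper~\cite{FN}, which is devoted precisely to establishing it, so there is no internal proof here to compare your attempt against. Your proposal should therefore be judged on its own terms, and on those terms it is an honest sketch with a clearly identified gap that is not filled. The outer scaffolding is sound: reducing to standard double pants decompositions via the very definition of admissibility is correct, the trichotomy (handle-cutting curves, the transverse pair inside each handle, the planar remainder) is a natural way to organize the data, and your observations that Lagrangian general position forces $|[a_i]\cdot[b_i]|=1$ in each handle and that handle-twists generate the necessary $SL_2(\Z)$-type action inside a single handle are both right. One simplification you are missing: because admissibility is \emph{defined} as flip-equivalence to a standard decomposition, it is automatically preserved by any flip that lands in a bona fide double pants decomposition. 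So your ``preservation lemma'' reduces entirely to preserving the general-position condition on $\L_a,\L_b$; you do not need a separate admissibility argument, and phrasing it as ``controlling admissibility'' overstates the problem.

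The genuine gap is exactly where you locate it, namely step~(i): connecting two handle systems $\{c_1,\dots,c_g\}$ and $\{c_1',\dots,c_g'\}$ by flips. What you offer there is a plan (``induction on $g$ and on $\sum|c_i\cap c_j'|$,'' ``demonstrate a finite explicit sequence,'' ``this local symplectic analysis \dots\ would assemble into a complete proof'') rather than an argument, and this is the entire mathematical content of the theorem --- the assertion that admissible double pants decompositions, equivalently Heegaard diagrams of $\SS^3$, form a single orbit is essentially a Reidemeister--Singer/Waldhausen-type statement and is nontrivial. In particular the base case is not as local as you suggest: replacing a single handle-cutting curve $c_i$ by a new one $c_i'$ that lies in a different isotopy class but still separates a handle generically requires flips that pass through decompositions with no such separating curve at all, so the inductive isolation into a tubular neighborhood of two handles will not, as stated, keep you inside the subclass of standard decompositions. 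You would need either a complexity argument on intersections with a reference system together with a surgery lemma producing a decreasing move, or an appeal to the topological characterization (admissible $\iff$ $M^3(DP)=\SS^3$) plus a stabilization/destabilization argument translated into flips and handle-twists. As written, the proposal does not constitute a proof of the theorem; it is a reasonable outline with the hard core left open.
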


\begin{remark}[Admissible double pants decompositions and Heegaard splitting of $\SS^3$]
A set of admissible double pants decompositions have an invariant topological description in terms of 
Heegaard splittings of 3-manifolds.
For each pants decomposition $P$ of $S$ one may construct a handlebody $S_+$ such that $S$ is 
the boundary of $S_+$  and all curves of $P$ are contractible inside $S_+$. A union of two pants 
decompositions of the same surface define two different handlebodies bounded by $S$. Attaching this
handlebodies along $S$  one obtains a Heegaard splitting of some 3-manifold $M^3(DP)$.
It is shown in~\cite{FN} that  a pants decomposition $DP$ is admissible if and only if $M^3(DP)=\SS^3$,
where $\SS^3$ is a 3-sphere.

\end{remark}

We will also use the following result proved in~\cite[Lemma~6.1]{FN}.

\begin{prop}[\cite{FN}]
\label{ideal}
Let $S=S_{g,n}$, $2g+n>2$, and $Mod(S)$ be its modular group.   
Let $(P_a,P_b)$ be an admissible double pants decomposition without double curves.
Then  $\gamma\in Mod(S)$ fixes $(P_a,P_b)$ if and only if $\gamma=id$. 

\end{prop}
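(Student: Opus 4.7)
The plan is to prove the proposition in two stages. First, reduce to the case where $\gamma$ fixes every individual curve of $DP$ (not merely setwise). Second, use the no-double-curve condition together with the classical structure of mapping classes fixing a pants decomposition to conclude $\gamma=\mathrm{id}$.

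Let $H\subset \mathrm{Mod}(S)$ denote the stabilizer of $(P_a,P_b)$ and $H_0\subset H$ the subgroup fixing every curve of $DP$ individually. An element of $H$ induces permutations of the finitely many curves of $P_a$ and of $P_b$, so $H/H_0$ is finite. For the clean part of the argument, killing $H_0$: a classical theorem says that any $\gamma\in H_0$ can be written as a product of commuting Dehn twists along curves of $P_a$, namely $\gamma=\prod_{c_i\in P_a}T_{c_i}^{n_i}$. Since $\gamma$ also fixes every $c_j\in P_b$ individually, and since the no-double-curve hypothesis forces every interior $c_i\in P_a$ to intersect some $c_j\in P_b$ (otherwise $c_i$, being disjoint from the pants decomposition $P_b$, would be isotopic to a curve of $P_b$ and hence double), one applies the standard fact that $\prod T_{c_i}^{n_i}$ fixes an isotopy class $d$ only if $n_i=0$ for every $c_i$ with $|c_i\cap d|>0$ (twists along disjoint curves act independently on the arcs of $d$ crossing them and cannot cancel). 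This forces every interior $n_i$ to vanish, so only boundary twists remain; these are trivial in $\mathrm{Mod}(S)$ under the convention adopted here (the proposition would be false otherwise, so boundary twists must be quotiented out).

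The first stage --- ruling out non-trivial permutations $\sigma_a$ on $P_a$ and $\sigma_b$ on $P_b$ induced by an element of $H$ --- is the main obstacle. My approach would be to exploit $\gamma_{*}$ on homology: it preserves both Lagrangian planes $\L_a, \L_b$ and the intersection pairing, and since $\L_a\oplus \L_b = H_1(S,\Z)$ the signed permutation action on generators of $\L_a$ determines the action on $\L_b$ by adjointness, producing many constraints in terms of the algebraic intersection matrix $\bigl(\langle c_i,c_j\rangle\bigr)_{c_i\in P_a,\,c_j\in P_b}$. Combining this with the Heegaard splitting interpretation --- admissibility means $(P_a,P_b)$ comes from a Heegaard splitting of $\SS^3$, and the no-double-curve condition means the two systems of compressing disks share no curve --- should force $\sigma_a$ and $\sigma_b$ to be trivial. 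Since $H/H_0$ is finite, one may alternatively invoke Nielsen realization and work inside the isometry group of a hyperbolic structure on $S$ where the DP curves are closed geodesics permuted by a finite isometry group. In either formulation, the genuinely delicate point is the combinatorial/topological rigidity of an admissible DP without double curves as a labelled object on $S$, and this is where I expect the main work to concentrate.
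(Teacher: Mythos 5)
The paper itself gives no proof of this proposition --- it is quoted directly from~\cite{FN} (Lemma~6.1 there) --- so there is no in-paper argument to compare against; I therefore assess the proposal on its own terms. The ``clean'' Stage~2 reduction has a genuine gap: the claim that any mapping class fixing every curve of $P_a$ individually is a product of Dehn twists along curves of $P_a$ is false. The group generated by Dehn twists along $P_a$ is only a finite-index subgroup of your $H_0$, which in general also contains finite-order elements acting nontrivially inside the pairs of pants without moving any curve. The sharpest example is the hyperelliptic involution $\iota$ on a closed genus-two surface: it preserves the isotopy class of \emph{every} simple closed curve, so it lies in $H_0$ for \emph{every} double pants decomposition of $S_{2,0}$, yet it is a nontrivial involution and certainly not a nontrivial product of commuting twists (which would have infinite order). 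Your intersection-pattern argument kills the twist exponents but says nothing about such torsion. Worse, $\iota$ shows that if ``fixes $(P_a,P_b)$'' is read as ``preserves each isotopy class'' the proposition is literally false on $S_{2,0}$; the notion in~\cite{FN} must be stronger (preserving oriented curves, or a regular neighbourhood of $P_a\cup P_b$ up to isotopy), and that refinement --- which your proof never touches --- is exactly what rules out $\iota$.

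This also reveals a more economical line that bypasses both of your stages. Since $P_a$ and $P_b$ are pants decompositions sharing no curve, any curve disjoint from both would lie in a pair of pants of each, hence be isotopic to a curve of $P_a$ and to a curve of $P_b$ simultaneously, i.e.\ be a double curve; so $P_a\cup P_b$ fills $S$. Moreover any two curves within $P_a$ (or within $P_b$) are disjoint, so among any three curves of $P_a\cup P_b$ some pair is disjoint. The collection thus satisfies the hypotheses of the Alexander method, and a mapping class fixing a filling configuration in the appropriate oriented sense is the identity. That argument needs neither the decomposition into twists nor the homological/Heegaard analysis of permutations; in particular the delicate Stage~1, which you correctly flag as the hard part and leave unresolved, is absorbed entirely. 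Note also that admissibility plays no role in this route --- only the absence of double curves is used --- which is consistent with the fact that the cited Lemma~6.1 of~\cite{FN} concerns marking, not the groupoid structure.
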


\section{Preliminaries~II: coordinates on Teichm\"uller space}
\label{sec-coord}

Let $S=S_{g,n}$ be a hyperbolic surface of genus $g$ with $n$ boundary components. Each boundary component is assumed to be a 
geodesic of finite length.

A Teichm\"uller space $\T=\T_{g,n}$ is a parameter space of marked hyperbolic metrics on the surface $S_{g,n}$.
For the marking on $S$ we will usually use  admissible double pants decompositions containing no double curves
(this provides a correct marking since any elements $\gamma\ne e$ of the modular group 
$Mod(S_{g,n})$ acts non-trivially on the decomposition, see~\cite[Lemma~6.1]{FN}). 

We will use Fenchel-Nielsen parameterization of the Teichm\"uller space. 
We shortly explain the parametrization below and refer to~\cite{T} for the details. 

To build the parameterization one chooses a pants decomposition
$P$ of $S$. Each pair of pants is uniquely determined by the lengths of its boundary curves. To encode the concrete hyperbolic structure
one need also to now how the adjacent pairs of pants a sewed together: one can choose an arbitrary way to attach them, and then rotate 
one piece along another by any real angle. More precisely, to determine the angle of the rotation one does the following:
\begin{itemize}
\item[1)] for each pair of pants 
$p^k\in P$ one chooses three disjoint segments $s_{ij}^k$, $i,j\in \{1,2,3\}$ orthogonal to the boundary components $b_{i}^k$ and 
$b_j^k$ of $p^k$ (so that $p^k$ is decomposed into two  right-angled hexagons); 
\item[2)] then one fixes some way to attach the adjacent pairs of pants $p^k$ and $p^{k'}$ so that the segments 
$s_{ij}^k$ and $s_{i'j'}^{k'}$ intersect the curve $p^k\cap p^{k'}$  at the same points, 
this will produce some special gluing of pairs of pants,
all other gluings (with other angles of rotation of $p^k$ with respect to $p^{k'}$) will be compared with this special gluing;
\item[3)] for arbitrary gluing the angles of rotation are compared with the chosen special gluing, 
when the angle is changed by $2\pi$ one obtains the same hyperbolic structure on the surface, but the different point of the 
Teihm\"uller space. 
 
\end{itemize}

So, the Fenchel-Nielsen coordinates on $\T$ build from the pants decomposition $P$
consist of $3g-3+2n$ length parameters $l(c_i)$ (lengths of all the curves $c_i\in P$ 
including the boundary curves of $S$)
and $3g-3+n$ angle parameters $\alpha(c_j)$ (angles along all non-boundary curves $c_j\in P$, $c_j\notin \partial S$). 
We denote $$FN(P)=\{l(c_i),\alpha(c_j) \ | \ c_i\in P;\ \ c_j\in P,  c_j\notin \partial S\}.$$
We will also assume that the Dehn twist along $c_j$ changes $\alpha(c_j)$ by $2\pi$.

The construction establishes the homeomorphism between $\T$ and $\R_{>0}^{3g-3+2n}\times \R^{3g-3+n}$ (where $\R_{>0}$ stays
for positive real numbers).

\begin{remark}
After the Teichm\"uller space $\T$ is introduced using any given pants decomposition $P_0$ (or even using a marking of other type),
one can choose any pants decomposition $P$ to introduce the coordinates $FN(P)$ on the same space $\T$.

\end{remark}

\medskip
\noindent
Our aim is to transform Fenchel-Nielsen coordinates to coordinates containing only length parameters.

\begin{definition}[{{\it Locally parametrizing decomposition}}]
We say that a double pants decomposition $DP$ is {\it locally parametrizing} at the point $\tau\in \T$ if the functions
$l(DP)=\{l(c)\ | \ c\in DP \}$ provide a local homeomorphism from  a neighborhood of $\tau$ to a neighborhood of some point in 
$\R^{6g-6+3n}$.
By a {\it chart} $\C(DP)$ we mean a pair $(X,l(DP))$ where $X$ is the 
set of points $\tau\in \T$  such that $DP$ is locally parametrizing at $\tau$.

\end{definition}

Our first aim is  to prove that admissible double pants decompositions are locally parametrizing.
As an intermediate technical step in the proof we will use {\it mixed} coordinates, containing some angle-parameters 
(but less than Fenchel-Nielsen coordinates). 

\begin{definition}[{{\it Mixed coordinates}}]
\label{mixed}
Let $DP=(P_a,P_b)$ be a double pants decomposition, possibly with some double curves.
Let $FN(P_b)$ be some Fenchel-Nielsen coordinates build from $P_b$.
Denote by $mix(DP,FN(P_b))$ the following set of functions: 
$$mix(DP,FN(P_b))=\{l(c),\alpha(c')\ | \ c\in DP, c'\in P_a\cap P_b \},$$
where $\alpha(c')$ is the corresponding angle coordinate in $FN(P_b)$.

\end{definition}

\section{Some properties of length functions}
\label{sec tichnic}
In this section we prove several facts from hyperbolic geometry. In particular, Lemmas~\ref{handle} and~\ref{flip}  
will be crucial for the construction of locally parametrizing double pants decompositions.
Lemmas~\ref{all flips}--\ref{congruent hexagons} are preparatory.
We will denote the hyperbolic plane by $\H^2$.

\begin{lemma}
\label{all flips}
Let $S=S_{0,4}$, let $c,d\in S$  be two closed curves $|d\cap c|=2$. Let $P$ be a pants decomposition of $S$, $c\in P$.
Suppose that $d'\in S$ is a curve obtained from $c$ by a flip of $P$.
Then $d'=t_{c}^{k}(d)$ for some integer $k$, where $t_c$ is a Dehn twist along $c$.

\end{lemma}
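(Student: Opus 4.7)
The plan is to cut $S_{0,4}$ along $c$ and analyze $d$ and $d'$ as pairs of arcs in the resulting pants, then reassemble the pieces and observe that only the gluing data along $c$ can differ.

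First I would observe that on a genus zero surface every simple closed curve separates, so $c$ divides $S_{0,4}$ into two pairs of pants $P_1$ and $P_2$. Each $P_i$ is bounded by $c$ together with two of the four boundary components of $S$. Because $|d\cap c|=2$ (and this intersection number is minimal), $d\cap P_i$ is a single properly embedded essential arc $a_i$ with both endpoints on $c$; likewise $d'\cap P_i=a'_i$. The curves $d,d'$ being nontrivial in $S$ forces each $a_i,a'_i$ to be essential (not boundary-parallel into $c$), since otherwise one could homotope $d$ or $d'$ to reduce its intersection with $c$.

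Next I would prove a small planar lemma: in a pair of pants with distinguished boundary component $c$, every essential properly embedded arc with both endpoints on $c$ is isotopic, via an isotopy of $P_i$ letting endpoints slide on $c$, to the unique arc $\alpha_i$ whose two complementary regions each contain exactly one of the two non-$c$ boundary components of $P_i$. The argument is elementary: such an arc separates the planar surface $P_i$ into two pieces; essentiality forces each piece to contain at least one non-$c$ boundary component, and there are only two such components to distribute. Applying this lemma, both $a_i$ and $a'_i$ are isotopic to $\alpha_i$ in $P_i$, and hence $d,d'$ are obtained from the same pair $(\alpha_1,\alpha_2)$ of pants arcs by different gluings of their endpoints along $c$.

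Finally I would show that any two simple closed curves obtained by such a regluing differ by an integer power of $t_c$. Choose an annular neighborhood $A$ of $c$ in $S$ and parameterize $c$ by $\R/\Z$; the arcs $\alpha_1,\alpha_2$ meet $c$ in specified pairs of points, and the matching of these points from the two sides determines the isotopy class of the resulting curve. A Dehn twist $t_c$ shifts the matching on the $P_1$-side by one full rotation of $c$ relative to the $P_2$-side, and the set of admissible matchings producing a simple closed curve forms a single $\Z$-orbit under this action. Hence $d'=t_c^{k}(d)$ for some $k\in\Z$.

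The main technical point is the last step: verifying that the twist coordinate parameterizing possible regluings of the fixed pair of arcs $(\alpha_1,\alpha_2)$ is indeed captured by integer powers of $t_c$, rather than some larger mapping class group action. This I expect to reduce to a direct check in the annular neighborhood $A$ of $c$, where the difference between two gluings is supported and acts as a pure translation.
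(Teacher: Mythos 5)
The paper itself gives no argument here beyond the citation of \cite[Lemma~1.16]{FN}, so I can only compare your proof against the statement. Your reduction is cleanly organized: cutting along $c$, observing that each side of $d$ and of $d'$ is the unique (up to isotopy sliding endpoints on $c$) essential arc $\alpha_i\subset P_i$ with both ends on $c$, and then trying to account for the difference purely by the regluing data along $c$. That much is fine, and you are right to flag the final step as the crux.

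Unfortunately the crux claim is false as you have stated it: the set of simple closed curves obtained by regluing the fixed pair of arcs $(\alpha_1,\alpha_2)$ along $c$ is \emph{not} a single $\langle t_c\rangle$-orbit; it splits into exactly two. Concretely, a curve $d$ with $|d\cap c|=2$ separates the four boundary components of $S_{0,4}$ into two pairs, and since $c$ also pairs them up, the two pairings are transverse, so there are exactly two possible ``partition types'' for $d$. A Dehn twist acts on $H_1(S;\Z/2)$ by $x\mapsto x+\langle x,c\rangle c$, and here $\langle d,c\rangle\equiv i(d,c)=2\equiv 0\pmod 2$, so $t_c$ fixes $[d]\in H_1(S;\Z/2)$ and hence cannot change the partition type; thus no power of $t_c$ carries a curve of one type to the other. (Equivalently, in the slope picture for $S_{0,4}$, curves with $i(\cdot,c)=2$ are the Farey neighbours $1/n$ of $c=0/1$, and $t_c$ acts by $n\mapsto n\pm 2$, giving two orbits by parity.) The error in your informal ``translation in the annulus'' picture is that the configuration space of unordered pairs of points on the core circle has fundamental group generated by the \emph{half}-rotation, while $t_c$ realizes only the full rotation. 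Note also that the paper's own Lemma~\ref{flip} and Lemma~\ref{step-flip} quietly work with curves ``of the same homology class'' when applying Dehn twists along $c$, which is precisely the hypothesis needed to collapse the two orbits to one; so your proof (and, as literally stated, Lemma~\ref{all flips}) is missing the condition that $d$ and $d'$ induce the same partition of the four boundary components, i.e. agree in $H_1(S;\Z/2)$. With that hypothesis added, the remainder of your argument goes through.
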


The lemma follows immediately from~\cite[Lemma~1.16]{FN}.

\begin{lemma}
\label{glide}
Let $p\in \H^2$ be a line separating points $O$ and $O'$. Given the distances from $p$ to $O$ and $O'$, the distance $OO'$ is a monotonic
 function on the distance $PP'$, where $P$ and $P'$ are the orthogonal projections of points $O$ and $O'$ to $p$.  

\end{lemma}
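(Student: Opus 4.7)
The plan is to derive the explicit distance formula
$$\cosh(OO') \;=\; \cosh(OP)\,\cosh(O'P')\,\cosh(PP') \;+\; \sinh(OP)\,\sinh(O'P'),$$
valid whenever $O$ and $O'$ lie on opposite sides of a geodesic $p$ with orthogonal projections $P$, $P'$ onto $p$. Granted this identity, the lemma is immediate: with $OP$ and $O'P'$ held fixed, the right-hand side is a strictly increasing function of $PP'\ge 0$, because the coefficient $\cosh(OP)\cosh(O'P')$ is positive and $\cosh$ is strictly increasing on $[0,\infty)$; since $\cosh$ is also strictly increasing on the range of $OO'$, strict monotonicity of $OO'$ in $PP'$ follows. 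A quick sanity check is that at $PP'=0$ the formula collapses to $\cosh(OO')=\cosh(OP+O'P')$, as it should.

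To derive the formula I would work in the hyperboloid model, realizing $\H^2$ as $\{u\in\R^{2,1}\colon \langle u,u\rangle=-1,\ u_0>0\}$ with $\cosh d(u,v)=-\langle u,v\rangle$ for the Lorentzian inner product of signature $(-,+,+)$. Parametrize $p$ by arclength as $\gamma(s)=(\cosh s,\sinh s,0)$; then the point at signed distance $h$ from $p$ with foot $\gamma(s)$ is $(\cosh s\cosh h,\ \sinh s\cosh h,\ \sinh h)$. Normalising so that $P=\gamma(0)$ and $P'=\gamma(PP')$, the point $O$ has Fermi coordinates $(0,\,OP)$ and $O'$ has coordinates $(PP',\,-O'P')$, the minus sign recording that $O$ and $O'$ lie on opposite sides of $p$. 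A direct evaluation of $-\langle O,O'\rangle$ then yields the displayed formula.

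There is no real obstacle: the whole lemma reduces to a single standard identity of hyperbolic trigonometry. An alternative derivation, if one prefers to avoid the hyperboloid model, is to let $Q:=OO'\cap p$ and apply the hyperbolic Pythagorean theorem $\cosh(OQ)=\cosh(OP)\cosh(PQ)$ and $\cosh(O'Q)=\cosh(O'P')\cosh(P'Q)$ in the two right triangles on either side of $p$, together with $OO'=OQ+QO'$ and the hyperbolic cosine addition formula; but the hyperboloid calculation is cleaner, as the intermediate point $Q$ need not be located.
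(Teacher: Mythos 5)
Your proof is correct, and it takes a genuinely different route from the paper's. The paper argues synthetically: it fixes $O$ and $P$, lets $P'$ (and hence $O'$) glide along $p$, and tracks the intersection point $X=OO'\cap p$, claiming that $X$ glides away from $P$, so that $OX$ grows (via the right triangle $OPX$), and by a symmetric appeal $O'X$ grows as well, whence $OO'=OX+XO'$ grows. This is short but leaves the key geometric claim — that $X$ moves away from $P$, \emph{and} that $P'X$ also grows — essentially as an assertion to be read off a picture. Your approach instead computes the explicit law
\[
\cosh(OO')=\cosh(OP)\cosh(O'P')\cosh(PP')+\sinh(OP)\sinh(O'P')
\]
in the hyperboloid model (I verified this; note the sanity checks at $PP'=0$ and at $OP=0$ both come out right), from which strict monotonicity of $OO'$ in $PP'$ is immediate since $\cosh(OP)\cosh(O'P')>0$. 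What your version buys is rigor and quantitative content: it avoids having to justify the motion of the auxiliary intersection point $X$, it yields \emph{strict} monotonicity explicitly (which is what the application in Lemma~\ref{flip} actually uses), and the closed formula would also give rates of growth for free. The cost is that you invoke the hyperboloid model and Fermi coordinates, which is slightly heavier machinery than the paper's picture-level argument, but it is completely standard and the whole thing is a three-line computation.
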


\begin{proof}
Suppose that the points $P$ and $O$ are fixed, and the point $P'$ (together with $O')$ glide away from $P$, see Fig.~\ref{fig glide}.b.
Then the point $X=OO'\cap p$ glide away from $P$ which implies that the distance $OX$ grows monotonically when $PP'$ increases.
By the similar reason $O'X$ grows, and hence, $OO'$ grows monotonically.   

\end{proof}

\begin{figure}[!h]
\begin{center}
\psfrag{a}{\small (a)}
\psfrag{b}{\small (b)}
\psfrag{P}{\scriptsize $P$}
\psfrag{P'}{\scriptsize $P'$}
\psfrag{O}{\scriptsize $O$}
\psfrag{O'}{\scriptsize $O'$}
\psfrag{X}{\scriptsize $X$}
\psfrag{p}{\scriptsize $p$}
\epsfig{file=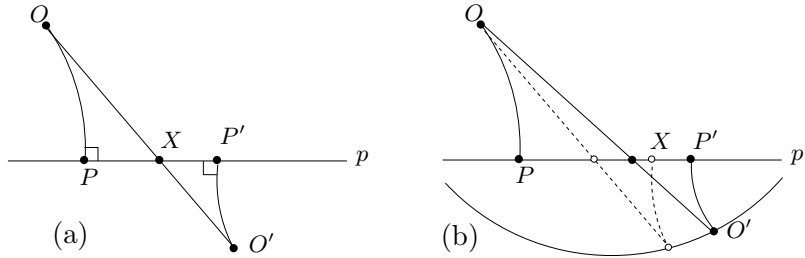,width=0.68\linewidth}
\caption{To the proof of Lemma~\ref{glide}} 
\label{fig glide}
\end{center}
\end{figure}

\begin{lemma}
\label{congruent hexagons}
Let $S=S_{0,3}$ be a three-holed sphere with a boundary $\partial S= c_1\cup c_2\cup c_3$, and let 
$s_{ij}$ be a segment orthogonal to $c_i$ and $c_j$, for $i\ne j$, $i,j\in \{1,2,3\}$. Then the segments $s_{12}, s_{13}, s_{2,3}$ 
decompose $S$ into two congruent right-angled hexagons.

\end{lemma}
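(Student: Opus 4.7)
My plan is to show first that the three orthogonal arcs are pairwise disjoint and cut $S$ into two hexagonal disks, then to identify these disks as right-angled hexagons, and finally to invoke the classical rigidity of right-angled hyperbolic hexagons to conclude they are congruent.

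First I would verify that the segments $s_{12}, s_{13}, s_{23}$ are pairwise disjoint. This is standard: each $s_{ij}$ is the unique shortest geodesic arc perpendicular to both $c_i$ and $c_j$, and if two such arcs (say $s_{12}$ and $s_{13}$) intersected, one could produce a shorter orthogonal arc on one of the pairs by a cut-and-paste argument, contradicting minimality. Alternatively, lift to the universal cover $\H^2$ and observe that the boundary geodesics and the common perpendiculars form a configuration in which the perpendiculars are disjoint. Once disjointness is established, a straightforward topological count (Euler characteristic of $S_{0,3}$ is $-1$; cutting along three disjoint arcs with endpoints on $\partial S$ changes $\chi$ by $+3$, yielding two disks) shows that $S \setminus (s_{12}\cup s_{13}\cup s_{23})$ consists of two disks $H_1$ and $H_2$.

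Next I would describe each disk combinatorially. Each $s_{ij}$ borders both $H_1$ and $H_2$, contributing one side to each. Each boundary circle $c_i$ meets exactly two of the arcs (namely $s_{ij}$ and $s_{ik}$), whose two endpoints on $c_i$ split it into two sub-arcs, one on the boundary of $H_1$ and one on the boundary of $H_2$. Hence each $H_k$ is bounded by six geodesic segments — three coming from $s_{12}, s_{13}, s_{23}$ and three coming from the $c_i$ — alternating in this cyclic order. All six corner angles are right angles by the orthogonality hypothesis on the $s_{ij}$. Therefore $H_1$ and $H_2$ are right-angled hyperbolic hexagons.

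Finally, I would invoke the classical fact that a right-angled hyperbolic hexagon is determined up to isometry by the lengths of any three pairwise non-adjacent sides (this is the standard consequence of the hyperbolic law of cosines for right-angled hexagons; see e.g.\ Thurston's notes). In both $H_1$ and $H_2$, the three non-adjacent sides coming from $s_{12}, s_{13}, s_{23}$ have the same lengths $|s_{12}|, |s_{13}|, |s_{23}|$. Hence $H_1$ and $H_2$ are congruent, proving the lemma. The main subtlety is really only the disjointness of the $s_{ij}$ and the topological identification with two hexagons; the congruence itself is an immediate application of the known rigidity result.
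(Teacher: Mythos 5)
Your proof is correct and follows the same route as the paper: the paper declares the decomposition into two right-angled hexagons to be clear and then concludes congruence from the rigidity of right-angled hexagons determined by three pairwise non-adjacent sides. You simply supply the elementary details (disjointness of the perpendicular arcs and the Euler-characteristic count) that the paper leaves implicit.
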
 

\begin{proof}
It is clear that the segments $s_{ij}$ decompose $S$ into two right-angled hexagons.  
Since a right-angled hexagon is determined (up to an isometry) by the lengths of three non-adjacent sides (the lengths of
 $s_{12}, s_{13}, s_{2,3}$), the hexagons are congruent.

\end{proof}

If the curves $a,b\in S$ are orthogonal to each other we will write ``$a\perp b$''.

\begin{lemma}
\label{handle}
Let $S=S_{1,1}$ be a handle with a boundary curve $c$, let  $a,b\subset S$ be two curves $|a\cap b|=1$.
Then the set of functions $\bar x=(l(a),l(b),l(c))$ is a local coordinate on $\T\setminus X $ where $X=\{\tau\in \T| a\perp b \}$. 
Moreover, $\bar x$  determines the point $\tau\in\T$ up to at most two possibilities.  

\end{lemma}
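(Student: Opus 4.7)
The plan is to use Fenchel--Nielsen coordinates coming from the pants decomposition $P=\{a\}\cup\partial S$, which give a global homeomorphism $(l(a),l(c),\alpha(a)):\T\to\R_{>0}^{2}\times\R$, and to write $l(b)$ explicitly as a function of these. The length $l(b)$ will turn out to obey a hyperbolic Pythagoras-type relation, from which both the local parametrization claim and the finite-preimage claim fall out.

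First I cut $S$ along $a$, obtaining a pair of pants $\Pi$ with boundary $c,a^{+},a^{-}$. By Lemma~\ref{congruent hexagons}, $\Pi$ is determined up to isometry by $l(a)$ and $l(c)$, and it carries a unique perpendicular seam $s$ from $a^{+}$ to $a^{-}$ of some length $d=d(l(a),l(c))$. Lift to the universal cover $\H^{2}$: fix a lift $\tilde a$ of $a$, let $\tilde a'$ be the neighbouring lift of $a$ across a lift of $\Pi$, and let $T_{b}$ be the deck transformation with $T_{b}\tilde a=\tilde a'$. Then the common perpendicular $q$ of $\tilde a$ and $\tilde a'$ (a lift of $s$) has length $d$; write $P=q\cap\tilde a$ and $P'=q\cap\tilde a'$.

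Now decompose $T_{b}=R\cdot G_{t}$, where $R$ is the hyperbolic translation of length $d$ along $q$ (so $R\tilde a=\tilde a'$ and $RP=P'$), and $G_{t}$ is the hyperbolic translation along $\tilde a$ by signed distance $t$. The parameter $t$ is an affine function of the twist $\alpha(a)$ (one Dehn twist shifts $t$ by $l(a)$), so $t$ runs over all of $\R$ as $\alpha(a)$ does. Because the axes of $R$ and $G_{t}$ meet at $P$ at right angle, a direct matrix multiplication in $\mathrm{PSL}_{2}(\R)$ gives $\mathrm{tr}(T_{b})=2\cosh(d/2)\cosh(t/2)$, hence
\[
\cosh\tfrac{l(b)}{2}\;=\;\cosh\tfrac{d}{2}\cdot\cosh\tfrac{t}{2}. \qquad(*)
\]

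The lemma follows from $(*)$. Fix $l(a),l(c)$, hence $d$; the right-hand side is even in $t$, strictly increasing in $|t|$, and attains its unique minimum $\cosh(d/2)$ at $t=0$. So each value $l(b)>d$ comes from exactly two values of $t$, and $l(b)=d$ from one, yielding at most two points of $\T$ with the same triple $(l(a),l(b),l(c))$. Moreover $t=0$ is exactly the condition $T_{b}=R$, i.e.\ the axis of $T_{b}$ is $q$, which projects to $a\perp b$; this is the subset $X$. Away from $\{t=0\}=X$ we have $\partial l(b)/\partial t\neq 0$, so $(*)$ can be solved smoothly for $t$ in terms of $(l(a),l(c),l(b))$, delivering the local coordinate on $\T\setminus X$. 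The main technical point is the trace identity $(*)$ for two hyperbolic translations with perpendicular axes; once this is in hand, monotonicity and the preimage count are immediate.
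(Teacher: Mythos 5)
Your proof is correct and takes essentially the same approach as the paper: cut along $a$, pass to the universal cover, express $l(b)$ in Fenchel--Nielsen coordinates, and read off monotonicity in the twist. The only difference is that you obtain the key formula $\cosh(l(b)/2)=\cosh(d/2)\cosh(t/2)$ via a trace identity for the product of two hyperbolic translations with perpendicular axes, whereas the paper derives the identical relation by measuring distances directly in the hexagonal tiling of $\H^2$; the two computations are equivalent (the paper's $\cosh OA$ is your $\cosh(d/2)$ and its $l(a)\frac{\alpha(a)-\alpha_0}{4\pi}$ is your $t/2$).
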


\begin{proof}
Shortly speaking, the coordinates  $\bar x=(l(a),l(b),l(c))$ are produced from Fenchel-Nielsen coordinates.
 More precisely, we fix Fenchel-Nielsen coordinates $FN(P)=(l(a),\alpha(a),l(c))$ arising from pants decomposition $P=\{a,c\}$.
We fix some values of  $l(a)$ and $l(c)$ and denote by $\alpha_0$ the value of $\alpha(a)$ at the point where $l(a)$ and $l(c)$
have the chosen values and $a$ is orthogonal to $b$.
We will show that  $l(b)$ 
is a monotonic function on the absolute value $|\alpha(a)-\alpha_0|$, which will imply all statements of the lemma.
Below we explain this in more details.   

First, we cut $S$ along $a$ and obtain a pair of pants $S'$ with three boundary components $c$, $a$ and $a'$.
For each of the three pairs of boundary components of $S'$
we draw a segment orthogonal to both of these two components. Denote these segments by $s_{c,a}$,  $s_{c,a'}$,  $s_{a,a'}$,
see Fig.~\ref{ruchka}.a. 
 The three segments decompose $S'$ into two right-angled hexagons $H_1$ and $H_2$. Similarly, together with the curve $a$
the three segments decompose the initial handle $S$ into two hexagons.

Consider the covering of $S$ by hyperbolic plane. We are interested in the tiling of the plane by the images of  $H_1$ and $H_2$. 
Notice that the  copies of $H_1$ and $H_2$ adjacent along the image of $s_{a,a'}$ (or $s_{c,a}$ or $s_{c,a'}$)  have this side in 
common, while the gluing along the images of $a$ and $a'$ depends on the angle parameter $\alpha(a)\in FN(P)$. 
More precisely, when $\alpha(a)= \alpha_0$  the adjacent along $a$
hexagons have a common side, otherwise the hexagons are shifted one along another  
as  in Fig.~\ref{ruchka}.b. With growth of $\alpha(a)$ the hexagons in one row glide monotonically along the hexagons of the 
other row. We denote by $p$ and $p'$ the lines separating the rows. 

Now, consider the curve $b\in S$, $|b\cap a|=1$. First, suppose that $b\perp a$, i.e. the image $\hat b$ of $b$ in the hyperbolic plane 
coincide with the image $AA'$ of $s_{a,a'}$. 
Now, we increase $\alpha(a)$ and look at the image $\hat b\in \H^2$ of $b$: 
since $b$ is a closed geodesic on $S$, $\hat b$ is a line forming the same angles with $p$ and $p'$.
This implies that $\hat b$ passes through the midpoint $O$ of $AA'$. Hence, $AY=A'Y'$,
where $Y=\hat b\cap p$ and $Y'=\hat b\cap p'$.
Furthermore, the hexagon $H_2'$ is shifted with respect to the hexagon $H_2$ to the distance 
$\rho=l(a) \frac{(\alpha(a)-\alpha_0)}{2\pi}$. 
Denote by $T$ the vertex of $H_2'$ projecting to the same point of $S$ as $A'$ (as  in Fig.~\ref{ruchka}.b), then
$TY=AY=A'Y'$. Hence,
$AY=1/2\rho = l(a)  \frac{(\alpha(a)-\alpha_0)}{4\pi}$.  
The same formula holds for any positive  value of $(\alpha(a)-\alpha_0)$ as well as for any negative one (in the latter case 
the point $Y\in l$ lies on the other side with respect to $A$). 

This implies that the distance $YY'=l(b)$ grows monotonically with the growth of $|\alpha(a)-\alpha_0|$:
$$
\cosh \frac{YY'}{2}=\cosh OY=\cosh OA \cosh AY= \cosh OA \cosh ( l(a) \frac{\alpha(a)-\alpha_0}{4\pi})  
$$
Hence, $|\alpha(a)-\alpha_0|$  may be recovered from $l(b)$. 
So, given the lengths $(l(a),l(b),l(c))$  one may find the Fenchel-Nielsen coordinates $FN(P)$ 
up to two possibilities. In particular, in the neighborhood of a point $\tau\in \T$ where  
$a$ is not orthogonal to $b$, the sign of $(\alpha(a)-\alpha_0)$ does not changes, which implies that the functions
 $(l(a),l(b),l(c))$ form a local coordinate  in $\T\setminus X$, $X=\{\tau\in \T| a\perp b \}$.

\end{proof}

\begin{figure}[!h]
\begin{center}
\psfrag{a_}{\small (a)}
\psfrag{b_}{\small (b)}
\psfrag{d}{\scriptsize $c$}
\psfrag{a}{\scriptsize $a$}
\psfrag{a'}{\scriptsize $a'$}
\psfrag{s_aa'}{\scriptsize $s_{aa'}$}
\psfrag{s_da}{\scriptsize $s_{ca'}$}
\psfrag{s_da'}{\scriptsize $s_{ca'}$}
\psfrag{H1}{\scriptsize $H_1$}
\psfrag{H2}{\scriptsize $H_2$}
\psfrag{H1'}{\scriptsize $H_1'$}
\psfrag{H2'}{\scriptsize $H_2'$}
\psfrag{A}{\scriptsize $A$}
\psfrag{A'}{\scriptsize $A'$}
\psfrag{O}{\scriptsize $O$}
\psfrag{Y}{\scriptsize $Y$}
\psfrag{Y'}{\scriptsize $Y'$}
\psfrag{l}{\scriptsize $p$}
\psfrag{l'}{\scriptsize $p'$}
\psfrag{T}{\scriptsize $T$}
\epsfig{file=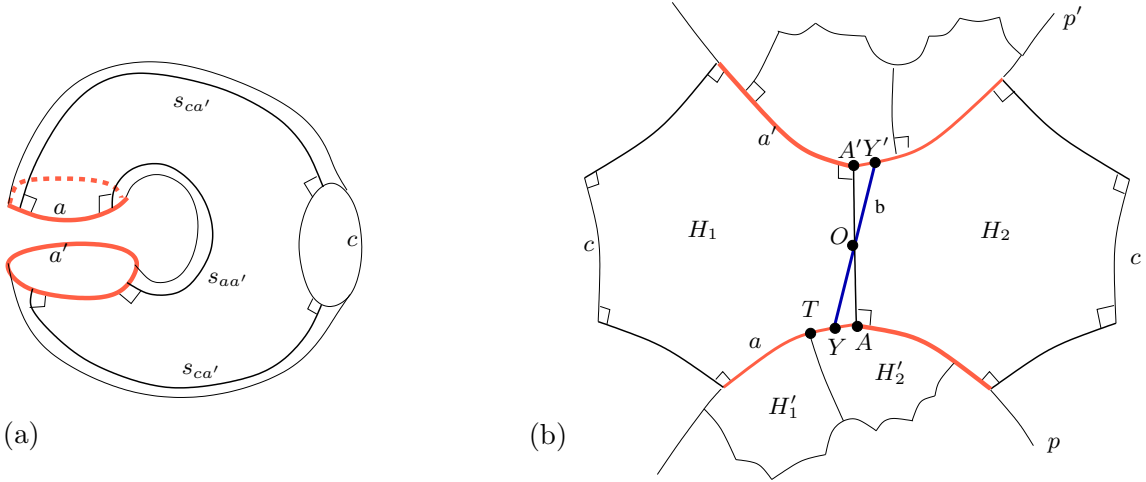,width=0.98\linewidth}
\caption{Length coordinates on a handle} 
\label{ruchka}
\end{center}
\end{figure}

\begin{remark}
\label{limit1}
Given Fenchel-Nielsen coordinates $(l(a),\alpha(a),l(c))$ on the handle, for each pair of lengths $l_0(a)$ and $l_0(c)$ 
there exists a unique angle $\alpha_0(a)$ such that $a$ is orthogonal to $b$.

\end{remark}

\begin{lemma}
\label{flip}
Let $S=S_{0,4}$ be a sphere with four holes, with boundary curves $c_1,c_2,c_3,c_4$. Let $a\in S$ be a closed geodesic
and let $b\in S$ be a closed geodesic obtained from the curve $a$ by a flip.
Then 
\begin{itemize}
\item[(1)] the angle formed by $a$ and $b$ is of the same size for  both intersections of $a$ and $b$;
\item[(2)] the set of functions $\bar x=(l(a),l(b),l(c_1),l(c_2),l(c_3),l(c_4))$ 
is a local coordinate on $\T\setminus X $ 
where $X=\{\tau\in \T | a\perp b \}$; 
\item[(3)] $\bar x$  determines the point $\tau\in\ \T$ up two at most two possibilities.  

\end{itemize}
\end{lemma}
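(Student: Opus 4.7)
The plan is to mirror the argument of Lemma~\ref{handle}. I would set up Fenchel-Nielsen coordinates $FN(P)=\{l(a),l(c_1),\dots,l(c_4),\alpha(a)\}$ built from $P=\{a,c_1,c_2,c_3,c_4\}$, cut $S$ along $a$ into two pairs of pants $P_1$ (bounded by $a,c_1,c_2$) and $P_2$ (bounded by $a,c_3,c_4$), and decompose each $P_i$ into two congruent right-angled hexagons via the three orthogeodesic segments of Lemma~\ref{congruent hexagons}. Each $P_i$ then carries a canonical isometric involution $\sigma_i$ swapping the two hexagons and fixing each boundary component setwise.

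For part~(1), $b$ meets $a$ in two points $p,q$ and decomposes as $\beta_1\cup\beta_2$ with $\beta_i:=b\cap P_i$ a geodesic arc from $p$ to $q$ in $P_i$ going around a specified boundary component. I would argue that $\sigma_i(\beta_i)=\beta_i$ setwise, by combining the uniqueness of the geodesic representative in the free homotopy class of arcs from $a$ to $a$ around the specified boundary with the smoothness condition at $p,q$ that characterises the closed geodesic $b$. Then $\sigma_i$ swaps $p$ and $q$, and since it is an isometry the angles of $\beta_i$ with $a$ at $p$ and at $q$ agree; smoothness of $b$ at $p$ and $q$ transfers the equality to the angles of $b$ with $a$.

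For parts~(2) and~(3), fix $l(a),l(c_1),\dots,l(c_4)$ and study $l(b)$ as a function of $\alpha(a)$. I would first identify the unique $\alpha_0$ with $a\perp b$ (the analogue of Remark~\ref{limit1}): at this twist each $\beta_i$ coincides with the orthogeodesic in $P_i$ from $a$ to $a$, of length $D_i$, and the feet of the two orthogeodesics align across $a$. For general $\alpha(a)$, lifting to $\H^2$ and tiling by the hexagons, the tiles of type $P_1$ and $P_2$ lie on opposite sides of each lift $\hat a$ of $a$, and the two sides slide by $\rho:=l(a)(\alpha(a)-\alpha_0)/(2\pi)$. Part~(1) forces $\hat b$ to meet every lift of $a$ at the same angle, so inside each tile the segment $\hat b\cap\tilde P_j$ becomes the summit of a Saccheri quadrilateral with base of length $D_j$ and legs of common length $\rho/2$ (the analogue of the identity $AY=\rho/2$ in Lemma~\ref{handle}). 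The Saccheri formula
\[
\cosh\frac{|Y_iY_{i+1}|}{2}=\cosh\frac{D_j}{2}\cdot\cosh\frac{\rho}{2}
\]
then expresses $l(b)$, summed over the two tile types per period of $\hat b$, as a strictly monotone function of $|\rho|$, hence of $|\alpha(a)-\alpha_0|$, with minimum $D_1+D_2$ at $\rho=0$. This recovers $|\alpha(a)-\alpha_0|$ from $\bar x$ up to sign (the two possibilities of~(3)) and makes $\bar x$ a local chart on $\T\setminus X$ with $X=\{a\perp b\}=\{\alpha(a)=\alpha_0\}$, which is~(2).

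The main obstacle I expect is part~(1): although $\sigma_i$ is an isometry of $P_i$, it does not in general extend across $a$ to an isometry of $S$, so the angle equality cannot be obtained from a global symmetry and must be extracted from the interaction of the two involutions $\sigma_1,\sigma_2$ with the smoothness condition gluing $\beta_1$ to $\beta_2$ at $p$ and $q$. Once part~(1) is available, parts~(2) and~(3) are a direct adaptation of Lemma~\ref{handle}, the only additional bookkeeping being the alternation of two tile types with distinct orthogeodesic lengths $D_1,D_2$.
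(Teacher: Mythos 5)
Your overall strategy for parts (2) and (3) — cut along $a$, split each pair of pants into two congruent right-angled hexagons, lift to $\H^2$, and express $l(b)$ as a strictly monotone function of $|\alpha(a)-\alpha_0|$ at fixed boundary lengths — matches the paper's; your Saccheri-quadrilateral identity with legs of length $\rho/2$ is essentially the same hyperbolic-trigonometric computation the paper carries out for Lemma~\ref{handle} and transports to $S_{0,4}$, so that part is fine once (1) is available.

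The gap is in part (1), and it is not merely a step you haven't filled in: the claim $\sigma_i(\beta_i)=\beta_i$ fails off the locus $X=\{a\perp b\}$. The fixed-point set of the orientation-reversing isometry $\sigma_i$ of $P_i$ is exactly the union of the three seams, and $\beta_i$ meets that set in a single point $M=\beta_i\cap s_{c_1c_2}$. If $\sigma_i$ preserved $\beta_i$ setwise, it would fix $M$ and preserve the tangent direction to $\beta_i$ there; since $\sigma_i$ acts near $M$ as the reflection in $s_{c_1c_2}$, this would force $\beta_i\perp s_{c_1c_2}$, i.e.\ $\alpha(a)=\alpha_0$. So the reflection symmetry you invoke is available precisely where the statement of the lemma excludes it, and the ``uniqueness of the geodesic representative'' argument does not rescue it, because the geodesic representatives in the class of arcs from $a$ to $a$ form a one-parameter family (the endpoints are free to slide along $a$), not a single arc.

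The paper's symmetry for (1) is genuinely different: it is the orientation-\emph{preserving} point symmetry $\rho_O$ (rotation by $\pi$) about the midpoint $\hat O$ of a lift of $s_{c_1c_2}$, together with the analogous $\rho_{O'}$ for $s_{c_3c_4}$. Their composition $\rho_{O'}\rho_O$ is a hyperbolic translation along the geodesic $\hat O\hat O'$ compatible with the hexagon tiling, so $\hat O\hat O'$ projects to a closed geodesic $c$ with $|c\cap a|=2$; since $\rho_O$ fixes the line $\hat O\hat O'$ and interchanges the two adjacent lifts of $a$ that it crosses, $c$ meets $a$ at equal angles. Varying the choice of lift $\hat O'_k$ (equivalently shifting $\alpha(a)$ by $2\pi k$) sweeps out the Dehn-twist orbit $\{t_a^k c\}$, which by Lemma~\ref{all flips} is the full set of flip curves of $a$, so $b$ occurs among them and (1) follows. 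A reflection in a seam cannot play this role for a generic $b$ (it would carry $\hat b$ to a genuinely different geodesic), which is exactly why your $\sigma_i$-based argument stalls at the point you yourself flagged as the main obstacle.
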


\begin{proof}
The idea of the proof is the same as in the proof of Lemma~\ref{handle}: the coordinate $\bar x$ is obtained from Fenchel-Nielsen
coordinates $FN(P)$ built from pants decomposition $P=\{a,c_1,c_2,c_3,c_4\}$. We show that given the values of
$(l(a),l(c_1),l(c_2),l(c_3),l(c_4))$ 
the length $l(b)$ is a monotonic function on the absolute value
$|\alpha(a)-\alpha_0|$, where $\alpha_0$ is the value of $\alpha(a)\in FN(P)$ at the point of $\T$ such that 
$a$ is orthogonal to $b$ 
(and the values of $(l(a),l(c_1),l(c_2),l(c_3),l(c_4))$  are the chosen ones).
Hence, $l(b)$ determines $\alpha(a)$ up to 2 possibilities. Moreover, in the neighborhood of a point $\tau\in \T$ 
where $|\alpha(a)-\alpha_0|\ne 0$, the sign of $(\alpha(a)-\alpha_0)$ is determined uniquely by the sign at $\tau$.

In more details, the curve $a$ decompose $S$ into two pairs of pants, and each pair of pants is decomposed into two right-angled 
hexagons (respectively, by the segments $s_{ac_1}, s_{c_1c_2},s_{c_2a} $ and  $s_{a'c_3}, s_{c_3c_4},s_{c_4a'} $ orthogonal to a pair of 
boundary components), see Fig.~\ref{flip1}.a. The images of four right-angled hexagons tile the covering hyperbolic
plane: two hexagons adjacent by the image of the side $a$ are shifted  by the distance $\rho=l(a) \frac{\alpha(a)-\alpha_0}{2\pi}$
along the line containing the images of $a$, see  Fig.~\ref{flip1}.b. 

Denote by $O$ and $O'$ the midpoints of images of $s_{c_1,c_2}$ and $s_{c_3,c_4}$. Notice that the symmetry in the point $O$ 
preserves the tiling of the hyperbolic plane by hexagons (compare with Lemma~\ref{congruent hexagons}).
The same holds for the symmetry in $O'$. Consider a line $OO'$ and its intersection with the images of the curve $a$. 
It is easy to see that all angles made by $OO'$ and images of $a$ are equal. Furthermore, $OO'$ intersects the images of 
 $s_{c_1,c_2}$ and $s_{c_3,c_4}$ always in midpoints (to see that consider an image $O''$ of $O$ with respect to the symmetry in $O'$:
it lies on $OO'$ and in the midpoint of some image of $s_{c_1c_2}$, then consider the image of $O'$ with respect to a symmetry in 
$O''$ and so on). This implies that the line $OO'$ is the union of images of some closed geodesic $c\in S$, $|c\cap a|=2$. 
Hence, $c$ may be obtained from $a$ by a flip. 
Notice that $c$ intersects $a$ in two points, forming two angles of the same size. 
The length $l(c)=2\cdot OO'$ increases as $|\alpha(a)-\alpha_0|$ increases
(the distances from the points $O$ and $O'$ to the line $p$ remain constant, but one point glide along $p$ with respect to the other,
so that we may apply Lemma~\ref{glide}).

Increasing the angle $\alpha(a)$, we increase the shift between the adjacent hexagons.
Increasing $\alpha(a)$ by $2\pi$ we obtain the initial tiling of the plane by hexagons, but the line $OO'$  in the new picture is moved,
so that it is an image of another closed curve $c'\in S$ which may be obtained from $a$ by a flip.  Increasing (or decreasing)
 $\alpha(a)$ by $2\pi k$ we run through all curves on $S$ which may be obtained by a flip from $a$ 
(compare with Lemma~\ref{all flips}).
In particular, for some value of $k$ we obtain the curve $b$. This implies statement (1). 
So, the length $l(b)$ increases with growth of $|\alpha(a)-\alpha_0|$. Hence $l(b)$ determines $\alpha(a)$ up to two possibilities,
which implies that the set of functions $\bar x$ determines Fenchel-Nielsen coordinates $FN(P)$ up to two 
possibilities. This proves statement (3).
 If $b$ is not orthogonal to $a$ at $\tau\in \T$ then in the neighborhood of $\tau $ the function
$l(b)$ (together with the chosen value of $\alpha(a)$ at $\tau$) determines completely the function $\alpha(a)$, 
which implies that $\bar x$ is a set of local coordinates,
and statement (2) is also proved.

\end{proof}

\begin{figure}[!h]
\begin{center}
\psfrag{a_}{\small (a)}
\psfrag{b_}{\small (b)}
\psfrag{c1}{\scriptsize $c_1$}
\psfrag{c2}{\scriptsize $c_2$}
\psfrag{c3}{\scriptsize $c_3$}
\psfrag{c4}{\scriptsize $c_4$}
\psfrag{a}{\scriptsize $a$}
\psfrag{a'}{\scriptsize $a'$}
\psfrag{s_ac1}{\scriptsize $s_{ac1}$}
\psfrag{s_c1c2}{\scriptsize $s_{c_1c_2}$}
\psfrag{s_c2a}{\scriptsize $s_{c_2a}$}
\psfrag{s_a'c3}{\scriptsize $s_{a'c3}$}
\psfrag{s_c3c4}{\scriptsize $s_{c_3c_4}$}
\psfrag{s_c4a'}{\scriptsize $s_{c_4a'}$}
\psfrag{A}{\scriptsize $A$}
\psfrag{B}{\scriptsize $B$}
\psfrag{C}{\scriptsize $C$}
\psfrag{D}{\scriptsize $D$}
\psfrag{O}{\scriptsize $O$}
\psfrag{O'}{\scriptsize $O'$}
\psfrag{p}{\scriptsize $p$}
\epsfig{file=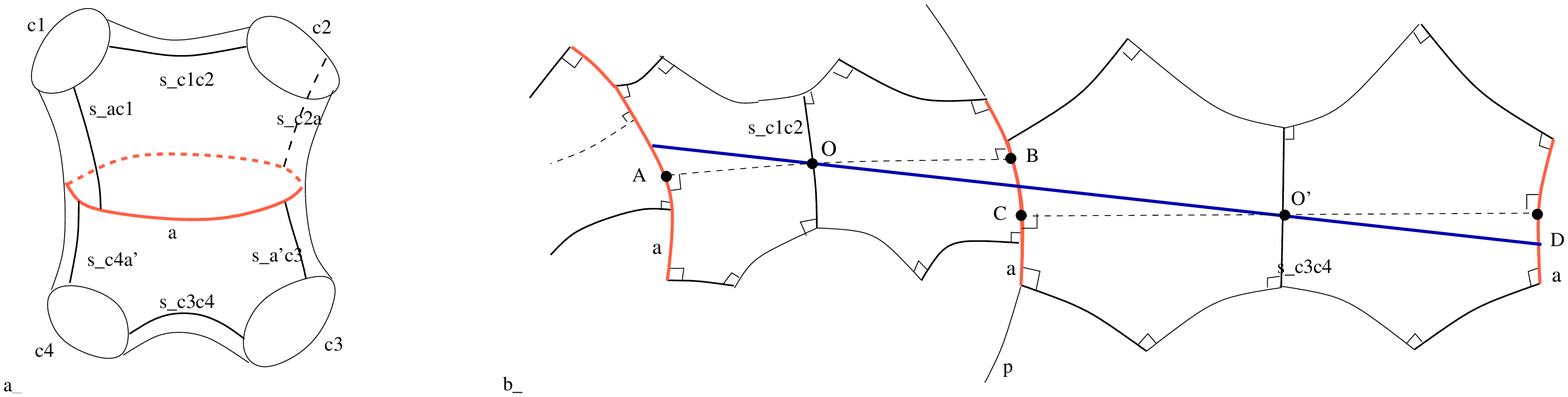,width=0.98\linewidth}
\caption{Length coordinates on a four-holed sphere} 
\label{flip1}
\end{center}
\end{figure}

\begin{remark}
\label{limit2}
Given Fenchel-Nielsen coordinates on $S_{0,4}$, 
for each lengths $l_0(a)$ together with fixed lengths of the boundary components of $S_{0,4}$ 
there exists a unique angle $\alpha_0(a)$ such that $a$ is orthogonal to $b$.

\end{remark}

\section{Locally parametrizing double pants decompositions}
\label{sec local}

In this section we prove Theorem~\ref{local} which states that for an admissible double pants decomposition $DP$ the 
functions $l(DP)$ provide  
a local parameter in neighborhoods of almost all points $\tau\in \T$.

The proof of the theorem is inductive. In Section~\ref{ex}, we build some examples of locally parametrizing 
double pants decompositions. These examples called {\it special decompositions} will be the base of the induction. 
In section~\ref{sec reduction},
we show that any admissible double pants decomposition may be obtained from a special one by a sequence of flips.
Finally,  in Section~\ref{sec flips}
we show that flips preserve the parametrizing properties of double pants decompositions.

\subsection{Examples of locally parametrizing double pants decompositions}
\label{ex}
In this section we present an example of a  locally parametrizing double pants decomposition for each surface $S_{g,n}$.
This will provide a base for the inductive proof of Theorem~\ref{local}.
The construction is obtained as a modification of Fenchel-Nielsen coordinates.

\begin{definition}[{{\it Special decomposition, conjugate curves}}]
A double pants  decomposition $DP=(P_a,P_b)$ is  {\it special} with the {\it standard part} $P_b$ if
the following holds:
\begin{itemize}
\item[(1)] $DP$ contains no double curves;
\item[(2)] the part $P_b$ is standard;
\item[(3)] $DP$ may be obtained from a strictly standard double pants decomposition $DP_0$   
via a sequence of $m=3g-3+n$ flips $f_1,\dots,f_m$ of the $P_a$-part.

\end{itemize}

For a special decomposition $DP=(P_a,P_b)$ we will say that 
a curve $a_i\in P_a$ is {\it conjugate} to a curve $b_i\in P_b$ if either $a_i$ is obtained by a flip $f_i$ from $b_i$
or $a_i$ and $b_i$ belong to the same handle in the standard decomposition $P_b$. 
In the former case $(a_i,b_i)$ will called a {\it flip-conjugate pair}, 
in the latter case $(a_i,b_i)$ will called a {\it handle-conjugate pair}.

\end{definition}

See Fig.~\ref{bas} for an example of a special decomposition. 
Notice, that any special double pants decomposition is admissible.

\begin{figure}[!h]
\begin{center}
\psfrag{1}{\scriptsize $1$}
\psfrag{2}{\scriptsize $2$}
\psfrag{3}{\scriptsize $3$}
\psfrag{4}{\scriptsize $4$}
\psfrag{5}{\scriptsize $5$}
\psfrag{6}{\scriptsize $6$}
\psfrag{7}{\scriptsize $7$}
\psfrag{8}{\scriptsize $8$}
\epsfig{file=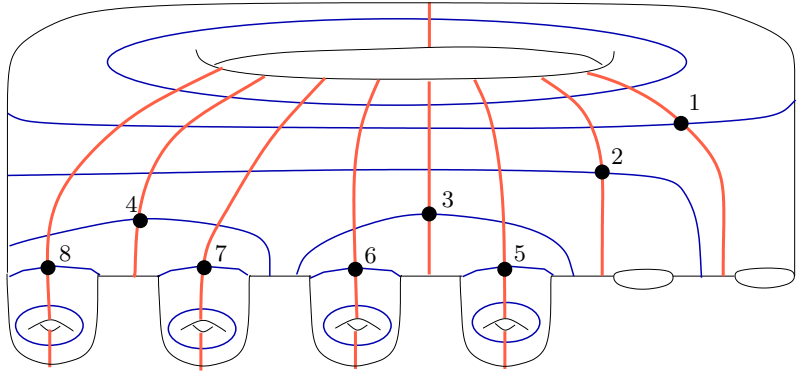,width=0.68\linewidth}
\caption{Example of a special double pants decomposition.
The black nodes show the intersections of the conjugate curves. 
The number near the nodes show the sequence of flips taking the strictly standard decomposition to the special one.} 
\label{bas}
\end{center}
\end{figure}

\begin{lemma}
\label{special}
For each standard pants decomposition $P_b$ 
 there exists a special double pants decomposition $DP=(P_a,P_b)$.

\end{lemma}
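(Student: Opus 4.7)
The plan is to start from a strictly standard double pants decomposition $(P_a^0,P_b)$ and apply a sequence of $3g-3+n$ flips to $P_a^0$ --- one for each non-boundary curve --- chosen so that at the end no non-boundary curve of $P_a$ lies in $P_b$.

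First I would construct the strictly standard base. Let $\h_1,\dots,\h_g$ be the handles cut out by the handle-cutting curves $h_1,\dots,h_g\in P_b$ and let $b_i\in P_b$ be the curve interior to $\h_i$. Define $P_a^0$ by taking $P_b$ and replacing each $b_i$ with a curve $a_i^0\subset\h_i$ with $|a_i^0\cap b_i|=1$. Then $(P_a^0,P_b)$ is strictly standard; its $2g-3+n$ non-boundary double curves are the handle-cutting curves $h_i$ together with the $g-3+n$ curves of $P_b$ that lie outside all handles.

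Next I would perform the flip sequence in three stages. Stage (a): for each $i$, flip the handle-cutting double curve $h_i$ to a new curve $h_i'\notin P_b$. The flip is carried out in the subsurface $\h_i\cup P'\cong S_{1,2}$, where $P'$ is the pants of $P_a^0$ on the non-handle side of $h_i$; cutting $S_{1,2}$ along the non-regular curve $a_i^0$ opens it into a genuine four-holed sphere $S_{0,4}$ in which $h_i$ admits the standard family of flips. By Lemma~\ref{all flips}, any two such flips differ by an integer power of the Dehn twist along $h_i$; the resulting infinite family of candidates for $h_i'$ meets the finite set $P_b$ in only finitely many members, so a valid choice exists. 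After this flip, $a_i^0$ is no longer interior to any handle of the current $P_a$, and hence becomes regular. Stage (b): flip each $a_i^0$ to a curve $a_i\notin P_b$, again using Lemma~\ref{all flips} (applied to the four-holed sphere obtained by cutting $S_{1,2}$ along $h_i'$) to avoid $P_b$. Stage (c): for each remaining non-handle double curve $d_k$ still in $P_a\cap P_b$, flip $d_k$ to some $d_k'\notin P_b$. The total number of flips is $g+g+(g-3+n)=3g-3+n$, as required.

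After these flips the non-boundary curves of $P_a$ are pairwise distinct from those of $P_b$, so $(P_a,P_b)$ contains no double curves; $P_b$ is standard by hypothesis; and $(P_a,P_b)$ has been obtained from the strictly standard $(P_a^0,P_b)$ by a sequence of exactly $3g-3+n$ flips of the $P_a$-part. Admissibility --- and with it the general-position condition on the Lagrangian planes --- is preserved at every step because flips are reversible. The principal obstacle is Stage (a): flipping the handle-cutting curve $h_i$ while the neighbouring handle-interior curve $a_i^0$ is non-regular. This is resolved by enlarging the flipping environment from the self-folded pants to the full $S_{1,2}$, equivalently by cutting along $a_i^0$ so that $h_i$ sits inside a genuine four-holed sphere where the standard flip construction applies.
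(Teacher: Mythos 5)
Your Stages~(a) and~(c) essentially reproduce the paper's proof: start from a strictly standard $(P_a^0,P_b)$ containing $P_b$ and apply a flip of the $P_a$-part to each double curve, each time choosing the new curve outside the finite set $P_b$. Your Stage~(a) observation---that the flip of the handle-bounding double curve $h_i$ is really a flip inside the $S_{0,4}$ obtained by opening the ambient $S_{1,2}$ along the (temporarily non-regular) curve $a_i^0$, so that Lemma~\ref{all flips} supplies the needed $\Z$-family of alternatives---correctly spells out a point the paper's one-line proof leaves implicit.

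Stage~(b) is where the proposal goes astray. You insert $g$ extra flips of the handle-interior curves $a_i^0$ to force the total flip count up to $3g-3+n$, apparently to match the number $m$ in the definition of ``special''. But the $a_i^0$ are not double curves (indeed $a_i^0\ne b_i$ because $\L(P_a^0)$ and $\L(P_b)$ are in general position), so they do not need flipping; and the stated $m=3g-3+n$ is almost certainly a miscount for $2g-3+n$. This is visible from the proof of Lemma~\ref{base-param}, where the $3g-3+n$ Fenchel--Nielsen angles are replaced one-for-one by lengths using one Lemma~\ref{handle} per handle-conjugate pair ($g$ of them) and one Lemma~\ref{flip} per flip; consistency requires $g+m=3g-3+n$, i.e.\ $m=2g-3+n$, which is exactly what the paper's proof of the present lemma produces. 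Worse, Stage~(b) is actively harmful: after Stage~(a) the flip of $a_i^0$ lives in the $S_{0,4}$ bounded by $h_i'$, not by $h_i$, so the new curve $a_i$ is only constrained to avoid $h_i'$. Generically $a_i$ crosses $h_i$ and hence is not contained in the handle $\h_i$; then $(a_i,b_i)$ is neither handle-conjugate (both curves must lie in $\h_i$) nor flip-conjugate ($a_i$ is obtained by flipping $a_i^0\notin P_b$, not $b_i$). Such a decomposition cannot be fed into the coordinate-trading argument of Lemma~\ref{base-param}, which is the sole reason ``special'' decompositions exist in the paper. Omit Stage~(b); Stages~(a) and~(c) already give the decomposition the paper intends.
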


\begin{proof}
To build the required decomposition we consider a strictly standard double pants decomposition $DP'=(P_a',P_b)$ containing $P_b$ and 
apply a flip of the $P_a$-part to each of the double curves.

\end{proof}

\begin{notation}
Let   $DP=(P_a,P_b)$  be a special double pants decomposition. Denote by $Z(DP)\in \T$ the locus of points where
$a_i$ is orthogonal to $b_i$ for at least one pair of conjugate curves $(a_i,b_i)\in DP$.
\end{notation}

\begin{remark}
\label{limit}
Let $(a_i,b_i)$ be a pair of conjugate curves in a special double pants decomposition.
Remarks~\ref{limit1} and~\ref{limit2} imply that the locus of points where $a_i$ is orthogonal to $b_i$ is
homeomorphic to a hyperplane in  $\T=\R^{3g-3+2n}_{>0}\times \R^{3g-3+n}$ (here Remarks~\ref{limit1} and~\ref{limit2} work for cases
of handle-conjugate and flip-conjugate pairs respectively).
Therefore, the set
$Z(DP)\in \T$ is homeomorphic to a union of $3g-3+n$ hyperplanes
in $\T=\R^{3g-3+2n}_{>0}\times \R^{3g-3+n}$.

\end{remark}

\begin{lemma}
\label{base-param}
Let   $DP=(P_a,P_b)$  be a special double pants decomposition. Then 
\begin{itemize}
\item[(1)]  $l(DP)$ is a local coordinate in $\T\setminus Z(DP)$;
\item[(2)]  $l(DP)$ determine the point in $\T$ up to at most $2^{3g-3+n}$ choices.

\end{itemize}
\end{lemma}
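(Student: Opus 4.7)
The strategy is to compare $l(DP)$ with the Fenchel--Nielsen coordinates $FN(P_b)$ built from the standard part $P_b$. Since $P_b\subset DP$, the $3g-3+2n$ length entries of $FN(P_b)$ are already among the entries of $l(DP)$, and the task reduces to recovering the $3g-3+n$ twist parameters $\alpha(b_j)$ from the remaining lengths $l(a_i)$, $a_i\in P_a\setminus P_b$. The conjugate-pair structure of a special decomposition sets up a bijection between the non-boundary curves of $P_a$ and those of $P_b$, so I index them so that $(a_i,b_i)$ is a conjugate pair.

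The key step is to show that, with all length coordinates fixed, the map
\begin{equation*}
(\alpha(b_1),\dots,\alpha(b_{3g-3+n}))\longmapsto(l(a_1),\dots,l(a_{3g-3+n}))
\end{equation*}
has a diagonal Jacobian. For each conjugate pair $(a_i,b_i)$ let $\Sigma_i\subset S$ be the subsurface containing both curves: a handle $S_{1,1}$ cut out by a curve of $P_b$ if $(a_i,b_i)$ is handle-conjugate, or a four-holed sphere $S_{0,4}$ whose boundary consists of four curves of $P_b$ if $(a_i,b_i)$ is flip-conjugate. In either case the intrinsic hyperbolic metric on $\Sigma_i$---and hence the geodesic length $l(a_i)$---is determined by the lengths of $\partial\Sigma_i$ together with $l(b_i)$ and the single twist $\alpha(b_i)$; the twists $\alpha(b_j)$ for $j\ne i$ merely reglue $\Sigma_i$ to its complement and leave the interior geometry unchanged. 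Hence $\partial l(a_i)/\partial\alpha(b_j)=0$ for $j\ne i$.

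Lemma~\ref{handle} (handle-conjugate case) and Lemma~\ref{flip} (flip-conjugate case) then furnish the content of the diagonal: for each $i$ the length $l(a_i)$ is a strictly monotone function of $|\alpha(b_i)-\alpha_0^{(i)}|$, where $\alpha_0^{(i)}$ is the unique value of $\alpha(b_i)$ (for the fixed length coordinates) at which $a_i\perp b_i$. Thus the $i$-th diagonal entry of the Jacobian vanishes exactly on the hyperplane $\{\alpha(b_i)=\alpha_0^{(i)}\}$, and the union of these $3g-3+n$ hyperplanes is $Z(DP)$ by Remark~\ref{limit}. This proves (1): the change of variables from $FN(P_b)$ to $l(DP)$ is block-triangular with non-vanishing determinant precisely on $\T\setminus Z(DP)$. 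For (2), each value $l(a_i)$ leaves at most two possible values of $\alpha(b_i)$ (symmetric about $\alpha_0^{(i)}$), and because the $3g-3+n$ choices decouple we obtain at most $2^{3g-3+n}$ preimages in total.

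The main thing that has to be verified carefully is the decoupling claim, i.e.\ that every $\Sigma_i$ is bounded by curves of $P_b$. For handle-conjugate pairs this is immediate from the definition of a standard decomposition. For flip-conjugate pairs it follows because a flip in a pants decomposition is a local modification on a four-holed sphere whose boundary consists of the other curves of the ambient decomposition; in the strictly standard $(P_a',P_b)$ all curves of $P_a'$ adjacent to the flipped double curve are themselves curves of $P_b$, so the boundary of the relevant $S_{0,4}$ lies in $P_b$. Once this topological decoupling is established, the lemma is assembled directly from Lemmas~\ref{handle} and~\ref{flip}.
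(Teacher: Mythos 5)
Your high-level strategy — compare $l(DP)$ with $FN(P_b)$ and invoke Lemmas~\ref{handle} and~\ref{flip} to trade each twist $\alpha(b_i)$ for a length $l(a_i)$ — is the same as the paper's. However, the key intermediate claim, that the map $(\alpha(b_j))\mapsto(l(a_i))$ has a \emph{diagonal} Jacobian, equivalently that each $\Sigma_i$ is bounded entirely by curves of $P_b$, is false, and your verification contains a concrete error. You assert that in the strictly standard $(P_a',P_b)$ all curves of $P_a'$ adjacent to a flipped double curve lie in $P_b$. This fails for the handle curves: if $c_j\in P_b$ cuts out a handle $\h_j$ containing $a_j\in P_a'$ and $b_j\in P_b$ with $|a_j\cap b_j|=1$, then $c_j$ is a double curve, and the $S_{0,4}$ in which its flip $a_i$ lives is $(\h_j\cup p)$ cut along $a_j$, so $a_j\in P_a'\setminus P_b$ is on its boundary. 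Moreover $b_j$ sits inside that subsurface and necessarily meets $a_i$ (any arc of $a_i$ crossing $c_j$ into $\h_j\setminus a_j$ separates the two copies of $a_j$, while $b_j$ joins them), so by Wolpert's formula $\partial l(a_i)/\partial\alpha(b_j)\neq 0$ generically. The same issue recurs for later flips in the sequence $f_1,\dots,f_m$: after earlier flips, the $S_{0,4}$ around the next double curve can be bounded by previously produced $a$-curves, not $P_b$-curves.

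What is true, and what the paper actually proves, is a \emph{triangular} structure obtained by a sequential coordinate change: first swap $\alpha(b_j)\leftrightarrow l(a_j)$ for all $g$ handle-conjugate pairs via Lemma~\ref{handle}, then pass through the mixed coordinates $mix(DP_i,FN(P_b))$ one flip at a time, applying Lemma~\ref{flip} to the $S_{0,4}$ around $b_i$ in the $P_a$-part of $DP_{i-1}$. At step $i$ the boundary lengths of that $S_{0,4}$ are already coordinates of $mix(DP_{i-1},FN(P_b))$ (they are lengths of curves of $P_b$ or of earlier $a$-curves), so the swap $\alpha(b_i)\leftrightarrow l(a_i)$ is legitimate and contributes one diagonal entry. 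Your last sentence does claim ``block-triangular,'' but nothing in your argument chooses an ordering for which the off-diagonal dependence of $l(a_i)$ on $\alpha(b_j)$ is one-sided; establishing that ordering is precisely the sequential induction in the paper's proof, and it is the step your proposal is missing.
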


\begin{proof}
Suppose that $P_b$ is a standard part of $DP$.
Choose Fenchel-Nielsen coordinates $FN(P_b)$ based on the pants decomposition $P_b$. It is a global coordinate on $\T$.
We will substitute angle coordinates of $FN(P_b)$ by length coordinates one by one.

Let $f_1,\dots,f_{m}$ be the sequence of flips described in the Definition~\ref{special}, let $b_1,\dots,b_{m}$ be 
the curves of $P_b$ such that $f_i$ is a flip applied to $b_i$. 
Let $DP_i=f_i\circ\dots\circ f_1(DP_0)$, where $DP_0$ is the corresponding strictly standard double pants decomposition.
Applying  Lemma~\ref{handle} sufficiently to all handle-conjugate pairs of curves $a_i,b_i\in DP$ we see that
$mix(DP_0,FN(P_b))$ is a local coordinate away from $Z(DP_0)$ and defines the coordinate $FN(P_b)$  up to $2^g$ choices.
Then, applying Lemma~\ref{flip} to each pair of flip-conjugate curves successively (more precisely, to the subsurface $S_{0,4}$ obtained 
by a union of two pairs of pants adjacent to $b_i$ in $P_a$-part of $DP_i$), we see 
that $mix(DP_i,FN(P_b))$ 
is a local coordinate away from $Z(DP_i)$ and defines $mix(DP_{i-1},FN(P_b))$ up to 2 choices.
This implies the lemma.

\end{proof}

\subsection{Induction step: reduction to flips}
\label{sec reduction}

\begin{lemma}
\label{only flips}
Let $DP$ be an admissible double pants decomposition. Then there exists a sequence of flips   $f_1,\dots,f_k$
such that  $DP_0=f_k\circ\dots\circ f_1(DP)$ is a strictly standard double pants decomposition.

\end{lemma}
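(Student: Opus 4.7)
The plan is to combine the admissibility hypothesis with the classical connectedness of the flip graph of pants decompositions of a holed sphere. By the definition of admissibility, a sequence of flips $g_1,\dots,g_l$ sends $DP$ to some standard double pants decomposition $DP'=(P_a',P_b')$. Let $c_1,\dots,c_g\in P_a'\cap P_b'$ be the distinguished double curves cutting out handles $\h_1,\dots,\h_g$, and let $B\cong S_{0,g+n}$ be the body obtained by cutting $S$ along them. In any admissible standard decomposition the unique $P_a'$-curve and the unique $P_b'$-curve inside each $\h_i$ are automatically distinct: otherwise their common homology class (non-zero, since the curve is non-separating in $\h_i$) would lie in $\L(P_a')\cap\L(P_b')$, violating general position. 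Hence producing a strictly standard decomposition reduces to arranging $P_a'|_B=P_b'|_B$, i.e.\ flipping the $P_a'$-curves in the body until they coincide with those of $P_b'|_B$.

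The restrictions $P_a'|_B$ and $P_b'|_B$ are two pants decompositions of the holed sphere $B=S_{0,g+n}$. I invoke the classical fact that the flip graph of pants decompositions of a planar surface is connected (no $S$-move is needed since $B$ has genus $0$): there is a finite flip sequence $h_1,\dots,h_k$ inside $B$ taking $P_a'|_B$ to $P_b'|_B$. I lift each $h_j$ to a flip of the $P_a$-part of the ambient double pants decomposition of $S$: the replacement curve stays inside $B$, hence is disjoint from $c_1,\dots,c_g$ and from the handle curves of $P_a'$, and a small isotopy (if needed) avoids coincidences with any fixed finite set of curves. The concatenation $g_1,\dots,g_l,h_1,\dots,h_k$ will then be the desired sequence, provided each intermediate configuration is still a valid double pants decomposition.

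The one non-trivial check is that $\L(P_a'')\cap\L(P_b')=0$ persists at every stage, where $P_a''$ denotes the current $P_a$-part. The key observation is that every simple closed curve inside $B$ is separating in $S$ (one of its sides inside $B$, together with whichever handles are attached along it, bounds a subsurface of $S$), hence null-homologous. Consequently $\L(P_a')$ is unaffected by any body-flip, and the general-position condition is preserved for free. The main obstacle I expect is precisely this lift: verifying that flips performed inside $B$ really are admissible as flips of the double pants decomposition. Legality boils down to (i) avoiding accidental coincidences of the new curve with an existing one, which is a genericity point handled by a small isotopy, and (ii) preserving Lagrangian general position, which is handled by the null-homology argument above. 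Everything else reduces to standard material on pants-decomposition flip graphs on planar surfaces.
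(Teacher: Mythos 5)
Your proof follows the same route as the paper's: reduce, via admissibility, to a standard decomposition, then invoke Hatcher--Thurston flip transitivity on pants decompositions of the planar body $S_{0,g+n}$ to match the two body parts and reach a strictly standard decomposition. You supply extra detail the paper leaves implicit (the homology check that $\L(P_a')$ is unchanged by body flips, since body curves are separating in $S$, so Lagrangian general position persists), which is a genuine point worth recording; the aside about a ``small isotopy'' to avoid coincidences is misplaced since curves are taken up to homotopy, but it is also unnecessary because body flips automatically yield curves distinct from the rest of $P_a'$.
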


\begin{proof}
Since $DP$ is an admissible decomposition, there exists a sequence of flips taking $DP$ to a standard 
double pants decomposition. It is known that flips act transitively on pants decompositions of $S_{0,k}$ (see~\cite{HT}),
which implies that any strictly standard double pants decomposition may be transformed to a strictly standard ones by flips.

\end{proof}

\begin{lemma}
\label{without double}
Let $DP$ be a double pants decomposition containing no double curves.
Suppose that $DP'=f_k\circ \dots \circ f_1 (DP)$, where $f_i$, $i=1,\dots,k$, is a flip. 
If $DP'$ contains no double curves
then there exists a sequence of flips $g_1,\dots,g_{r}$ such that   $DP'=g_r\circ \dots \circ g_1 (DP)$
and  no of the decompositions
$g_i\circ\dots\circ g_1(DP)$, $i=1,\dots,r$ contains double curves. 

\end{lemma}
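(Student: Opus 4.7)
The plan is to argue by induction on the number $N$ of \emph{bad} intermediate decompositions $DP_j := f_j\circ\cdots\circ f_1(DP)$ (for $1\le j<k$) that contain a double curve. If $N=0$ one takes $g_i=f_i$; otherwise, among all flip-sequences from $DP$ to $DP'$ choose one minimizing $N$ and derive a contradiction by producing a sequence with strictly smaller $N$. Take the smallest bad index $j$. Since $DP_{j-1}$ has no double curves and $f_j$ changes a single curve, the new double curve in $DP_j$ must be the curve $c^*$ produced by $f_j$; assume without loss of generality that $f_j$ is a flip of $P_a^{(j-1)}$ sending some $c$ to $c^*$, so that $c^*\in P_b^{(j-1)}$. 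Let $m>j$ be the smallest index with $c^*$ no longer a double curve of $DP_m$ (this exists since $DP'$ has no double curves). For every $j\le l<m$ we have $c^*\in P_a^{(l)}\cap P_b^{(l)}$, which forces each intermediate flip $f_{j+1},\dots,f_{m-1}$ to act on a curve disjoint from $c^*$. The flip $f_m$ must itself act on $c^*$, either in $P_a$ (Case~A) or in $P_b$ (Case~B).

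In Case~A, the flip $f_m$ acts on $c^*$ in $P_a$, sending it to some $\hat c$. By the preceding observation, $f_m$ commutes with each of $f_{j+1},\dots,f_{m-1}$ (they act on the other part, or on $P_a$-curves disjoint from $c^*$), so $f_m$ can be slid next to $f_j$; the pair $(f_j,f_m)$ then acts on $P_a$ alone, with net effect $c\mapsto c^*\mapsto \hat c$. I would replace it by a short sequence of flips inside the four-holed sphere formed by the pants of $P_a^{(j-1)}$ adjacent to $c$: a single flip $c\mapsto\hat c$ if $|c\cap \hat c|=2$, or an interpolation obtained by adjusting the Dehn-twist index via Lemma~\ref{all flips}, in either case chosen to avoid coincidence with any curve of $P_b^{(j-1)}$ so as not to create a new double curve. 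In Case~B, I would insert before $f_j$ an auxiliary flip $g$ sending $c^*\in P_b^{(j-1)}$ to some $\bar c$. Using the infinite family of flip-targets parametrized by Dehn twists along $c^*$ (Lemma~\ref{all flips}), $\bar c$ can be chosen disjoint from all curves of $P_a^{(j-1)}$ and distinct from every curve of $P_b^{(j-1)}$, so that after inserting $g$ the flip $f_j$ no longer creates a double curve. One then replays $f_{j+1},\dots,f_{m-1}$ and, at step $m$, flips the evolved $\bar c$ to the original target $\tilde c$ in place of the flip of $c^*$.

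The main obstacle is to verify that this local surgery really produces a valid flip-sequence which avoids double curves throughout and still terminates at $DP'$. In Case~B especially, the replayed intermediate flips in $P_b$ may need their targets adjusted to remain disjoint from $\bar c$, and these adjustments can cascade; but each constraint (disjointness from $\bar c$, non-coincidence with any other current curve, matching the prescribed final outcome) forbids only finitely many values of the corresponding Dehn-twist parameter, while Lemma~\ref{all flips} provides an infinite family of legitimate targets, so an inductive choice of Dehn-twist indices can simultaneously satisfy every requirement. Proposition~\ref{ideal} rules out accidental modular-group identifications that could obstruct landing on the prescribed $DP_m$. Iterating the reduction drives $N$ to zero and yields the required sequence $g_1,\dots,g_r$.
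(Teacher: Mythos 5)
Your proposal takes a genuinely different route from the paper, and it has a real gap.

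The paper argues globally: since flips of the $P_a$-part commute with flips of the $P_b$-part, one first ``pushes'' every curve of $P_a$ off the finite set $C$ of all curves that ever appear during $f_1,\dots,f_k$ (using the fact that for a given curve in a given pants decomposition the set of possible flip targets is infinite or empty, and in the empty case one first flips a neighbouring curve), then moves $P_b$ to $P_b'$ freely, then moves the displaced $P_a$ to $P_a'$, pushing $P_b'$ off the new collision set as needed. No minimal-counterexample or local surgery is required; the whole argument rests on the $P_a$/$P_b$ commutation and the abundance of flip targets.

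Your Case~A rests on the claim that $f_m$ (a $P_a$-flip of $c^*$) commutes with each of $f_{j+1},\dots,f_{m-1}$ because those act ``on the other part, or on $P_a$-curves disjoint from $c^*$.'' That is false in the second case: two $P_a$-flips on disjoint curves need not commute. The flip of $c^*$ is performed inside the four-holed sphere $S_*$ cut out by the two pants of $P_a$ adjacent to $c^*$; an intermediate $P_a$-flip $a_l\mapsto a_l'$ with $a_l,a_l'$ disjoint from $c^*$ can nevertheless change a boundary curve of $S_*$ (e.g.\ if $a_l$ is another boundary of one of those pants), so the target $\hat c$ of $f_m$ may intersect $a_l$ or $a_l'$, and the transposition is illegal. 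You would need to argue, as the paper does, that the target can be re-chosen from an infinite family so as to dodge every curve that ever appears, not merely the curves present at a single stage. In Case~B, the ``cascading adjustments'' are stated but not carried out: you must show that the adjusted targets can always be chosen simultaneously to be disjoint from $\bar c$, disjoint from all other current curves, and still reach the prescribed $DP_m$ — and the last constraint is not of the ``forbids finitely many Dehn-twist indices'' type (it typically forces a specific index), so the counting argument does not obviously close. Finally, the appeal to Proposition~\ref{ideal} is not doing anything here: that proposition is about faithfulness of the $Mod$-action on decompositions, not about flip-sequence bookkeeping. The paper's global push-away argument sidesteps all of these difficulties; if you want a local surgery proof you would need to control the collision sets uniformly, which effectively reproduces the paper's construction.
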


\begin{proof}
Denote  $DP=(P_a,P_b)$ and  $DP'=(P_a',P_b')$
We will use the fact that flips of the $P_a$-part commute with flips of the $P_b$-part.

Let $C=\{c \ | \ c\in DP_i=f_i\circ \dots \circ f_1 (DP), 0\le i\le k \}$ be a set of all curves appearing during the transformation
from $DP$ to  $DP'=f_k\circ \dots \circ f_1 (DP)$. 

First, for each of the curves $a_i\in P_a$ we apply a flip $g_i$ 
so that $g_i(a_i)\notin C$: this is possible,  since $C$ is a finite set, while a set of flips for a given curve $a_i$ in a given pants 
decomposition is either infinite or empty (in the later case,  $a_i$ lies in a handle bounded by some other curve $a_j$, so we can first
destroy the handle applying a flip to $a_j$, and then apply a flip to $a_i$). Denote by $P_a''$ 
the obtained $P_a$-part of the decomposition.  

Second, we  transform $P_b$ to $P_b'$ by the same sequence of flips as in $f_1,\dots,f_k$.

Third, there exists a sequence $f_1',\dots,f_l'$ of flips taking $P_a''$ to $P_a'$.
 Denote $\xi=f_l'\circ \dots \circ f_1'$. 
Denote $C'=\{c \ | \ c\in DP''=f_i'\circ \dots \circ f_1' (DP), 0\le i\le l \}$.
For each of the curves $b_i\in P_b'$ we apply a flip $g_i'$ 
so that $g_i'(b_i)\notin C'$.

Next,  we  transform $P_b$ to $P_b'$ by the same sequence of flips as in $f_1,\dots,f_k$.

Finally, we apply the inverse sequence  $\xi^{-1}$ to take  the $P_b$-part back to the state $P_b'$.

Clearly, we can not obtain double curves at any stage of the transformation, so the lemma is proved.
 
\end{proof}

Lemma~\ref{only flips} together with Lemma~\ref{without double} imply the following lemma.

\begin{lemma}
\label{only flips without double}
Let $DP$ be an admissible double pants decomposition without double curves. 
Then there exists a special double pants decomposition $DP'$ and a
sequence of flips $f_1,\dots,f_k$ such that $DP_0=f_k\circ\dots\circ f_1(DP)$ and no of the decompositions
$f_i\circ\dots\circ f_1(DP)$, $i=1,\dots,k$, contains double curves.

\end{lemma}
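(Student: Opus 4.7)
The plan is to combine the two preceding lemmas in a direct way. By Lemma~\ref{only flips}, since $DP$ is admissible, there exists a sequence of flips $h_1,\dots,h_s$ transforming $DP$ into a strictly standard double pants decomposition $DP_{ss}$. This gives us a candidate terminal point, but a strictly standard decomposition contains $2g-3+n$ double curves outside the handles, so I do not take $DP_{ss}$ itself as the target.

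Instead, I would exploit the very definition of a special decomposition: by the definition, a special decomposition $DP'$ with standard part $P_b^{ss}$ is obtained from a strictly standard decomposition $DP_{ss}$ by applying exactly $m=3g-3+n$ flips $\phi_1,\dots,\phi_m$ of the $P_a$-part (each chosen to destroy the doubleness of one of the non-handle double curves while keeping the $P_b$-part unchanged). Choosing such flips starting from the strictly standard $DP_{ss}$ produced above yields a special decomposition $DP'$, and composing $h_1,\dots,h_s,\phi_1,\dots,\phi_m$ gives a sequence of flips transforming $DP$ into $DP'$.

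At this stage both endpoints are ``safe'': $DP$ has no double curves by hypothesis, and $DP'$ has no double curves by item~(1) of the definition of a special decomposition. So Lemma~\ref{without double} applies to the pair $(DP,DP')$ together with the flip sequence constructed above and produces a new sequence of flips $f_1,\dots,f_k$ with $f_k\circ\cdots\circ f_1(DP)=DP'$ and such that every intermediate decomposition $f_i\circ\cdots\circ f_1(DP)$ also contains no double curves. This is exactly the conclusion of the lemma (identifying the $DP_0$ in the statement with the special $DP'$).

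The only delicate point is the construction in the middle step: one must make sure that the $m$ flips $\phi_j$ applied to $DP_{ss}$ genuinely produce an admissible decomposition fitting the definition of \emph{special}, i.e.\ that each $\phi_j$ can be chosen to replace one double curve of $P_a^{ss}$ by a curve that does not coincide with, or create coincidence with, any curve of $P_b^{ss}$. This is not really an obstacle, however, since for each non-handle curve of a strictly standard decomposition the set of admissible flip-images is infinite (as noted in the proof of Lemma~\ref{without double}), so we can always avoid the finitely many forbidden curves; once this is arranged, the resulting $DP'$ fulfills all three conditions of the definition of a special decomposition, and the argument goes through.
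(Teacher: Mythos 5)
Your proof is correct and is essentially the argument the paper intends. The paper's own ``proof'' is the single sentence that Lemma~\ref{only flips} together with Lemma~\ref{without double} imply this lemma, and you have supplied exactly the missing middle step: passing from the strictly standard decomposition produced by Lemma~\ref{only flips} (which has double curves, so cannot be fed directly into Lemma~\ref{without double}) to a special decomposition that has none. That step is the content of Lemma~\ref{special} and its proof, so you could also invoke that lemma directly instead of reconstructing the flips by hand; but the construction you give is the same one.
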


\subsection{Induction step: flips}
\label{sec flips}  
In this section we show that flips take locally parametrizing double pants decompositions to locally parametrizing ones.

In the next lemma we show this property for almost all flips.

\begin{lemma}
\label{almost all}
Let $DP$ be a parametrizing double pants decomposition at $\tau\in \T$. Let  $f'$ and $f''$ be two different flips of the same curve $c\in DP$, 
such that neither $DP'=f'(DP)$ nor $DP''=f''(DP)$ contain double curves. 
If $DP'$ is not parametrizing at $\tau\in\T$  then $DP''$ is parametrizing at $\tau$.

\end{lemma}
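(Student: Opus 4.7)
The plan is to localize the question to the subsurface $\Sigma\subset S$ supporting the flip of $c$---generically a four-holed sphere $S_{0,4}$ formed by the two pairs of pants adjacent to $c$ in the part of $DP$ containing $c$ (with possible boundary identifications if a neighbouring pair of pants is self-folded, in which case the argument is adapted using Lemma~\ref{handle})---and then to combine Lemmas~\ref{flip}, \ref{handle}, and~\ref{all flips} to reduce the claim to a uniqueness statement for an orthogonality angle at $\tau$.

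First, I classify the flips of $c$. By Lemma~\ref{all flips}, any two flips of $c$ inside $\Sigma$ differ by an integer power of the Dehn twist $t_c$; in particular $c''=t_c^{k}(c')$ for some $k\in\Z\setminus\{0\}$. Let $F(l,\alpha)$ denote the length of a fixed reference flipped curve, regarded as a function of the internal Fenchel--Nielsen coordinates $(l(c),\alpha(c))$ of $\Sigma$ with the boundary lengths of $\Sigma$ held fixed. Since $t_c$ acts on Fenchel--Nielsen coordinates by shifting $\alpha(c)$ by $2\pi$, the length function of the $j$-th iterated flip is obtained from $F$ by the substitution $\alpha(c)\mapsto \alpha(c)-2\pi j$.

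Second, I reduce the failure of $DP'$ at $\tau$ to the orthogonality condition $c\perp c'$ at $\tau$. Because $DP$ is parametrizing at $\tau$, the Jacobian of $l(DP)$ relative to Fenchel--Nielsen coordinates adapted to $DP$ is nonzero at $\tau$. The Jacobian of $l(DP')$ differs only in the row corresponding to $c$, which is replaced by the gradient of $l(c')$. Since $l(c')$ depends on no Fenchel--Nielsen coordinates outside $\Sigma$, a cofactor expansion of the new determinant, combined with the explicit hyperbolic-trigonometric formula for $F$ coming from the hexagonal tiling in the proof of Lemma~\ref{flip} (respectively Lemma~\ref{handle}), identifies the failure locus of $DP'$ precisely with the subsurface orthogonality locus $\{c\perp c'\}\subset\T$.

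Third, I conclude via uniqueness of the orthogonality angle. By Remark~\ref{limit2} (respectively Remark~\ref{limit1}), once $l(c)$ and the boundary lengths of $\Sigma$ are prescribed, the condition $c\perp c'$ determines $\alpha(c)$ uniquely; denote this value by $\alpha_0'$. By the first step, the corresponding angle for $c''=t_c^{k}(c')$ is $\alpha_0'+2\pi k$, which is distinct from $\alpha_0'$ since $k\ne 0$. Hence if $DP'$ fails to parametrize at $\tau$, then the value of $\alpha(c)$ at $\tau$ equals $\alpha_0'$ and cannot simultaneously equal $\alpha_0'+2\pi k$; therefore $c\not\perp c''$ at $\tau$, and the second step gives that $DP''$ is parametrizing at $\tau$.

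The main obstacle is the Jacobian reduction in the second step: a priori the global vanishing of the Jacobian of $l(DP')$ in $\T_{g,n}$ mixes the variation of $l(c')$ with the twist dependencies of all the $P_b$-lengths in $DP\setminus\{c\}$, not just with those of $\Sigma$. The cancellation that collapses these extra contributions to the subsurface orthogonality condition has to be extracted from the block structure of the Jacobian in the Fenchel--Nielsen basis---using that the $P_a$-length rows form an identity block and that $l(c')$ has no dependence on any Fenchel--Nielsen twist coordinate other than $\alpha(c)$---together with the explicit monotonicity in $|\alpha(c)-\alpha_0'|$ of $F$ provided by the hexagonal tiling construction.
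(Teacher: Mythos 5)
Your second step is where the argument breaks, and it is a genuine gap rather than a detail to be filled. Work in $FN(P_a)$ coordinates and expand the Jacobian of $l(DP')$ along the row of $l(c')$. The entries of that row against the columns $l(\partial_i\Sigma)$ have vanishing cofactors (the corresponding boundary rows are basis vectors whose only nonzero entry lives in the deleted column), so you are left with exactly two terms:
\[
\det J' \;=\; \frac{\partial l(c')}{\partial l(c)}\,\det J \;\pm\; \frac{\partial l(c')}{\partial \alpha(c)}\, A ,
\]
where $A$ is the minor obtained by deleting the $l(c')$-row and the $\alpha(c)$-column; after reducing by the identity blocks, $A$ is a determinant of partial derivatives $\partial l(b_j)/\partial(\,l(c),\alpha(a_i)_{a_i\ne c}\,)$ of the $P_b$-lengths. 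This $A$ does not vanish in general and is intrinsically global, so the vanishing of $\det J'$ is \emph{not} equivalent to the subsurface orthogonality $\partial l(c')/\partial\alpha(c)=0$. In fact the opposite happens on the orthogonality locus: there $\det J' = \frac{\partial l(c')}{\partial l(c)}\det J$, which is generically nonzero, so $DP'$ \emph{does} parametrize exactly where you declare it to fail. (The orthogonality that governs Lemma~\ref{flip} is between a $P_a$-curve and a $P_b$-curve of a conjugate pair, not between $c$ and its own flip $c'$ inside the same part.) Consequently Step~3, which deduces a contradiction from uniqueness of the orthogonality angle, has no footing.

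If you push the determinant computation further, simultaneous vanishing of $\det J'$ and $\det J''$ with $\det J\neq 0$ forces the $2\times 2$ ``Wronskian'' $\frac{\partial l(c')}{\partial l(c)}\frac{\partial l(c'')}{\partial\alpha(c)}-\frac{\partial l(c'')}{\partial l(c)}\frac{\partial l(c')}{\partial\alpha(c)}$ to vanish, so your scheme would need the nonvanishing of this quantity, i.e.\ independence of $dl(c')$ and $dl(c'')$ on $\T(\Sigma)$ with fixed boundary. That is not supplied by Lemma~\ref{all flips}, Lemma~\ref{flip}, or the orthogonality remarks, and is a nontrivial additional input. The paper avoids the cofactor book-keeping entirely: if $DP'$ fails at $\tau$, the kernel of its Jacobian gives a deformation $\xi$ that must carry a nonzero twist along $c'$ and is forced to coincide with the unique direction $\eta$ preserving all $DP$-lengths except $l(c)$; running the same argument for $DP''$ identifies $\eta$ with a nonzero twist along $c''$, and the contradiction is that $\eta$ restricted to the four-holed sphere $S_*$ cannot be proportional simultaneously to the two distinct twist deformations $t_{c'}$ and $t_{c''}$. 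Both routes ultimately hinge on the same independence fact, but the kernel argument localizes cleanly to $S_*$ and avoids conflating the failure locus with a naive orthogonality condition.
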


\begin{proof}
Let $DP=(P_a,P_b)$, $c\in P_a$. Let   $DP'=(P_a',P_b)$,  $DP''=(P_a'',P_b)$. Denote by $c'$ and $c''$ the curves of $P_a'$ and 
$P_a''$  obtained from $c$ by flips $f'$ and $f''$ respectively. 
In addition, denote by $S_*$ a subsurface of $S$ composed of two pairs of pants in $P_a$ adjacent to the curve $c$. 

Suppose that $DP'$ is not a parametrizing double pants decomposition at $\tau\in \T$. By definition, this means that 
there exists a 
non-trivial deformation $\xi(\tau)$ of the hyperbolic structure,  where $\xi$ preserves all lengths of curves contained in $(P_a',P_b)$.
This deformation may be described as a set of simultaneous  small twists along the curves of $P_a'$ 
(the rates of the twists need not coincide or to be constant). 

Suppose that $\xi$ contains no twist along $c'$ (i.e. the twist along this curve is trivial, zero). Then
the subsurface $S_*$ is not changed, and the length of the curve $c$ is preserved by $\xi$. Hence, $\xi$ preserves the 
lengths of all curves in $(P_a,P_b)=DP$. By assumption, these lengths provide a local coordinate at $\tau$, 
so the deformation $\xi$ is trivial (does not change the point of Teichm\"uller space).
The contradiction shows that $\xi$ contains a non-trivial twist along $c'$.

On the other hand, consider another deformation $\eta$ of the initial hyperbolic structure $\tau\in \T$, 
where $\eta$ preserves all lengths of curves from $(P_a,P_b)$ except the length of $c$. A locus of points of $\T$ obtained by $\eta$ from $\tau$
is a 1-dimensional curve in a neighborhood of $\tau$. This implies that $\eta=\xi$. 

Suppose now that $DP''$ also is not parametrizing at $\tau$. Similarly to the case of $DP'$,
this implies that there exists a deformation $\psi$ preserving all lengths of curves from $P_a''$ and containing
a non-trivial twist along the curve $c''\in P_a$. Similarly to $\xi$, the deformation $\psi$ should coincide with $\eta$, 
so, $\xi=\psi$. However, these two transformations do not coincide in the subsurface $S_*$: one twists along $c'$,
another along $c''\ne c'$.
The contradiction shows that the double pants decomposition $DP''$ is parametrizing at $\tau$.
  
\end{proof}

\begin{lemma}
\label{step-flip}
Let $DP$ be a locally parametrizing double pants decomposition at $\tau\in \T$. Let $f_0$ be a flip of $DP$
such that the double pants decomposition $DP^{(0)}=f_0(DP)$ contains no double curves. 
Then $DP^{(0)}$ is a parametrizing double pants decomposition  at $\tau\in \T$.
 
\end{lemma}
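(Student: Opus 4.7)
I plan to prove Lemma~\ref{step-flip} by contradiction, combining Lemma~\ref{almost all} with the Dehn-twist family of flips provided by Lemma~\ref{all flips}.

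Suppose $DP^{(0)}=f_0(DP)$ is not parametrizing at $\tau$, let $c\in DP$ be the flipped curve, and let $c^{(0)}$ denote its flip image. Following the argument in the proof of Lemma~\ref{almost all}, the failure produces a non-zero tangent vector $\xi\in T_\tau\T$ which (i) preserves every length in $DP^{(0)}$, (ii) coincides with the intrinsic $1$-dimensional direction $\eta$ preserving every $DP$-length except $l(c)$ (well defined since $DP$ is parametrizing at $\tau$), and (iii) restricts, on the subsurface $S_*$ formed by the two pairs of pants of $P_a$ adjacent to $c$, to a non-trivial infinitesimal Dehn twist along $c^{(0)}$.

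By Lemma~\ref{all flips}, every flip of $c$ in $DP$ replaces $c$ by one of the curves $c^{(k)}=t_c^k(c^{(0)})$, $k\in\Z$, giving a countable family of candidate decompositions $DP^{(k)}:=f_k(DP)$. Since $P_b$ is a finite set of curves, for all but finitely many $k$ the decomposition $DP^{(k)}$ contains no double curves. Applying Lemma~\ref{almost all} to $DP$ and the pair of flips $(f_0,f_k)$ for any such admissible $k\neq 0$, we conclude that $DP^{(k)}$ is parametrizing at $\tau$. Moreover, since $\xi=\eta$ is intrinsic to $DP$, it also preserves every $DP^{(k)}$-length except $l(c^{(k)})$, and so plays the role of the analogous intrinsic direction $\eta^{(k)}$ for $DP^{(k)}$.

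The main obstacle, and the heart of the argument, is to extract the contradiction from the coexistence of the $c^{(0)}$-twist description of $\xi|_{S_*}$ and the parametrizing property of every admissible $DP^{(k)}$. My plan is to reapply the $S_*$-restriction analysis of Lemma~\ref{almost all} at the parametrizing decomposition $DP^{(k)}$: in the Fenchel-Nielsen coordinates based on $P_a^{(k)}$, the expression of $\xi$ as the direction $\eta^{(k)}$ should exhibit a twist-along-$c^{(k)}$ structure on $S_*$, yielding a second twist description of $\xi|_{S_*}$ along $c^{(k)}\ne c^{(0)}$. Since $c^{(0)}$- and $c^{(k)}$-twists are linearly independent vectors in $T(\T(S_*))$, this produces the same incompatibility that was used to close the proof of Lemma~\ref{almost all}. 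The delicate technical point, and the main difficulty, is to rigorously justify both twist descriptions of $\xi|_{S_*}$ from the two Fenchel-Nielsen perspectives and to verify their incompatibility; this relies on a careful intersection-number analysis of the iterated Dehn-twist family $\{t_c^k(c^{(0)})\}$ inside the four-holed sphere $S_*$, combined with the explicit twist-twist independence established in Lemmas~\ref{handle} and~\ref{flip}.
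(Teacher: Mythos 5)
Your high-level setup matches the paper's: argue by contradiction, invoke Lemma~\ref{almost all}, and exploit the fact (from Lemma~\ref{all flips}) that all flips of $c$ inside $DP$ form a single Dehn-twist orbit $\{c^{(k)}=t_c^k(c^{(0)})\}$. The identifications $\xi=\eta=\eta^{(k)}$ are correct and match what the paper does. However, the step you flag as ``the delicate technical point'' is in fact a genuine gap, and the reasoning you sketch to fill it does not work.

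The twist-along-$c'$ description of $\xi|_{S_*}$ in the proof of Lemma~\ref{almost all} is derived from the hypothesis that $\xi$ preserves \emph{every} length in $P_a'$ --- including $l(c')$ --- which lets one write $\xi$ as a combination of infinitesimal Fenchel--Nielsen twists along all curves of $P_a'$. This hypothesis holds precisely because $DP'$ is \emph{not} parametrizing. For a decomposition $DP^{(k)}$ with $k\neq 0$ that \emph{is} parametrizing, the situation is reversed: $\eta^{(k)}=\xi$ cannot preserve $l(c^{(k)})$, since otherwise $\xi$ would annihilate all of $l(DP^{(k)})$ and $DP^{(k)}$ would fail to be parametrizing. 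So $\xi$ is not expressible as a sum of twists along $P_a^{(k)}$, and there is no ``twist-along-$c^{(k)}$ structure'' for $\xi|_{S_*}$ to extract. The second twist description you hope to set against the first does not exist, and the intended contradiction cannot be reached this way.

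The paper closes the argument by a different mechanism. It passes to the tangent planes $\Pi_a^{(i)}$ (tangent at $\tau$ to the level sets of the length functions of $P_a^{(i)}\setminus P_b$), shows using the vectors $\bar b_1,\dots,\bar b_m$ that for all but at most two indices these planes coincide with $\Pi_a$, and then uses the action of the Dehn twist $t_c$ on the family $\{\Pi_a^{(i)}\}_{i\in\Z}$ --- it shifts the index by one --- to conclude that $t_c$ fixes $\Pi_a$ and hence $\Pi_a^{(0)}=\Pi_a$ as well. This equivariance step is the crucial idea your proposal is missing: it is what lets one transfer the parametrizing information from the ``good'' indices $k$ to the putative bad one $k=0$, without ever needing a twist decomposition of $\xi$ with respect to $P_a^{(k)}$.
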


\begin{proof}

Let $DP=(P_a,P_b)$ be a parametrizing double pants decomposition  at $\tau\in \T$. 
Let $c\in DP$ be a curve flipped by $f_0$.
Without loss of generality we may assume that $c\in P_a$. Denote $m=3g-3+n$.

Consider an $m$-dimensional surface $C_a$ through $\tau\in \T$ such that the lengths of all curves contained in $P_a\setminus P_b$ 
are constant in $C_a$. Let $C_b$ be a similar surface for $P_b$. Denote by $\Pi_a$ and $\Pi_b$ the tangent planes to $C_a$ and 
$C_b$ in $\tau$. Let $C_{\partial S}$ be an $n$-dimensional surface through $\tau$ such that all curves contained in $\partial S$ have
constant lengths in $C_{\partial S}$, let $\Pi_{\partial S}$ be the corresponding tangent plane.
Since $DP=(P_a,P_b)$ is  parametrizing at $\tau$,  the planes $ \Pi_a$, $\Pi_b$ and $\Pi_{\partial S}$ intersect each other in $\tau$
only (and span the whole tangent space at $\tau$).

Let $\psi_i$, $i=1,\dots,m$, be the curves in $\T$ on which  all lengths of curves of $DP$ are preserved
except for the length of one curve $b_i\in P_b\setminus P_a$. 
Let $\bar b_1,\dots,\bar b_m$ be the tangent vectors to $\psi_1,\dots,\psi_m$ at $\tau$.
Clearly, the plane $\Pi_a$ is spanned by the vectors  $\bar b_1,\dots,\bar b_m$.

Now, consider a series of flips $f_i$ of the curve $c\in P_a$ (including the flip $f_0$ described in the lemma): 
we will assume that the flip $f_i$ takes $c$ to the curves $c_i$ of the same homology class; moreover, we assume that 
$c_{i+1}$ may be obtained from $c_i$ by a Dehn twist along $c$. For each of the flips $f_i$ we denote $P_a^{(i)}=f_i(P_a)$.
Denote by $\Pi_a^{(i)}$ the tangent planes at $\tau$ to the surfaces of the constant lengths of curves from $P_a^{(i)}\setminus P_b$. 

If the double pants decomposition $DP^{(0)}=(P_a^{(0)},P_b)$ is parametrizing at $\tau$, then there is nothing to prove.
So, suppose that $DP^{(0)}$ is not parametrizing at $\tau$. By Lemma~\ref{almost all}, 
this implies that all other double pants decompositions
$DP^{(i)}=(P_a^{(i)},P_b)$ are parametrizing at $\tau$ (with possible exclusion of at most one decomposition $DP^{(j)}$: 
at most one of these decompositions may contain a double curve $c_i$).
Reasoning as above with $\Pi_a$, we show that the plane $\Pi_a^{(i)}$  is spanned by  $b_1,\dots,b_m$.
This implies that for $i\notin \{0,j\}$ all planes  $\Pi_a^{(i)}$ coincide with  $\Pi_a$.

Now, our aim is to show that  $\Pi_a^{(0)}=\Pi_a$.
Let $t_c$ be a Dehn twist along $c$. The twist $t_c$ takes $c_i$ to $c_{i+1}$. On the other hand, $t_c$ acts on $\T$
and takes $\Pi_a^i$ to $\Pi_a^{i+1}$. Since $\Pi_a^i=\Pi_a$ for $i\notin \{0,j\}$, $t_c$ preserves $\Pi_a$.
Hence, $\Pi_a^i=\Pi_a$ for all $i\in \Z$.     

Since  $\Pi_a^{(0)}=\Pi_a$, the planes $\Pi_a^{(0)}$, $\Pi_b$ and $\Pi_{\partial S}$ span the tangent space at $\tau$, 
which implies that  $DP^{(0)}=(P_a^{(0)},P_b)$ is a parametrizing double pants decomposition at $\tau$.

\end{proof}

\begin{theorem}
\label{local}
Let $DP$ be an admissible double pants decomposition without double curves.
Then $DP$ together with the ordered set of lengths $l(DP)=\{l(c_i)| c_i\in DP \}$ is a local coordinate in $\T\setminus Z(DP')$
for some special double pants decomposition $DP'$.

\end{theorem}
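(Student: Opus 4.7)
The plan is to chain together the three preparatory results from this section and reduce the theorem to the parametrizing property of special decompositions, which has already been established.

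First, I would invoke Lemma~\ref{only flips without double} applied to $DP$: it produces a special double pants decomposition $DP'$ and a sequence of flips $f_1,\dots,f_k$ such that $DP' = f_k\circ\cdots\circ f_1(DP)$ and none of the intermediate decompositions $DP_i := f_i\circ\cdots\circ f_1(DP)$ contains a double curve. Reading this sequence in reverse, I have a chain
\begin{equation*}
DP' = DP_k \xrightarrow{f_k^{-1}} DP_{k-1} \xrightarrow{f_{k-1}^{-1}} \cdots \xrightarrow{f_1^{-1}} DP_0 = DP,
\end{equation*}
where each arrow is itself a flip (flips are involutive as groupoid generators) and no decomposition in the chain contains a double curve.

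Second, I fix a point $\tau \in \T \setminus Z(DP')$. By Lemma~\ref{base-param}(1), the special decomposition $DP'$ is locally parametrizing at $\tau$. Now I run induction along the chain above: supposing $DP_i$ is locally parametrizing at $\tau$, Lemma~\ref{step-flip} applies to the flip $f_i^{-1}$ (whose image $DP_{i-1}$ has no double curves by the choice of the sequence), and concludes that $DP_{i-1}$ is also locally parametrizing at $\tau$. Iterating down to $i=0$ yields that $DP = DP_0$ is locally parametrizing at $\tau$.

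Since $\tau \in \T\setminus Z(DP')$ was arbitrary, this shows that $l(DP)$ is a local coordinate on the whole open set $\T\setminus Z(DP')$, which is exactly the assertion of the theorem. I do not anticipate any real obstacle here: all the geometric and combinatorial work has been done in the previous lemmas, and this step amounts to bookkeeping — the only care needed is to verify that the reversed sequence still consists of flips and that no intermediate decomposition acquires a double curve (both are immediate from Lemma~\ref{only flips without double} and the fact that the flip relation is symmetric), so that Lemma~\ref{step-flip} can be applied at every stage of the induction.
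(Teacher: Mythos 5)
Your proposal is correct and follows essentially the same route as the paper's proof: invoke Lemma~\ref{only flips without double} to produce the special decomposition $DP'$ and a double-curve-free flip path, anchor the base case with Lemma~\ref{base-param}, and propagate the locally-parametrizing property along the (reversed) flip chain via Lemma~\ref{step-flip}. The only difference is that you spell out the reversal of the flip sequence explicitly, which the paper leaves implicit; that is a minor presentational point, not a mathematical one.
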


\begin{proof}
By Lemma~\ref{only flips without double} there  exists a special double pants decomposition $DP'=(P_a',P_b')$,  and a sequence $\psi$
of flips taking $DP$ to $DP'$ and producing no double curves on its way. By Lemma~\ref{base-param} the lengths $l(DP')$ form a local 
coordinates in $\T\setminus Z(DP')$. By Lemma~\ref{step-flip} each of the flips
in the sequence $\psi$ preserve the parametrizing property of double pants decomposition (i.e. the obtained decomposition provides
 a local parameter in  $\T\setminus Z(DP')$). Hence, $DP$ is parametrizing in  $\T\setminus Z(DP')$.

\end{proof}

\section{Finite number of choices}
\label{sec finite}
In Section~\ref{sec local}, we proved that the set of functions $l(DP)$ is a local parameter in almost all points of $\T$.
In this section, we prove that the functions  $l(DP)$  determine the point of $\T$ up to finitely many choices.

Consider an universal covering $\pi$ of $S$ by a hyperbolic plane $\H^2$, so that $S=\H^2/G$ where $G\in SL_2(\R)$ 
is some finitely generated discrete group.
Let $M_1,\dots, M_s$ be a finite set of  matrices generating $G$. Let  $r_1(M_1,\dots,M_s)=\dots =r_n(M_1,\dots,M_s)=E$ be the
defining relations, where $r_i$ is a word in the alphabet $A=\{M_1,M_1^{-1},\dots,M_s,M_s^{-1}\}$. 

For each closed geodesic $c\subset S$ each connected component of the preimage $\pi^{-1}c$ is a line (denote it by 
$L_i(c)$, where integer index stays to emphasize that there are countably many of these preimages).
The group $G$ contains a hyperbolic transformation $\gamma(c)$ shifting $\H^2$ along $L_i(c)$  for the distance equal to $l(c)$.
So, we have 
\begin{equation}
\label{eq}
\tr (\gamma(c))=2 \cosh(l(c)/2). 
\end{equation}

Notice that $\gamma(c)=w(M_1,\dots,M_s)$ for some word $w$  in the same alphabet $A$.
So, the Formula~\ref{eq} may be considered as a finite set of polynomials in matrix elements of $M_1,\dots,M_s$ with coefficients
$$\hat l(c)=2 \cosh(l(c)/2).$$ 

\begin{theorem}
\label{finite}
Let $DP$ be an admissible double pants decomposition containing no double curves. 
Then $l(DP)$ determines a point of $\T$ up to finitely many choices.

\end{theorem}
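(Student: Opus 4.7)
The plan is to convert the statement into an algebraic finiteness problem on the character variety and apply dimension counting. By the setup preceding the theorem, each $c\in DP$ contributes the polynomial trace equation $\tr w_c(M_1,\dots,M_s)=\hat l(c)$ in the matrix entries of the generators of~$G$. Collecting these $N:=6g-6+3n$ equations together with the defining relations $r_j=E$ and quotienting by the $SL_2(\R)$-action by conjugation produces a polynomial system that lives naturally on the affine character variety $X$ of representations $\pi_1(S)\to SL_2(\C)$ modulo conjugation. This variety has complex dimension $N=\dim\T$, each trace $\tau_c([\rho])=\tr\rho(c)$ is a regular function on it, and $\T$ embeds into its real locus. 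The given value of $l(DP)$ then cuts out the affine algebraic subvariety
\begin{equation*}
V=\{[\rho]\in X\mid \tau_{c_i}([\rho])=\hat l(c_i)\text{ for every }c_i\in DP\},
\end{equation*}
and the claim becomes that $V\cap\T$ is finite.

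The next step is to apply Theorem~\ref{local}: the joint differential of the $N$ functions $\tau_{c_i}$ has maximal rank at every point of the dense open subset $\T\setminus Z(DP')\subset\T$. Equivalently, the algebraic morphism $\Phi=(\tau_{c_1},\dots,\tau_{c_N}):X\to\C^N$ is dominant between affine varieties of equal dimension $N$, so its generic fibre is zero-dimensional, hence finite. Moreover, at each $\tau\in V\cap(\T\setminus Z(DP'))$ the trace equations cut transversally, which forces the irreducible component of $V$ through~$\tau$ to have dimension~$0$ (i.e.\ to equal $\{\tau\}$). Because $V$ is affine algebraic, it has only finitely many irreducible components, and therefore $V\cap(\T\setminus Z(DP'))$ is finite.

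The main obstacle is to exclude positive-dimensional components of $V$ meeting $\T$ only inside the exceptional locus $Z(DP')$. By Remark~\ref{limit}, $Z(DP')$ is a finite union of $3g-3+n$ hypersurfaces in $\T$, each corresponding to orthogonality between a pair of conjugate curves of some special decomposition $DP'$. A putative positive-dimensional component $V_0\subset V\cap\T$ would have to lie in one such hypersurface $H\subset Z(DP')$, and I would rule this out by inspecting the trace conditions restricted to~$H$. Using the explicit length formulas that appear in the proofs of Lemmas~\ref{handle} and~\ref{flip}, one checks that the trace functions $\tau_{c_i}$ still cut down the dimension by~$N$ along $H$ (since the full-rank differential on $\T\setminus Z(DP')$ extends, after the transversality analysis of a single orthogonality hyperplane, to a transversality statement on $H$), leaving no room for a positive-dimensional component. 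Combined with the finiteness of $V\cap(\T\setminus Z(DP'))$, this yields that $V\cap\T$ is a finite set, which proves the theorem.
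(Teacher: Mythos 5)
Your argument runs along essentially the same lines as the paper's proof: both approaches encode a hyperbolic structure via a tuple of generators of the Fuchsian group, observe that the relations and the trace identities $\tr\gamma(c_i)=2\cosh(l(c_i)/2)$ give a system of polynomial equations with $\hat l(DP)$ as the only free constants, and then invoke Theorem~\ref{local} to argue that the system is non-degenerate and hence has finitely many solutions. Your phrasing in terms of the $SL_2$-character variety and a dominant morphism $\Phi:X\to\C^N$ between affine varieties of equal dimension is a tidier and more standard way to package what the paper does directly with matrix entries, but the substance is the same.

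The substantive difference is your third paragraph, where you explicitly flag the possibility of a positive-dimensional component of the fiber $V\cap\T$ hiding inside the exceptional locus $Z(DP')$. This is a real issue: ``generically zero-dimensional fiber'' does not imply ``every solvable fiber is finite,'' and the given tuple of lengths need not be generic. The paper's own write-up simply asserts ``Theorem~\ref{local} implies that the system is non-degenerate, thus there are finitely many solutions,'' so you have correctly spotted a logical step that is at best underexplained. Unfortunately your proposed repair does not close the gap. The sentence ``one checks that the trace functions $\tau_{c_i}$ still cut down the dimension by $N$ along $H$\dots leaving no room for a positive-dimensional component'' is a claim, not an argument. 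Lemmas~\ref{handle} and~\ref{flip} establish monotonicity of one length against one Fenchel--Nielsen angle in a rank-one situation; they say nothing directly about the joint rank of the $N$ length differentials restricted to a hypersurface $H\subset Z(DP')$, where by definition at least one of those differentials has become dependent on the others. To actually rule out a positive-dimensional component $W\subset H\cap V$ you would need something more, for instance a change of marking that produces an alternative parametrizing decomposition $DP_\tau$ whose exceptional locus misses a point of $W$, together with an argument that the level set of the original $l(DP)$ still becomes discrete there, or an independent compactness/properness input. As written, the proof proposal recognizes the difficulty but does not resolve it, so the final step remains a genuine gap.
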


\begin{proof}
For each of the curves $c_i\in DP$ we consider one of its preimages on $\H^2$ together with the hyperbolic transformation 
$\gamma(c_i)$. 
Taking in account Formula~\ref{eq}, we obtain a system of polynomial equations in elements of $M_i$: the system consists of the equations
arising from the following three sources:

\begin{itemize}
\item[(1)] $M_i\in SL_2(\R)$;
\item[(2)] $r_j(M_1,\dots,M_s)=E$, where $r_k$ is one of the defining relations;
\item[(3)] $\tr (\gamma(c))=\hat l(c)$. 
\end{itemize}

The matrix equations of the second type are considered as four scalar equations in matrix elements.
Notice, that the equations of all three types are polynomial (here we use the fact that  $M_i\in SL_2(\R)$, and hence, all elements
of $M_i^{-1}$ are also elements of $M_i$).
So, the three types compose a system of finitely many polynomial equations in matrix elements of $M_i$ with integer coefficients 
and constant terms in $\Z\cup \{\hat l(c_1)),\dots,\hat l(c_m)\}$.  Suppose in addition that 
the values  of  $(\hat l(c_1)),\dots,\hat l(c_m))$ correspond to at least one  hyperbolic structure $\tau\in \T$ on $S$. 
Then the system of equations is solvable. On  the other hand, Theorem~\ref{local} implies that the system is non-generate.
Thus, there are finitely many solutions of this system. 

In other words, for each set of values $l(DP)$  we can write a unique set of values   
$\hat l(DP)=\{ \hat l(c_1)),\dots,\hat l(c_m)\}$; for this set $\hat l(DP)$ there are
finitely many possible values of matrix elements of $M_i$. So, for each value of $l(DP)$ 
there are finitely many distinct points in $\T$. 

\end{proof}

\begin{cor}
\label{alg}
Let $DP$ be an admissible double pants decomposition of $S$ without double curves. 
Let $c\in S$ be a closed curve. Then $\hat l(c)$ is an algebraic function of $\hat l(DP)$.  

\end{cor}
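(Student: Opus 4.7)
The plan is to reuse the polynomial system constructed in the proof of Theorem~\ref{finite} and enlarge it by a single equation for the curve $c$. Recall that the system records (1) the conditions $M_i \in SL_2(\R)$, (2) the defining relations $r_j(M_1,\dots,M_s)=E$ of the Fuchsian group $G$, and (3) the trace relations $\tr(\gamma(c_i)) = \hat l(c_i)$ for every $c_i\in DP$. These are polynomial equations in the matrix entries of $M_1,\dots,M_s$, with constant terms in $\Z[\hat l(DP)]$. Now append the further equation
\[
\tr\bigl(w_c(M_1,\dots,M_s)\bigr) \;=\; \hat l(c),
\]
where $w_c$ is any word in the alphabet $A$ representing the hyperbolic element $\gamma(c)\in G$ associated with the curve $c$. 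This yields a larger polynomial system in the unknowns $(M_1,\dots,M_s,\hat l(c))$ with coefficient ring $\Z[\hat l(DP)]$.

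Next I would translate the conclusion of Theorem~\ref{finite} into an algebraic-geometric statement. Let $V\subset \mathbb A^N\times \mathbb A^1$ (coordinates: matrix entries and $\hat l(c)$) be the affine variety cut out by the system above, and let $\pi\colon V\to \mathbb A^m$, $(M_\ast,\hat l(c))\mapsto \hat l(DP)$, be the projection to parameter space. Theorem~\ref{finite} asserts that the fibers of the map $V\to \mathbb A^m$ forgetting $\hat l(c)$ are finite over every $\hat l(DP)$ coming from an actual hyperbolic structure; since $\hat l(c)$ is a polynomial function of matrix entries, the fibers of $\pi$ itself are also finite on a Zariski-dense subset of $\mathbb A^m$. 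Equivalently, the composition of $\pi$ with the coordinate $\hat l(c)$ is a quasi-finite morphism on the relevant component.

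Now I would extract the algebraic relation by elimination theory: successively eliminating the matrix entries of $M_1,\dots,M_s$ from the full system (for instance by iterated resultants, or by computing the elimination ideal in $\Z[\hat l(DP),\hat l(c)]$) produces a polynomial $P(\hat l(DP),\hat l(c))\in \Z[\hat l(DP),\hat l(c)]$ vanishing on the image of $V$. This immediately gives an algebraic dependence $P(\hat l(DP),\hat l(c))=0$, establishing that $\hat l(c)$ is an algebraic function of $\hat l(DP)$.

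The one point requiring care is that the eliminated polynomial $P$ is \emph{non-trivial} in the variable $\hat l(c)$. I expect this to be the main obstacle. Its non-triviality amounts to the statement that the projection $\pi$ has finite fibers on a set of $\hat l(DP)$-values with non-empty interior in $\mathbb A^m$; were $P$ identically zero in $\hat l(c)$, some fiber of $\pi$ would be positive-dimensional, producing a continuous one-parameter family of matrix representations (hence of points in $\T$) over a fixed $\hat l(DP)$, in direct contradiction with Theorem~\ref{finite}. Combining this with Theorem~\ref{local} (which guarantees that $\hat l(DP)$ varies in an open set of $\R^{6g-6+3n}$) ensures that $P$ is a genuine algebraic relation between $\hat l(c)$ and the coordinates $\hat l(DP)$, completing the proof.
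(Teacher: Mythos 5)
Your proof is correct and takes essentially the same route as the paper: append the trace equation for $\gamma(c)$ to the polynomial system built in Theorem~\ref{finite}, then use the finiteness of solutions to conclude that the matrix entries (and hence $\hat l(c) = \tr\gamma(c)$) are algebraic over $\hat l(DP)$. The paper phrases this as ``solutions of a polynomial system are algebraic functions of the parameters,'' while you make the elimination-theoretic content explicit and take the extra care of checking that the eliminated polynomial is nontrivial in $\hat l(c)$ --- a point the paper leaves implicit but which is the same finiteness observation in disguise.
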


\begin{proof}
By Theorem~\ref{finite} the value of $l(DP)$ determines the point of $\T$ up to finitely many choices. 
Each of these choices correspond to a unique (modulo conjugation) discrete subgroup  $G\in SL_2(\R)$ acting on $\H^2$.
Consider the group $G$ for one of these possibilities.  

Following the proof of Theorem~\ref{finite} consider a preimage $L(c)$ of $c$ in $\H^2$ and a hyperbolic transformation $\gamma(c)$
which shifts along $L(c)$ by the distance $l(c)$. Then $\gamma(c)=w(M_1,\dots,M_s)$ where $w$ is a word in the alphabet 
$\{M_i,M_i^{-1}| i=1,\dots,s\}$. So, $\hat l(c)=\tr \gamma(c)$ is a polynomial in the matrix elements of $M_1,\dots,M_S$.
Since the elements of matrices $M_i$ are the solution of a system of polynomial equations, these elements are algebraic functions
of $\hat l(DP)$. This implies, that $\hat l(c)$ is an algebraic function of $\hat l(DP)$ either.     

\end{proof}

\section{An atlas on the Teichm\"uller space}
\label{atlas}

In Section~\ref{sec flips}, we proved that for each admissible double pants decomposition $DP$ the function $l(DP)$ provides 
a local coordinate in neighborhoods of almost all points in $\T$ (more precisely, away from a set of measure 0 formed by a finite union 
of hypersurfaces). 
In this section, we show that the coordinate charts with coordinates $l(DP)$ compose an atlas on $\T$. Moreover, the transition functions
between the adjacent chart change exactly one coordinate (and correspond to flips and handle twists of double pants decompositions).

\begin{lemma}
\label{twist helps}
Let $S$ be a surface with a fixed hyperbolic structure. Let  $DP=(P_a,P_b)$ be a special double pants decomposition
 with a standard part $P_b$.
Let $a_i,b_i\in DP$ be a pair of conjugate curves in $DP$. Let $b_j\in P_b$ be a curve such that $b_j\cap a_i\ne \emptyset$ 
and 
let $t_{b_j}$ be a Dehn twist along $b_j$. 
If $a_i$ is orthogonal to $b_i$ then $t_{b_j}^{k}(a_i)$ is not orthogonal to $b_i$ for all $k\in \Z\setminus 0$.

\end{lemma}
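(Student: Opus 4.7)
The plan is to reduce the lemma to the uniqueness of the orthogonality angle supplied by Remarks~\ref{limit1} and~\ref{limit2}, after first pinning down which curve of $P_b$ can play the role of $b_j$. In a special double pants decomposition, the conjugate curve $a_i$ is contained in a distinguished subsurface $S_*\subset S$: a handle $\h\cong S_{1,1}$ if $(a_i,b_i)$ is handle-conjugate, or the four-holed sphere obtained by joining the two pants of $P_b$ adjacent to $b_i$ if $(a_i,b_i)$ is flip-conjugate. In both cases $\partial S_*$ consists of curves from $P_b\cup\partial S$ that are disjoint from $a_i$, and the only non-boundary $P_b$-curve lying in the interior of $S_*$ is $b_i$ itself. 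Since every other curve of $P_b$ is disjoint from $S_*$, the hypothesis $b_j\cap a_i\neq\emptyset$ forces $b_j=b_i$, so only Dehn twists along $b_i$ are in play.

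Next I would pass to Fenchel--Nielsen coordinates $FN(P_b)$. The Dehn twist $t_{b_i}$ preserves every length parameter and every angle parameter except $\alpha(b_i)$, which it shifts by $2\pi$; hence $\alpha(b_i)(t_{b_i}^{-k}\tau)=\alpha(b_i)(\tau)-2\pi k$, while every other Fenchel--Nielsen coordinate agrees at $\tau$ and at $t_{b_i}^{-k}\tau$. Because the mapping class group acts on $\T$ by pullback and $t_{b_i}^{k}$ fixes $b_i$ setwise, intersection angles satisfy $\angle_\tau(t_{b_i}^{k}(a_i),b_i)=\angle_{t_{b_i}^{-k}\tau}(a_i,b_i)$; in particular, $t_{b_i}^{k}(a_i)\perp b_i$ at $\tau$ if and only if $a_i\perp b_i$ at $t_{b_i}^{-k}\tau$.

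To finish, I would invoke Remark~\ref{limit1} inside $\h$ in the handle-conjugate case and Remark~\ref{limit2} inside $S_{0,4}$ in the flip-conjugate case: once the length $l(b_i)$ and the lengths of $\partial S_*$ are fixed, the orthogonality locus $\{a_i\perp b_i\}$ is cut out by a single equation $\alpha(b_i)=\alpha_0$, with $\alpha_0$ a function of those lengths alone. All these lengths are invariant under $t_{b_i}$, so the same $\alpha_0$ governs orthogonality at $\tau$ and at $t_{b_i}^{-k}\tau$. Thus, if $a_i\perp b_i$ at $\tau$ then $\alpha(b_i)(\tau)=\alpha_0$, whereas $\alpha(b_i)(t_{b_i}^{-k}\tau)=\alpha_0-2\pi k\neq\alpha_0$ for every $k\in\Z\setminus 0$, giving $a_i\not\perp b_i$ at $t_{b_i}^{-k}\tau$ and hence $t_{b_i}^{k}(a_i)\not\perp b_i$ at $\tau$. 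The only delicate step is the initial identification $b_j=b_i$ from the combinatorics of the special decomposition; once this is settled, the proof is an immediate consequence of the uniqueness of the orthogonality angle already recorded in Remarks~\ref{limit1} and~\ref{limit2}.
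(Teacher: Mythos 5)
Your proof hinges on the claim that $b_j\cap a_i\neq\emptyset$ forces $b_j=b_i$, and this is where it breaks down. You argue that $a_i$ is contained in the four-holed sphere (or handle) obtained by joining the two pants of $P_b$ adjacent to $b_i$. But in a special decomposition the flip $f_i$ producing $a_i$ is performed inside the \emph{current} decomposition $P_a^{(i-1)}=f_{i-1}\circ\dots\circ f_1(P_a^0)$, not inside $P_b$. The subsurface $S_*$ in which $a_i$ lives is bounded by curves of $P_a^{(i-1)}$; once earlier flips have replaced some $b_{i'}$ by $a_{i'}$, those boundary curves are no longer curves of $P_b$, and $S_*$ can overlap several pants of $P_b$. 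Consequently $a_i$ may very well intersect curves $b_j\in P_b$ with $j\neq i$, and the reduction to $b_j=b_i$ does not go through. (For handle-conjugate pairs the containment $a_i\subset\h$ does hold and your argument is fine, but that is precisely the easy half.)

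This is not a cosmetic gap: the paper's own proof opens by disposing of the case $i=j$ in one sentence (exactly your Remarks~\ref{limit1}/\ref{limit2} observation) and then spends the rest of the argument on $i\neq j$. There it first rules out handle-conjugacy (so $(a_i,b_i)$ is flip-conjugate and $b_i$ is separating), and then derives a contradiction from hyperbolic geometry: if both $a_i$ and $t_{b_j}^k(a_i)$ met $b_i$ orthogonally, their restrictions $s,s'$ to the component $S_2$ of $S\setminus b_i$ not containing $b_j$ would, together with arcs of $b_i$, bound either a hyperbolic triangle with two right angles or a right-angled quadrilateral, both impossible by Gauss--Bonnet. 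You would need to supply an argument of this kind for the genuinely new case $i\neq j$; the uniqueness of the orthogonality angle inside a fixed $S_{1,1}$ or $S_{0,4}$ does not apply, because $b_j$ sits outside that subsurface and the twist $t_{b_j}$ is not visible in those local Fenchel--Nielsen coordinates.
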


\begin{proof}
First, notice that if $i=j$ than there is nothing to prove (the statement follows than from Lemma~\ref{handle} in case of 
handle-conjugate curves and from Lemma~\ref{flip} in case of flip-conjugate curves).
From now on we assume $i\ne j$.

Notice that by construction of special decompositions, the condition  $b_j\cap a_i\ne 0$ implies that $(a_i,b_i)$ can not be a pair of
handle-conjugate curves.  So,
 $(a_i,b_i)$ is a  pair of flip-conjugate curves, and the curve $b_i$ is homologically trivial. 
Suppose $b_i$ is orthogonal to $a_i$ as well as to $t_{b_j}^{k}(a_i)$, where $k\ne 0$.
Since $b_i$ is homologically trivial,  $b_i$ cuts $S$ into two connected components $S_1$ and $S_2$.
Let $S_1$ be the component containing the curve $b_j$. Denote  $s=a_i\cap S_2$ and  $s'=t_{b_j}^{k}(a_i)\cap S_2$.  
In view of Lemma~\ref{flip} all ends of $s$ and $s'$ are orthogonal to $b_i$. 

Since $b_j\in S_1$, and $b_j\cap b_i=\emptyset$, the topology of the decomposition of $S_2$ is not changed by $t_{b_j}$ 
(however, geometrically $s\ne s'$). This implies that 
there exists an isotopy $\gamma_x$ of $s$ to $s'$ (where $x\in [0,1]$, $\gamma_0=s, \gamma_1=s'$)
such that the ends of the segment $\gamma_x(s)$ belong to $b_i$. 
Notice that $s$ can not intersect $s'$, otherwise the segments $s,s'$ and a part of $b _j$ bound a hyperbolic triangle with two right 
angles $b_js$ and $b_js'$, which is impossible. On the other hand, if $s\cap s'=\emptyset$ then two parts of $b_j$, $s$ and $s'$ bound 
a hyperbolic quadrilateral with four right angles, which is also impossible. The contradiction shows the lemma.

\end{proof}

\begin{lemma}
\label{cover}
For each point $\tau\in \T$ there exists a double pants decomposition $DP_{\tau}$ such that $l(DP_{\tau})$ is a local coordinate in a 
neighborhood of $\tau$.

\end{lemma}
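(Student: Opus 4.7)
The plan is to produce, for any $\tau\in\T$, a special double pants decomposition $DP$ with $\tau\notin Z(DP)$; by Lemma~\ref{base-param} such a $DP$ then supplies a local coordinate chart at $\tau$. I would start from any special $DP=(P_a,P_b)$, which exists by Lemma~\ref{special}. By Lemma~\ref{base-param} its bad locus $Z(DP)$ is the union of the $3g-3+n$ hypersurfaces $H_i=\{a_i\perp b_i\}$ indexed by the conjugate pairs of $DP$. If $\tau\notin Z(DP)$ we are done; otherwise let $I=\{i\mid \tau\in H_i\}$.

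For each $i\in I$ I would replace $a_i$ by $a_i':=t_{b_i}^{k_i}(a_i)$ for some $k_i\ne 0$. In the flip-conjugate case, Lemma~\ref{all flips} shows that $a_i'$ is again a flip of $b_i$ in the $4$-holed subsurface $S_{0,4}$ formed by the two pairs of pants of $P_a$ adjacent to $a_i$, so $(a_i',b_i)$ remains a flip-conjugate pair; in the handle-conjugate case, $a_i'$ stays inside the handle $\h_i$ and meets $b_i$ in a single point, giving a new handle-conjugate pair. The crucial combinatorial feature I rely on is that in a special decomposition each $b_i$ belonging to a conjugate pair is confined to the local subsurface (the $4$-holed sphere around $a_i$, or the handle $\h_i$) and is disjoint from the other curves of $P_a$. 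The twists $t_{b_i}^{k_i}$ for different $i\in I$ therefore act in disjoint subsurfaces, commute with one another, and produce a bona fide special double pants decomposition $DP'$ without double curves, whose conjugate pairs are those of $DP$ for $j\notin I$ and $(a_i',b_i)$ for $i\in I$. Applying Lemma~\ref{twist helps} with $j=i$ (which reduces directly to Lemma~\ref{handle} in the handle case and to Lemma~\ref{flip} in the flip case) yields $a_i'\not\perp b_i$ at $\tau$ for every $i\in I$ and every $k_i\ne 0$; the pairs with $j\notin I$ are unchanged and satisfy $a_j\not\perp b_j$ at $\tau$ by definition of $I$. Hence $\tau\notin Z(DP')$, and Lemma~\ref{base-param} finishes the proof.

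The main obstacle is the combinatorial localization used in the previous paragraph: one must verify that in the final $P_a$ each curve $b_i$ of a conjugate pair lies inside its adjacent $4$-holed sphere (or handle $\h_i$) and is disjoint from $P_a\setminus\{a_i\}$, so that the Dehn twist $t_{b_i}^{k_i}$ really only moves $a_i$. This does not follow formally from the flip-sequence definition of a special decomposition and requires either a careful choice of the underlying strictly standard decomposition and its flip sequence $f_1,\dots,f_m$, or an inductive tracking of each $b_i$ through the flips. Once this localization is in place, admissibility and the Lagrangian general-position condition for $DP'$ are automatic, since each $t_{b_i}^{k_i}$ is a modular-group element fixing every curve of $P_b$ and every curve of $P_a\setminus\{a_i\}$.
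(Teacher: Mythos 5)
Your overall strategy---twist the offending $a_i$ along $b_i$ so that the orthogonal conjugate pairs disappear, then apply Lemma~\ref{base-param}---is the same as the paper's, but your implementation hinges on the localization claim that you yourself flag as unproven, and this is exactly where the difficulty lies. In a general special double pants decomposition $DP=(P_a,P_b)$, the flip $f_j$ in the defining sequence only requires the new curve $a_j$ to be disjoint from the curves of $P_a$ \emph{at the time of the flip}; nothing prevents $a_j$ from meeting a $P_b$-curve $b_i$ that was flipped out of $P_a$ earlier. Hence $b_i\cap a_j\ne\emptyset$ is genuinely possible for $j\ne i$, and then $t_{b_i}^{k_i}$ moves $a_j$ as well as $a_i$. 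Your assertions that the twists $t_{b_i}^{k_i}$ act in disjoint subsurfaces, commute, and leave all pairs with $j\notin I$ untouched do not hold in general, and without them you cannot conclude $\tau\notin Z(DP')$, since a moved $a_j$ could become orthogonal to $b_j$.

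The paper's proof is built precisely to withstand this. It applies a \emph{single} twist $t_{b_i}^{k}$ at a time and invokes Lemma~\ref{twist helps} with $j\ne i$---the case you never use---to control the non-local interactions: from it one deduces (by applying the lemma to the twisted decomposition and reversing the twist) that for each pair $(a_j,b_j)$ with $b_j\cap a_i\ne\emptyset$, at most one exponent $k$ can produce $t_{b_i}^k(a_j)\perp b_j$. Thus for all but finitely many $k\ne 0$ the twist kills the orthogonality in the pair $(a_i,b_i)$ and introduces no new orthogonalities, so the number of orthogonal conjugate pairs strictly decreases; one then iterates. Your parallel-twisting scheme could perhaps be rescued by first constructing a special decomposition enjoying the localization property and starting from it (compare Lemma~\ref{strat}, which does something of this kind in the context of the compactification), but as written the gap you identify is real and substantive: it is not a technicality but precisely the phenomenon Lemma~\ref{twist helps} with $j\ne i$ was formulated to handle.
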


\begin{proof}
Consider an arbitrary special double pants decomposition $DP=(P_a,P_b)$ with a standard part $P_b$. 
By Theorem~\ref{local}, $l(DP)$ is a local coordinate in $\T\setminus Z(DP)$.  
So, if  $\tau\notin Z(DP)$ then there is nothing to prove. 
Suppose that  $\tau\in Z(DP)$, i.e. there exists an orthogonal  conjugate pair of curves $a_i,b_i\in DP$,
(a pair of conjugate curves such that $a_i$ is orthogonal to $b_i$ in $\tau$). 
We will apply to $DP$ a twist $t_{b_j}$ in some of the curves $b_j\in DP$ in order to reduce the number of 
orthogonal conjugate pairs. 

To see that it is always possible,  suppose that  $a_i,b_i\in DP$ is an orthogonal  conjugate pair. 
In this case there exists an integer $k$ such that the special decomposition $t_{b_i}^k(DP)$ 
contains less orthogonal conjugate pairs than $DP$ has (the pair $a_i,b_i$ of this twisted decomposition is not orthogonal for each 
$k\ne 0$, Lemma~\ref{twist helps} implies that for all but finitely many values of $k$ the $k$-th degree of the twist will not produce 
new orthogonalities for other conjugate pairs). 

\end{proof}

Lemma~\ref{cover} shows that the charts with coordinates $l(DP)$ cover the space $\T$. 
Now, we consider the transition functions between the charts. In view of Theorem~\ref{trans}, 
it is natural to choose these transition functions as ones induced by flips and handle-twists of admissible double pants decompositions. 

The case of flip is considered in Lemma~\ref{step-flip}: it is shown that as long as a flip $f$ produces no double curves, $f$ preserves
the locus of points where the set of functions $l(DP)$ is a local coordinate. We have also shown in Lemma~\ref{only flips without double} 
that if $DP$ and $DP'$ are two double pants decompositions containing no double curves and $DP$ can be turned into $DP'$ by a 
sequence of flips, than one can choose this sequence of flips so that no double curves are produced on the way.

It is impossible to treat handle-twists directly in the same way:
by definition no  handle-twist can be applied to a double pants decomposition containing no double curves. To overcome this obstacle,
we introduce the notion of a {\it quasi-handle-twist}.

\begin{definition}[{{\it Quasi-handle-twist}}]
\label{quasi}
Let $DP$ be a double pants decomposition without double curves. Let $c\in DP$ be a curve such that there exists a flip $f(c)$ 
producing a handle $\h$ in the decomposition $f(DP)$ (so that $f(c)$ is a double curve which cuts out the handle).
Let $a\in DP\cap f(DP)$  be a curve contained in the handle $\h$. By a {\it quasi-handle-twist} $t_a$ of $DP$ we 
mean a Dehn twist along $a$.

\end{definition}

\begin{remark}
\label{rem twist}
The quasi-handle-twist $t_a$ may be written as $t_a=f^{-1}\circ \hat t_a \circ f$, where $f$ is a flip as in Definition~\ref{quasi} and
$\hat t_a$ is a handle twist in the handle $\h$.

\end{remark}

\begin{remark}
\label{rem twist of T}
%
Since $t_a$ is a Dehn twist, $t_a$ acts on the Teichm\"uller space $\T$. 
We denote by $t_a(\tau)$ the point of $\T$ obtained from $\tau$ by the Dehn twist $t_a$.

\end{remark}

Now, we will prove the counterparts to the Lemmas~\ref{step-flip} and~\ref{only flips without double} 
for the case of quasi-handle-twists.

The next Lemma follows immediately from Definition~\ref{quasi} and Remarks~\ref{rem twist} and~\ref{rem twist of T}.

\begin{lemma}
\label{quasi-twist}
Let $DP$ be an admissible double pants decomposition without double curves. Let $\tau\in\T$ be a point such that $l(DP)$ is a local 
coordinate in $\tau$. Let $t$ be a quasi-handle-twist along the curve $c\in DP$. Then  
$l(t(DP))$ is a local coordinate in $\tau'=t(\tau)$.

\end{lemma}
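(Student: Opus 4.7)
The plan is to reduce the statement to the general fact that lengths of curves transform equivariantly under the mapping class group action on Teichm\"uller space. The quasi-handle-twist $t=t_a$ is by definition a Dehn twist along the simple closed curve $a\subset S$, hence an element of $Mod(S)$; as such it acts on $\T$ as a real-analytic diffeomorphism. This is the structural input that makes the lemma essentially formal.

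First, I would record the equivariance of length functions. By construction of the $Mod(S)$-action on the Teichm\"uller space, for any simple closed curve $c\subset S$ and any $\tau\in\T$ one has
\[
 l_{t_a(\tau)}(t_a(c))=l_\tau(c).
\]
Applied coordinate-by-coordinate to the curves of $DP=\{c_1,\dots,c_{6g-6+3n}\}$, this identity says that the tuple of functions $l(t_a(DP))$ on $\T$ is obtained from the tuple $l(DP)$ by pre-composition with the diffeomorphism $t_a^{-1}:\T\to\T$; in formulas, $l(t_a(DP))=l(DP)\circ t_a^{-1}$ as maps from $\T$ to $\R^{6g-6+3n}$.

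Second, I would translate this into the local-coordinate statement. By hypothesis $l(DP)$ is a local homeomorphism from some neighbourhood of $\tau$ onto a neighbourhood of $l(DP)(\tau)\in\R^{6g-6+3n}$. Since $t_a^{-1}$ is a diffeomorphism of $\T$ sending $\tau'=t_a(\tau)$ to $\tau$, the composition $l(DP)\circ t_a^{-1}$ is a local homeomorphism from a neighbourhood of $\tau'$ onto the same neighbourhood of $l(DP)(\tau)=l(t_a(DP))(\tau')$. By the identity above, this composition is precisely $l(t_a(DP))$, so $l(t_a(DP))$ is a local coordinate at $\tau'$, as required.

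Since the argument is a direct application of the $Mod(S)$-equivariance, there is essentially no difficulty; the only point requiring care is to fix conventions so that the equivariance identity $l_{t_a(\tau)}(t_a(c))=l_\tau(c)$ is stated correctly, which is a matter of tracking how the marking is modified by $t_a$. One may alternatively derive the lemma from the decomposition $t_a=f^{-1}\circ\hat t_a\circ f$ of Remark~\ref{rem twist}, but this route is awkward because the intermediate decomposition $f(DP)$ contains a double curve and therefore falls outside the hypotheses of Lemma~\ref{step-flip}; the equivariance argument avoids this issue altogether and handles the case of an arbitrary Dehn twist uniformly.
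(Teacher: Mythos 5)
Your proof is correct and is essentially the argument the paper has in mind: the paper asserts the lemma ``follows immediately'' from Definition~\ref{quasi} and Remarks~\ref{rem twist} and~\ref{rem twist of T}, i.e.\ from the fact that a quasi-handle-twist is a Dehn twist and hence acts on $\T$, and your write-up simply makes the underlying equivariance $l_{t_a(\tau)}(t_a(c))=l_\tau(c)$ explicit. Your closing observation that the route via $t_a=f^{-1}\circ\hat t_a\circ f$ would be awkward (because $f(DP)$ has a double curve) is accurate, and it explains why the paper cites Remark~\ref{rem twist of T} rather than Lemma~\ref{step-flip} here.
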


\begin{lemma}
\label{chart-trans}
Let $DP$ and $DP'$ be two admissible double pants decompositions containing no double curves.
Then there exists a sequence of flips and quasi-handle-twists which takes $DP$ to $DP'$ and produces no double curves on its way. 

\end{lemma}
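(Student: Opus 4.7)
The plan is to start with the sequence of flips and handle-twists provided by Theorem~\ref{trans} and to convert it into a sequence of flips and quasi-handle-twists whose intermediate decompositions are all free of double curves. First, by Lemma~\ref{only flips without double}, I may reduce both $DP$ and $DP'$ by flips alone (already avoiding double curves) to special decompositions, so it suffices to treat the case in which the endpoints are both special; Theorem~\ref{trans} then provides a sequence $\sigma_1,\dots,\sigma_n$ of flips and handle-twists connecting them.

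Next I would eliminate the handle-twists one at a time. A handle-twist $\hat t_a$ occurring at step $i$ requires the decomposition at that moment to contain a double curve $c$, and since $DP$ has no double curves this $c$ was introduced at some earlier step by a flip $\sigma_j$. The crucial identity is Remark~\ref{rem twist}, which rearranges into $\hat t_a\circ f = f\circ t_a$, where $f$ is the flip producing the handle and $t_a$ is the corresponding quasi-handle-twist. Using that flips of the $P_a$-part commute with flips of the $P_b$-part (exactly as exploited in the proof of Lemma~\ref{without double}) and that $\hat t_a$ commutes with any flip whose support is disjoint from the handle, I would shift $\sigma_j$ along the sequence until it lies immediately before $\sigma_i=\hat t_a$, then replace the pair $(\sigma_j,\sigma_i)=(f,\hat t_a)$ by $(t_a,f)$. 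This substitution turns the handle-twist into a quasi-handle-twist applied to a decomposition without double curves, and defers the appearance of $c$ by one step; iterating the shift of $f$ through subsequent moves, it eventually meets the later flip that removes $c$ and cancels with it, so the double curve disappears altogether. Carrying this out for every handle-twist in the sequence produces a sequence involving only flips and quasi-handle-twists.

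Finally, such a sequence may still contain flips that produce double curves at intermediate steps. These I would dispose of by the construction used in Lemma~\ref{without double}: apply auxiliary flips to the curves of $P_a$ to move them outside the finite set of curves occurring in the sequence, replay the sequence of $P_b$-flips, apply auxiliary flips on the $P_b$-side, and undo. Quasi-handle-twists are Dehn twists along specific curves, so they commute with this reshuffling as long as the twisting curves are disjoint from the auxiliary flips, a condition that can be arranged by choosing the auxiliary flips appropriately.

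The main obstacle is the rearrangement step in the second paragraph: one must verify that the successive commutations of $f$ past flips and past handle-twists can be carried out globally along the sequence, that each resulting intermediate decomposition still has no unintended double curves, and that the identity $\hat t_a\circ f=f\circ t_a$ keeps applying after the shift. This is a delicate combinatorial bookkeeping argument, closely modelled on the proof of Lemma~\ref{without double}, and it carries the technical weight of the proof.
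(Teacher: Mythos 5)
Your overall plan shares the paper's starting point (Theorem~\ref{trans}), but then diverges: the paper's proof crucially invokes the stronger fact, recorded as~\cite[Lemma 4.1]{FN}, that the generating sequence of Theorem~\ref{trans} may be taken so that \emph{every handle-twist is applied at a standard decomposition}. This lets the paper split $\psi$ at each handle-twist $\hat t_c$ occurring at a standard $DP^{st}=(P_a^{st},P_b^{st})$, route $DP$ to a \emph{special} decomposition $DP^{sp}$ sharing the same $P_b$-part via Lemma~\ref{without double}, replace $\hat t_c$ by the quasi-handle-twist $t_c$ (using Remark~\ref{rem twist}), and continue. Because specials have no double curves, each segment is realizable without double curves, and the whole thing is an easy induction on the number of handle-twists. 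You never use this normalization, so you are forced into the global rewriting argument of your second paragraph.

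That rewriting argument has a genuine gap. Two specific problems. First, the commutation step that is supposed to slide $\sigma_j$ (the flip creating the double curve $c$) next to $\sigma_i=\hat t_a$: the flips between $\sigma_j$ and $\sigma_i$ need not be disjoint from the support of $\sigma_j$ or from the handle $\h$, and ``flips of $P_a$ commute with flips of $P_b$'' does not cover flips in the same part acting on overlapping supports. There may in fact be other handle-twists in the same handle region between $\sigma_j$ and $\sigma_i$, and those do not commute past $\sigma_j$ at all. Second, and more seriously, the claim that after the substitution the shifted flip $f$ ``eventually meets the later flip that removes $c$ and cancels with it'' is not justified and is generally false: $c$ is a double curve, so it can be removed either by a flip of $c$ in the $P_a$-part or by a flip of $c$ in the $P_b$-part, and in neither case does that flip need to be $f^{-1}$. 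So you cannot conclude the double curve disappears without further argument. Unless you import the normalization from \cite[Lemma 4.1]{FN}, these obstructions are real and your bookkeeping as stated does not close.

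In short: your reduction of the endpoints to special decompositions and your use of Lemma~\ref{without double} for the handle-twist-free blocks are fine, and your use of Remark~\ref{rem twist} in the form $\hat t_a\circ f = f\circ t_a$ is correct. What is missing is the fact that the handle-twists themselves may be assumed to occur at standard decompositions; with that, the local commutation gymnastics become unnecessary and the proof collapses to a short induction.
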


\begin{proof}
By Theorem~\ref{trans} there exists a sequence $\psi$ of flips and handle-twists taking $DP$ to $DP'$. In view of 
Lemma~\ref{without double}, each subsequence containing no handle-twist may be realized without producing double curves.
It is sufficient to prove the lemma for the case when $\psi$ contains one handle-twist only (and then apply inductional reasoning).
Suppose that this unique handle-twist $\hat t_c$ is a twist in a curve $c\in DP^{st}$ where $DP^{st}$ is a standard double pants 
decomposition flip-equivalent to $DP$ (it is shown in~\cite[Lemma 4.1]{FN} that  handle-twists in standard decompositions are 
sufficient for obtaining the transitivity theorem). 

Let $DP^{st}=(P_{a}^{st},P_{b}^{st})$ be the two parts, suppose that $c\in P_{a}^{st}$. Let $DP^{sp}=(P_{a}^{sp},P_{b}^{sp})$ 
be a special decomposition with the standard part $P_{b}^{sp}=P_{b}^{st}$. By Lemma~\ref{without double}, there
 exists a sequence of
flips taking $DP$ to $DP^{sp}$ without producing  double curves. Then we apply a quasi-handle-twist $t_c$ in $c$, so that we obtain   
another special decomposition $ DP^{sp}_*$. In view of Remark~\ref{rem twist}, $DP^{sp}_*$ is flip-equivalent to $DP'$. 
The sequence of flips taking  $DP^{sp}_*$ to $DP'$ without producing double curves does exist in view of 
Lemma~\ref{without double}.

\end{proof}


%



Summarizing results of Lemmas~\ref{cover},~\ref{chart-trans} and Corollary~\ref{alg} we obtain the following theorem.

\begin{theorem}
\label{trans on charts}
(1) The charts $\C(DP)$ with coordinates $l(DP)$, where $DP$ is an admissible double pants decomposition without double curves,
provide an atlas on Teichm\"uller space $\T$.

(2) The elementary transition functions of these charts are induced by  flips and quasi-handle-twists of double pants decompositions,
each elementary transition function changes only one coordinate. 
This unique non-trivial transition function is algebraic.

(3) The compositions of elementary transition functions act transitively on the charts.

\end{theorem}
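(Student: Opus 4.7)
The plan is to assemble the theorem from the three ingredients already in place: Lemma~\ref{cover} gives a chart at every point, Lemma~\ref{chart-trans} provides the connectivity of charts via elementary moves, and Corollary~\ref{alg} supplies the algebraicity of the transition functions. So the proof is really a packaging argument rather than a new technical step.

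For part~(1), I would argue as follows. By definition each $\C(DP)$ is an honest chart on the open set where $l(DP)$ is a local homeomorphism, and Lemma~\ref{cover} says that every $\tau\in \T$ lies in the domain of at least one such $\C(DP_\tau)$, so the collection $\{\C(DP)\}$ covers $\T$. The compatibility of charts comes from the fact that two sets of length functions $l(DP)$ and $l(DP')$, evaluated on the same hyperbolic structure, record length data of the same surface, so on the overlap of domains the identity map of $\T$ in these two coordinate systems is a smooth (in fact real-analytic) bijection — this is where one uses the real-analytic dependence of geodesic lengths on the hyperbolic structure, combined with the fact that both coordinate systems are local homeomorphisms on their respective open sets.

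For part~(2), I would isolate the case in which $DP' = f(DP)$ for a single flip $f$ or a single quasi-handle-twist $t$. In both cases $DP$ and $DP'$ share all but one curve; if $c\in DP$ is the curve replaced by $c'\in DP'$, then all coordinates except $l(c)$ vs.~$l(c')$ agree on the overlap. Hence the only nontrivial component of the transition function is the expression of $\hat l(c')$ in terms of $\hat l(DP)$ (or vice versa). By Corollary~\ref{alg} this function is algebraic, since $c'$ is a specific closed curve on $S$ whose length hat-value is algebraic in $\hat l(DP)$; symmetrically for the inverse. This gives the statement.

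For part~(3), transitivity of the groupoid generated by the elementary moves on the set of charts is exactly Lemma~\ref{chart-trans}: given any two admissible double pants decompositions $DP$ and $DP'$ without double curves, there is a finite sequence of flips and quasi-handle-twists transforming $DP$ into $DP'$ such that no intermediate decomposition contains double curves. Each intermediate decomposition is therefore admissible without double curves, so gives a well-defined chart by Theorem~\ref{local}, and successive charts in the sequence differ by one of the elementary transitions analyzed in part~(2). Composing these elementary transitions produces the transition function between $\C(DP)$ and $\C(DP')$ and shows that the groupoid of elementary moves acts transitively on the atlas.

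The one point that I expect to require the most care is the verification that the elementary transitions are honestly defined as coordinate changes on nonempty overlaps: one needs to know that the domains of $\C(DP)$ and $\C(f(DP))$ (or $\C(t(DP))$) overlap in an open set of positive measure in $\T$, not just abstractly agree. This essentially follows from Lemma~\ref{step-flip} and Lemma~\ref{quasi-twist}, which say that the parametrizing property propagates along flips and quasi-handle-twists, but one should spell out that the intersection of the two chart domains is open and dense (each domain misses only a finite union of hypersurfaces, so the intersection is still open and dense), and so the algebraic transition function is well-defined on this intersection. Once this is in hand, the rest of the argument is a direct compilation of the prior lemmas.
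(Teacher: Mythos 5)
Your proof is correct and takes essentially the same route as the paper, which simply records that the theorem follows from combining Lemma~\ref{cover} (covering), Lemma~\ref{chart-trans} (transitivity under flips and quasi-handle-twists), and Corollary~\ref{alg} (algebraicity of the one changing coordinate). Your closing remark about overlaps is justified by Theorem~\ref{local}: each chart domain is $\T$ minus a finite union of hypersurfaces, so any two chart domains intersect in an open dense set.
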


\section{Deligne-Mumford compactification of moduli space}
\label{compactification}

In Section~\ref{atlas}, we showed that the Teichm\"uller space is covered by coordinate charts arising from admissible double pants 
decompositions. Since local coordinates on Teichm\"uller space are also local coordinates on the moduli space, the charts
with coordinate $l(DP)$  also compose an atlas on the moduli space. 
In this section, we show that this atlas works also for most strata in Deligne-Mumford  compactification of the moduli space.

Consider some Fenchel-Nielsen coordinates  $FN(P)$ on the Teichm\"uller space 
$$\T=\{l(c_i)>0, \ \alpha(c_j)\in \R \ |\ c_i,c_j\in P, c_j\notin \partial S \}.$$
Given a pants decomposition $P$ denote
$$\T_P=\{l(c_i)\ge 0, \ \alpha(c_j)\in \R \ |\ c_i,c_j\in P, c_j\notin \partial S \}.$$
The {\it  augmented Teichm\"uller space} $\overline \T$ is the following closure of $\T$: 
$$\overline \T=\cup_{P}\T_P $$
where the union is taken by all pants decompositions of the surface.
The points of $\overline \T\setminus \T$ correspond to {\it nodal surfaces}, i.e. to the surfaces with {\it nodal singularities}:
a nodal singularity arises when a non-trivial closed curve $c$ in $S$ is degenerated to a point (i.e. $l(c)\to 0$). 
A nodal surface is not a surface: a neighborhood of a nodal point is not homeomorphic to a disk.
We denote by $N$ the set of all nodal points on the nodal surface.
It is known that  $\overline \T /Mod= \overline \M$, where $Mod$ is the modular group and $\overline \M$ is the Deligne-Mumford 
compactification of the modular space $\M=\T/Mod$.

The space $\overline \T$ inherits topology from $\cup_P T_p = \cup_P (\R_{\ge 0}^{3g-3+2n}\times \R^{3g-3+n})$.

Given an admissible double pants decomposition $DP$ without double curves, we say that the {\it boundary of the chart } $\C(DP)$ 
is the locus of points $\tau'\in  \overline \T$  where  $l(c)=0$ for at least one $c\in DP$.

\begin{theorem}
\label{closure}
For each point $\tau' \in \overline \T$ there exists an admissible double pants decomposition $DP$ containing no double curves
and such that  $\tau'$ belongs to the boundary of the chart $\C(DP)$ with coordinates $l(DP)$. 

\end{theorem}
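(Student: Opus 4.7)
The plan is to construct, for the given $\tau' \in \overline\T$, an admissible double pants decomposition $DP = (P_a, P_b)$ without double curves such that $l(c) = 0$ at $\tau'$ for at least one $c \in DP$. Let $N$ denote the multicurve on $S$ consisting of the nodal curves at $\tau'$ (the simple closed curves of length $0$). I would extend $N$ to a pants decomposition $P_a$ of $S$, which is always possible for a multicurve; then every $c \in N \subseteq P_a$ has length $0$ at $\tau'$, and the task reduces to producing a companion pants decomposition $P_b$, disjoint from $P_a$, such that $(P_a, P_b)$ is admissible.

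For the existence of an admissible companion, I would use the topological description from the Remark in Section~\ref{dp}: a pants decomposition determines a handlebody bounded by $S$ in which every curve of the decomposition is contractible, and $(P_a, P_b)$ is admissible precisely when the two associated handlebodies glue to $\SS^3$. Since every handlebody of the appropriate genus embeds in $\SS^3$ as one side of a standard Heegaard splitting, I may realize the handlebody $H_a$ of $P_a$ in this way and let $H_b$ be the complementary handlebody; taking $P_b^{(0)}$ to be the boundary curves of any disk system of $H_b$ cutting it into balls then yields a pants decomposition with $(P_a, P_b^{(0)})$ admissible.

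Finally, I would eliminate the double curves of $P_a \cap P_b^{(0)}$ one at a time by flips in the $P_b$-part. For each double curve $c$, first render $c$ regular in $P_b$ (flipping any enclosing handle curve of $P_b$ if necessary), then flip $c$ to a new curve $c'$ satisfying both $c' \notin P_a$ and the Lagrangian general-position condition for $(P_a, P_b')$. The flip targets of a regular curve form an infinite family parametrized by powers of the Dehn twist $t_c$, with homology classes of the form $[c'_k] = [c'_0] + 2k[c]$; the constraint $c'_k \notin P_a$ excludes only finitely many $k$, and the Lagrangian condition $\L(P_a) \cap \L(P_b') = 0$ fails for at most finitely many $k$ as well, being a linear condition in $k$. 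Since flips preserve admissibility, iterating produces the desired admissible $DP = (P_a, P_b)$ without double curves.

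The main technical obstacle is the final step: verifying that the flips can be carried out sequentially without reintroducing double curves elsewhere or breaking the Lagrangian general-position condition. The crucial point is the dichotomy between the infinite family of flip targets at each stage and the finite set of exclusions that must be avoided, which guarantees that an acceptable target $c'$ always exists.
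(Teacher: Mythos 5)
Your overall plan coincides with the paper's at the start and end (extend the shrinking set $C$ to a pants decomposition $P_a$, then find an admissible companion $P_b$, then kill the double curves by flips of $P_b$), but the middle step is genuinely different. The paper constructs $P_b$ purely combinatorially: it first flips $P_a$ to a standard decomposition $P_a'$, pairs $P_a'$ with a standard companion $P_b'$, and then reverses the flips of the $a$-part, carrying $P_b'$ along; admissibility is automatic because the result is flip-equivalent to a standard decomposition by construction. You instead invoke the topological characterization (admissible $\Leftrightarrow$ Heegaard splitting of $\SS^3$) and embed $H_a$ as one side of a standard splitting. That route is legitimate, but note it quietly relies on a small extra argument that the Lagrangian general-position condition holds for the resulting pair (needed before the [FN] criterion can even be invoked, since that criterion is stated for double pants decompositions) — it does hold, by Mayer--Vietoris applied to $H_1(\SS^3)=0$, but you should say so. The paper's flip route sidesteps this issue entirely, which is one thing it buys.

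There is a genuine gap in your double-curve removal. Since $(P_a,P_b^{(0)})$ is a double pants decomposition, $\L(P_a)\cap\L(P_b^{(0)})=0$; any double curve $c$ then has $[c]\in\L(P_a)\cap\L(P_b^{(0)})=0$, i.e.\ $c$ is null-homologous. Consequently $[c'_k]=[c'_0]+k\,\hat\imath(c,c'_0)\,[c]=[c'_0]$ for every $k$: the Dehn twist $t_c$ acts trivially on homology, so the homology class of the flip target does \emph{not} depend on $k$. Your ``linear condition in $k$, fails for at most finitely many $k$'' argument therefore collapses — the Lagrangian condition is constant in $k$, so your reasoning would not rule out the possibility that it fails for \emph{every} $k$. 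The conclusion is nevertheless true, but for a different reason: because $[c]=0$, removing $c$ from the generating set of $\L(P_b)$ does not shrink the span, and since both $\L(P_b)$ and $\L(P_b')$ are saturated rank-$g$ Lagrangians with $\L(P_b)\subseteq\L(P_b')$, they must be equal; so the general-position condition is preserved by \emph{any} flip of $c$, and the only constraint on $k$ is the finite list of exclusions $c'_k\in P_a$. (Incidentally, the ``first render $c$ regular'' preamble is vacuous: a null-homologous curve cannot be the core curve of a handle, so a double curve is always regular in both parts.) The paper's statement of this step is terser than yours — ``apply (if necessary) several flips to $P_b'$ to avoid double curves'' — and leans implicitly on Lemma~\ref{without double}-style reasoning rather than on the homological computation you attempt.
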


\begin{proof}
If $\tau' \in\T$, then there is nothing to prove in view of Theorem~\ref{trans on charts}. Suppose that 
$\tau' \in (\overline \T \setminus \T)$. Then there exists a set of mutually disjoint curves $C$  on $S$
such that the surface $S'$ corresponding to $\tau'$ is obtained by contracting all curves $c_i\in C$.
It is sufficient to show that there exists an admissible double pants decomposition $DP$ containing no double curves
and such that $C\in DP$.

Consider any pants decomposition $P_a$ containing the set $C$. We will build the required decomposition
$DP=(P_a,P_b)$ in the following four steps: 
first, we transform $P_a$ by a sequence of flips to a standard 
decomposition $P_a'$; second, we build a standard double pants decomposition $(P_a',P_b')$; next, we transform $P_a'$ back to
$P_a$ by flips; finally, we apply (if necessary) several flips to $P_b'$ to avoid double curves.  
 
\end{proof}

Factorizing by the modular group $Mod$ we obtain the charts on the Deligne-Mumford compactification of the modular
space (with the natural notion of the {\it boundary of the chart} on $\overline \M$ defined as the boundary of the same chart on 
$\overline \T$ factorized by $Mod$).
Applying the same reasoning as in Theorem~\ref{closure} we obtain the following corollary.

\begin{cor}
\label{cor}
For each point $\tau' \in \overline \M$ there exists an admissible double pants decomposition $DP$ containing no double curves
and such that  $\tau'$ belongs to the boundary of the chart $\C(DP)$ with coordinates $l(DP)$. 

\end{cor}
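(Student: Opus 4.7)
The plan is to reduce the corollary to Theorem~\ref{closure} via the quotient map $\pi\colon \overline\T\to \overline\M=\overline\T/\Mod$. Given $\tau'\in \overline\M$, I would first choose any preimage $\tilde\tau'\in \pi^{-1}(\tau')\subset \overline\T$. Such a preimage exists by definition of the quotient, and it either lies in $\T$ (if $\tau'$ corresponds to a smooth hyperbolic structure) or in $\overline\T\setminus\T$ (if $\tau'$ is a nodal surface).

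Next, I would apply Theorem~\ref{closure} to $\tilde\tau'$ to produce an admissible double pants decomposition $DP$ without double curves such that $\tilde\tau'$ lies in the boundary of the chart $\C(DP)$ on $\overline\T$. By the construction in the proof of Theorem~\ref{closure}, the decomposition $DP$ can be taken so that its curves include any prescribed family $C$ of mutually disjoint curves that are pinched at $\tilde\tau'$; in particular, the coordinate functions $l(c)$ for $c\in C\subset DP$ vanish at $\tilde\tau'$, placing $\tilde\tau'$ on the boundary of $\C(DP)$ in the sense introduced just before Theorem~\ref{closure}.

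Finally, I would push the chart down via $\pi$. By the definition given in the passage just before the statement, the boundary of the chart $\C(DP)$ on $\overline\M$ is exactly the image under $\pi$ of the boundary of $\C(DP)$ on $\overline\T$. Hence $\tau'=\pi(\tilde\tau')$ belongs to the boundary of $\C(DP)$ viewed as a chart on $\overline\M$, which is the desired conclusion.

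The only subtlety I anticipate is checking that the local chart $\C(DP)$ on $\overline\T$ descends to a well-defined chart on $\overline\M$; this is handled by Proposition~\ref{ideal}, which guarantees that the only element of $\Mod$ fixing an admissible double pants decomposition without double curves is the identity, so $\pi$ restricts to an injection on a sufficiently small neighborhood of $\tilde\tau'$ inside $\C(DP)$ and the coordinate functions $l(DP)$ are $\Mod$-invariant on their natural domain. Modulo this observation, the argument is essentially a verbatim transcription of Theorem~\ref{closure} through the quotient.
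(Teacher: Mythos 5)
Your proposal is correct and follows essentially the same route the paper takes: lift $\tau'$ to $\overline\T$, apply Theorem~\ref{closure} to obtain a suitable $DP$, and descend via the quotient by $\Mod$ using the definition of the boundary of a chart on $\overline\M$ as the $\Mod$-image of the boundary on $\overline\T$. The closing remark invoking Proposition~\ref{ideal} is a sensible sanity check but is not actually needed for the corollary, since the statement only concerns membership in the (already $\Mod$-quotiented) boundary locus, not injectivity of the chart map.
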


\begin{remark}
For many of the points $\tau'\in \partial \overline \T$ the coordinates $l(DP)$ provide also a chart in a neighborhood 
$O'(\tau')=O(\tau')\cap \partial \overline \T$  (where $O(\tau')$ is some neighborhood of $\tau'$ in $\overline \T$.
It would be natural to try to cover $\partial \overline \T$ (resp. the whole boundary of $\overline \M$)
by these charts. However, in general it turns to be impossible
(see Remark~\ref{ex-impossible}). 

Below, we define a large subset of ``good'' points in the boundary and 
show that all points of this subset are covered by the charts $\C(DP)$. 

\end{remark}

\medskip

The boundary $\partial \overline \T$ is stratified: given a set $C$ of mutually non-intersecting curves in $S$, 
a {\it stratum} $\S_C$ is a locus $\{l(c_i)=0 \ | \ c_i\in C\}$. 
All nodal surfaces (of genus $g$ with $n$ boundary parts) 
with $k$ nodal singularities compose a union $\S_{2k}$ of codimension $2k$ strata. 

By a pants decomposition of a surface $S$ with punctures we mean a decomposition into generalized pairs of pants
(where a generalized pair of pants is either a sphere with three holes, or a sphere with two holes and a puncture, or a sphere
with a hole and two punctures, or a sphere with three punctures).

By a {\it pants decomposition} (respectively, {\it double pants decomposition})  of a nodal surface $S$ we mean a set of curves $P$ 
composing a pants decomposition 
(respectively double pants decomposition) in all {\it components} of $S$ (connected components of $S\setminus N$).
The nodal points are not considered as curves of the pants decomposition.

A (double) pants decomposition of $S$ is {\it standard} if the decompositions of all components are standard. 
Similarly, a double pants decomposition is {\it special} if decompositions of all components are special.

Let $DP=(P_a,P_b)$ be a double pants decomposition of $S$ containing no double curves.
Let $c\in DP$. Denote by $S'$ the nodal surface obtained
from $S$ by collapsing $c$ to a nodal singularity. Consider a set $DP'$ of curves on $S'$ obtained as a union of images of curves of $DP$
 which do not intersect $c$. Notice that
 $DP'$ is not necessarily a double pants decomposition of $S'$; however, if $c\in P_b$ intersects only one other curve of $DP$
then $DP'$ is. 

%
%

\begin{lemma}[(Collar Lemma, \cite{Ke})]
Let $c \in S$ be a simple closed geodesic on hyperbolic surface $S$ of lengths $l=l(c)$. Define $w$ by the relation 
$$\sinh l \sinh w=1.$$
Then $S$ contains a collar $Col(c)$ of width $w$ defined by  $Col(c)=\{x\in S \ | \ \rho_S(x,c)<w/2 \}$,
where $\rho_S(A,B)$ is the distance in $S$ from the set $A$ to the set $B$.

\end{lemma}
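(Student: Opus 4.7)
The plan is to prove this by lifting to the universal cover and controlling how lifts of $c$ can cluster together. Let $\pi:\H^2 \to S$ be the universal cover, let $\tilde c \subset \H^2$ be a fixed lift of $c$, and let $\gamma\in\pi_1(S)\subset PSL_2(\R)$ be the hyperbolic translation of axis $\tilde c$ and translation length $l=l(c)$. For $r>0$ set $N_r=\{y\in\H^2 : d(y,\tilde c)<r\}$, an open strip which is $\langle\gamma\rangle$-invariant. The quotient $N_r/\langle\gamma\rangle$ is an open annular neighbourhood of $c$ that maps to $S$ via $\pi$. This map is an embedding, so produces a collar $Col(c)$ of width $2r$, exactly when for every $\delta\in\pi_1(S)\setminus\langle\gamma\rangle$ one has $\delta(N_r)\cap N_r=\emptyset$; equivalently when $d(\tilde c,\delta\tilde c)\geq 2r$ for every such $\delta$. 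Thus the claim reduces to showing that whenever $\delta\notin\langle\gamma\rangle$ and $\tilde c'=\delta\tilde c\neq\tilde c$, we have $d(\tilde c,\tilde c')\geq w$.

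First I would use that $c$ is simple to conclude that the two lifts $\tilde c,\tilde c'$ are \emph{disjoint} geodesic lines in $\H^2$: if they crossed, projecting to $S$ would give a transverse self-intersection of $c$. Since $\tilde c'$ is the axis of the conjugate $\gamma':=\delta\gamma\delta^{-1}$, it is also the axis of a hyperbolic translation of length $l$. Let $d=d(\tilde c,\tilde c')$ be the perpendicular distance.

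Next I would analyze the element $\eta=\gamma\gamma'$ in $\pi_1(S)$. For two hyperbolic translations with disjoint axes and common translation length $l$, a standard computation (most cleanly done by conjugating $\gamma,\gamma'$ into explicit $SL_2(\R)$ matrices whose axes are symmetric about the common perpendicular, then taking the trace of the product) yields the identity
\begin{equation*}
\cosh\!\tfrac{L}{2}=\cosh^2\!\tfrac{l}{2}+\sinh^2\!\tfrac{l}{2}\cosh d,
\end{equation*}
where $L$ is the translation length of $\eta$ (and $\eta$ is hyperbolic precisely because the axes are disjoint). Rewriting and using $\cosh^2(l/2)-1=\sinh^2(l/2)$ gives
\begin{equation*}
\sinh^2\!\tfrac{L}{2}=\sinh^2\!\tfrac{l}{2}\bigl(1+\cosh d\bigr)\cdot\cosh^2\!\tfrac{l}{2}\Big/\bigl(\cdot\bigr),
\end{equation*}
which, after simplification, produces the hyperbolic right-angled pentagon (or ``two-axis'') formula $\sinh(l/2)\sinh(L/2)=\cosh(d/2)$, equivalently $\sinh l\sinh(L/2)\cdot(\text{factor})=\sinh d/2$. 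The form I actually need is the classical $\sinh(l/2)\,\sinh(d/2)\geq 1$ obstruction coming from discreteness: the subgroup generated by $\gamma$ and $\gamma'$ must be discrete and torsion-free, which forces $\eta$ to have translation length bounded below by the systole and, in the limiting case, yields precisely $\sinh l\sinh w=1$ as the critical separation.

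Finally I would combine these ingredients: the discreteness of $\pi_1(S)$ acting on $\H^2$ together with the trace/translation-length identity above forces $d\geq w$ with $\sinh l\sinh w=1$. Setting $2r=w$ therefore makes $N_{w/2}/\langle\gamma\rangle\hookrightarrow S$ an embedding, and its image is $Col(c)$. The main obstacle is the trigonometric/discreteness step in the third paragraph: one must convert the algebraic identity for $\tr(\gamma\gamma')$ into the sharp geometric inequality $\sinh l\sinh d\geq 1$, which requires knowing that any ``too close'' configuration of two axes would either force a non-simple $c$, a non-discrete group, or a shorter geodesic than $\gamma$ in a handle, each of which contradicts the hypotheses.
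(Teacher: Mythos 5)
The paper does not prove this lemma; it is a standard result cited verbatim from Keen's 1974 paper~\cite{Ke}, so there is no ``paper's own proof'' to compare against. Your task was therefore to supply a correct independent argument, and I will evaluate it on those terms.

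Your opening reduction is sound: lift to $\H^2$, note that simplicity of $c$ forces lifts to be pairwise disjoint, observe that since $c$ is primitive the stabilizer of the axis $\tilde c$ is exactly $\langle\gamma\rangle$, and reduce the embedding of the collar to the claim that $d(\tilde c,\delta\tilde c)\geq w$ for all $\delta\notin\langle\gamma\rangle$. That part is correct, and trace-identity arguments of the kind you gesture at can in fact be made to prove a collar lemma, so the strategy is not off-base.

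However, the third paragraph is where the proof actually has to happen, and it doesn't. The displayed expression
\[
\sinh^2\tfrac{L}{2}=\sinh^2\tfrac{l}{2}\bigl(1+\cosh d\bigr)\cdot\cosh^2\tfrac{l}{2}\Big/\bigl(\cdot\bigr)
\]
is a placeholder, not a derivation, and the identity you claim to reach, $\sinh(l/2)\sinh(L/2)=\cosh(d/2)$, does not follow from the trace formula you wrote. Starting from $\cosh\frac{L}{2}=\cosh^2\frac{l}{2}+\sinh^2\frac{l}{2}\cosh d$ and using $\cosh x-1=2\sinh^2(x/2)$ gives $\sinh\frac{L}{4}=\sinh\frac{l}{2}\cosh\frac{d}{2}$, which is a different statement. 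More seriously, the logic in the last sentences is broken: there is no reason the translation length $L$ of $\gamma\gamma'$ should be bounded below by the systole, and even if it were, the systole of $S$ need not be comparable to $l$, so no inequality of the form $\sinh l\sinh d\geq 1$ is obtained. You also need to address the sign/orientation ambiguity in the product formula: depending on the relative directions of translation along the two axes, the $\pm\sinh^2(l/2)\cosh d$ term has different sign, only one of the cases is ``dangerous,'' and one has to rule out the other (e.g.\ by considering $\gamma^{-1}\gamma'$ instead of $\gamma\gamma'$, or by an explicit orientation analysis using that the two axes project to the same simple closed curve). As written, the proof never actually establishes $d\geq w$; it asserts that ``discreteness forces the critical separation'' without supplying the inequality. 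To complete a proof along these lines you would need to compute $\tr(\gamma^{-1}\gamma')$ correctly, observe that discreteness and torsion-freeness force $|\tr|\geq 2$, and then translate this trace bound into an explicit lower bound for $d$ in terms of $l$, handling both orientation cases. Alternatively the classical route (Keen's, or Buser's treatment) proceeds by cutting along $c$ and doing hyperbolic trigonometry inside a pair of pants containing $c$ in its boundary, which avoids the orientation bookkeeping entirely.
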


It follows immediately from the Collar Lemma that if 
$a,b\in S$ are closed geodesics $b\cap a\ne \emptyset$
then contracting $a$ so that $l(a)\to 0$ implies $l(b)\to \infty$.

The Collar Lemma implies that the local coordinates $l(DP)$ degenerate while the curves $c_i$ are collapsing: 
if $a_i\cap c_i\ne \emptyset$  then $l(a_i)\to \infty$ while $c_i\to 0$  (the curve $a_i$ intersecting $c_i$ do exists since 
$c_i\notin P_a$ and $P_a$ is a maximal set of disjoint curves in $S$).
For the case $C\subset DP$ 
we define the new set of functions $\tilde l(DP,C)$  as follows:
$$ \tilde l(DP,C)=\{l(c_i), \frac{1}{l(c_j)} \ | \ c_i\in C, c_j\in  DP\setminus C  \}.$$
Clearly, $\tilde l(DP,C)$ is a local coordinate in all points of $\T$, where $l(DP)$ is a local coordinate. 
Moreover, this set of functions remains correctly defined while the curves of the set $C$ are collapsed.

\begin{definition}[{{\it Inversion}}]
An {\it inversion} of a $k$-th function of  $\tilde l(DP,C)$ is an exchange of $l(c_k)$ or  $\frac{1}{l(c_k)}$ (where $c_k\in DP$)
by  $\frac{1}{l(c_k)}$ or $l(c_k)$ respectively. 

\end{definition}

It is clear the transformation from  a set of functions $\tilde l(DP,C)$ to any other set of functions  $\tilde l(DP',C')$
may be obtained as a composition of inversions and transformations induced by flips and quasi-handle-twists 
of double pants decompositions
(here $DP$ and $DP'$ are admissible double pants decompositions containing no double curves, $C$ and $C'$ are sets of disjoint curves).

\begin{definition}[{{\it Strong and weak curves}}]
Let   $C=\{c_1,\dots,c_k\}$ be a set of mutually disjoint curves on $S$.
Each curve $c\in C$ appears two times in the boundary of $S\setminus C$. We say that $c$ is a {\it strong curve} of $C$ 
if two copies of $c$ 
appear in two different connected components of $S\setminus C$. Otherwise, we say that $c$ is {\it weak}. 

We denote by $C_{strong}\subset C$ the subset of all strong curves. 

\end{definition}

We denote by $S^1,\dots,S^l$ the connected components of $S\setminus C$.
By $\hat S^i$ we denote the connected component of $S\setminus C_{strong}$ corresponding to the component $S^i$ of $S\setminus C$
($\hat S^i$ is obtained from $S^i$ by gluing along the pairs of boundary components arising from the weak curves).

\begin{definition}[{{\it Good set of curves}}]
\label{good}
We say that a set  $C=\{c_1,\dots,c_k\}$ of mutually disjoint curves on $S$ is {\it good} if  
each connected component $\hat S^i$ of 
$S\setminus C_{strong}$ is either a surface of positive genus or has at least two boundary components contained in $\partial S$.
%

Let $\S_{good}$ be a union of all strata $\S_C$ where $C$ is a good set.
\end{definition}

\begin{remark}
It is easy to see that $\S_{2}\subset \S_{good}$, where $\S_2$ is a union of all codimension 2 strata.

\end{remark}

\begin{lemma}
\label{strat}
Let $C=\{c_1,\dots,c_k\}$ be a good set of curves.
Then there exists a special double pants decomposition $DP$ of $S$ 
such that $C\subset DP$ and each curve $c_i\in C$ is intersected by a unique curve of $DP\setminus c$. 
Moreover, collapsing any curve $c_i\in C$ to a nodal singularity leads to a special double pants decomposition 
of the obtained nodal surface.
\end{lemma}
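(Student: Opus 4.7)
The plan is to assemble $DP=(P_a,P_b)$ so that $P_b$ is standard and contains $C$, and $P_a$ is obtained from a strictly standard ancestor $(P_a^{(0)},P_b)$ by flipping each of its interior double curves in the $P_a$-part. The bulk of the work is the first step, producing the standard $P_b$.

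Cut $S$ along $C_{strong}$. The goodness hypothesis is arranged exactly so that every connected component $\hat S^i$ of $S\setminus C_{strong}$ either has positive genus (and hence admits a standard handle structure) or is genus-$0$ with at least two boundary components from $\partial S$ (in which case the total number of boundary components of $\hat S^i$ is $\geq 3$ and any pants decomposition of $\hat S^i$ is trivially standard). Inside each $\hat S^i$ I would choose a standard pants decomposition containing all the weak curves of $C$ that lie in $\hat S^i$; reassembling across $C_{strong}$ gives a standard pants decomposition $P_b$ of $S$ with $C\subset P_b$, the handle-bounding curves summing up correctly across the strong curves.

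Next, form $DP_0=(P_a^{(0)},P_b)$ by taking $P_a^{(0)}=P_b$ outside the handles of $P_b$ and choosing a different internal curve inside each handle; this is strictly standard. Apply a single $P_a$-flip to each of the $3g-3+n$ interior double curves of $DP_0$ and call the result $P_a$. By definition $(P_a,P_b)$ is a special double pants decomposition. To see that each $c_i\in C$ meets a unique curve of $DP\setminus\{c_i\}$, observe that no curve of $P_b$ meets $c_i$, while every curve of $P_a$ is either a handle-conjugate (confined to its own handle) or a flip-conjugate $b'$ of some double curve $b$ (confined to the four-holed-sphere or one-holed-torus $P_b$-neighborhood of $b$, and therefore disjoint from every curve of $P_b$ other than $b$). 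Thus the only $P_a$-curve that can meet $c_i$ is its own flip-conjugate $c_i'$, and it does meet $c_i$ in $1$ or $2$ points.

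Pinching each $c_i\in C$ to a node turns $P_b\setminus C$ into a generalized pants decomposition of the nodal surface: the two pants that shared $c_i$ in $P_b$ become two generalized pants glued at the new node, and the handle structure of $P_b$ descends to a standard decomposition on each component. The flip-conjugate $c_i'$ now threads through the node and must be dropped; but the node itself provides exactly the local partition that $c_i'$ was giving, so the remaining $P_a$-curves still decompose every component of the nodal surface into pants, in the pattern of a special decomposition built from the collapsed $P_b$. The genuinely delicate step is the very first one: weak curves of $C$ lying inside a positive-genus component $\hat S^i$ might seem to obstruct a compatible choice of handles, and the positive-genus (or $\geq 2$ $\partial S$-boundary) hypothesis is precisely what is needed to guarantee that handle-bounding curves can always be placed in the complement of the already chosen weak curves in each $\hat S^i$. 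Once this extension is in hand, the remaining two steps are a local verification.
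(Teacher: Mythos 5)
Your approach departs from the paper's in one decisive respect: you place every curve of $C$ into $P_b$, whereas the paper puts the \emph{strong} curves of $C$ into $P_b$ and the \emph{weak} curves of $C$ into $P_a$. This is not a cosmetic difference --- it is exactly what the argument needs, and your version breaks.

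Here is the concrete failure. A weak curve $c$ of $C$ is non-separating in its component $\hat S^i$ (cutting $\hat S^i$ along $c$ must leave it connected, since cutting along all weak curves in $\hat S^i$ produces the connected surface $S^i$). In a standard pants decomposition of $\hat S^i$, the non-separating curves are precisely the handle-inner curves, so your $P_b$ is forced to contain $c$ as the curve $\gamma^b$ inside some handle $\h$ bounded by a curve $\beta\in P_b$. Now form the strictly standard ancestor $(P_a^{(0)},P_b)$; inside $\h$, $P_a^{(0)}$ carries a curve $\gamma^a$ with $|\gamma^a\cap\gamma^b|=1$. When you flip the double curve $\beta$ in the $P_a$-part, the resulting $a_\beta$ satisfies $|a_\beta\cap\beta|=2$ and is disjoint from all $P_a$-curves including $\gamma^a$, but it is \emph{not} disjoint from $\gamma^b$. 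To see this, cut $\h$ along $\gamma^a$, giving a pair of pants $p_1$ with boundary $\beta,\gamma^a_1,\gamma^a_2$; then $a_\beta\cap p_1$ is an essential arc from $\beta$ to itself, which separates $\gamma^a_1$ from $\gamma^a_2$, while $\gamma^b\cap p_1$ is an arc from $\gamma^a_1$ to $\gamma^a_2$; these two arcs must cross. So the weak curve $c=\gamma^b\in C$ is met by two curves of $P_a$ --- its handle-conjugate $\gamma^a$ and the flip $a_\beta$ --- contradicting the ``unique intersecting curve'' requirement. Your parenthetical claim that a flip-conjugate of a double curve is ``disjoint from every curve of $P_b$ other than $b$'' is true when $b$ is not a handle-bounding curve but false when it is: the flip is constrained by the $P_a$-neighborhood of $b$, which inside a handle does not coincide with the $P_b$-neighborhood.

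The paper sidesteps this by putting each weak curve $c$ into $P_a$, as the $P_a$-inner curve $\gamma^a$ of a handle whose $P_b$-inner curve is some replacement $c'$ with $|c\cap c'|=1$. Then $c$'s only intersecting partner is $c'$, since the flip $a_\beta$ of the handle-bounding curve must avoid $c=\gamma^a$ by definition of a $P_a$-flip. Strong curves are placed in $P_b$ and met only by a flip-conjugate routed through a designated handle. If you want to repair your argument, the allocation of weak curves to $P_a$ (rather than $P_b$) is the missing idea, and the rest of your construction must be rebuilt around it.
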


\begin{proof}
We build a special double pants decomposition $DP=(P_a,P_b)$ with a standard part $P_b$ such that $P_b$ contains all strong curves
of $C$ and $P_a$ contains all weak curves of $C$.
We construct the decomposition $DP$ separately for each connected component $\hat S^i$ of $S\setminus C_{strong}$. 

If $\hat S^i$ is a sphere with holes, then we build the decomposition $DP$ as shown in Fig.~\ref{sphere}:
since $C$ is a good set of curves, at least two boundary components of $\hat S^i$ do not belong to $C$ 
(the two bottom boundary components in the figure). In  Fig.~\ref{sphere}.a we show the part $P_a$ of $DP$,
in  Fig.~\ref{sphere}.b we show the whole decomposition $DP=(P_a,P_b)$, 
notice that each curve of $C$ is intersected by a unique curve of $P_b$.

\begin{figure}[!h]
\begin{center}
\psfrag{1}{\scriptsize $1$}
\psfrag{2}{\scriptsize $2$}
\psfrag{3}{\scriptsize $3$}
\psfrag{4}{\scriptsize $4$}
\psfrag{5}{\scriptsize $5$}
\psfrag{6}{\scriptsize $6$}
\psfrag{7}{\scriptsize $7$}
\psfrag{8}{\scriptsize $8$}
\psfrag{9}{\scriptsize $9$}
\psfrag{10}{\scriptsize $10$}
\psfrag{11}{\scriptsize $11$}
\psfrag{a}{\small (a)}
\psfrag{b}{\small (b)}
\epsfig{file=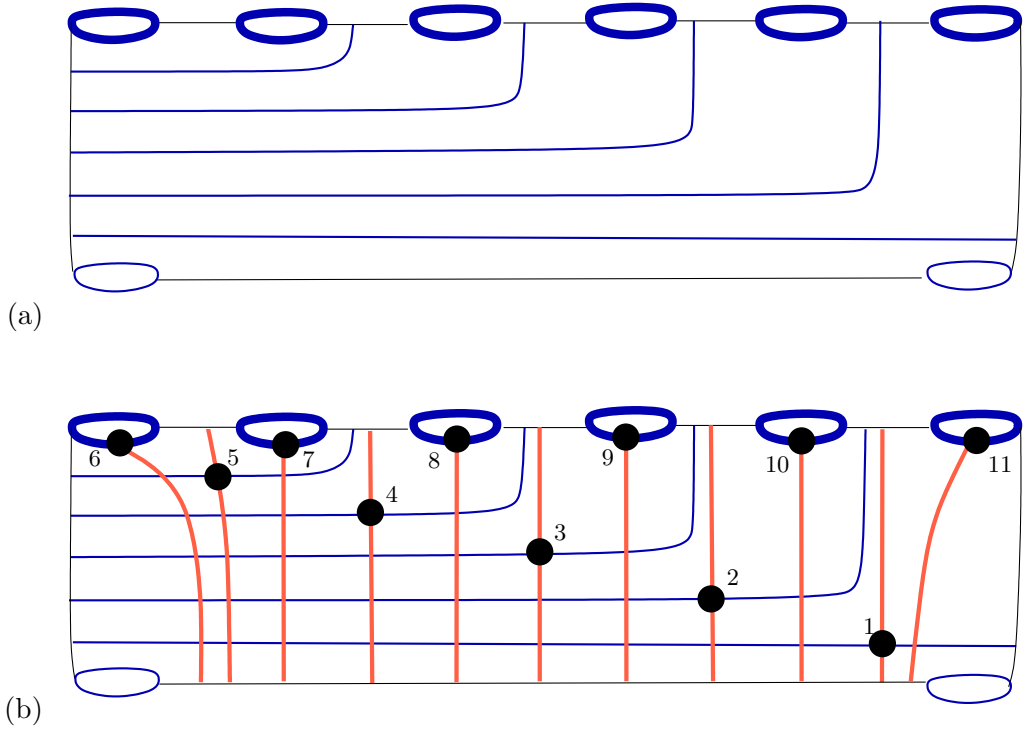,width=0.88\linewidth}
\caption{Special double pants decomposition containing $C$: case $\hat S^i=S_{0,r}$. 
The curves of $C$ are bold, each intersects a unique other curve of $DP$. 
The figure shows only the front part of the surface, the decomposition of the back part is the same.
The black nodes show the intersections of the conjugate curves.} 
\label{sphere}
\end{center}
\end{figure}

Now, suppose that $S^i$ contains at least one handle.

First, we build a standard pants decomposition $P$ containing the set $C$.  
To do this for the component $\hat S^i$, we   
build a standard decomposition  with a linear structure as in Fig.~\ref{spec}.a: 
first come all handles than come all holes.
Moreover, for each strong curve $c_j\in \hat S^i$ the curve $c_j$ is contained inside one of the handles
(more precisely, first we build the curves $\tilde c_j\in S^i$ 
which together with both copies of $c_j$ bounds a pair of pants in $S^i$,
then in $\hat S_i$ the curve $\tilde c_j$ cuts out a handle $\h_j$ containing $c_j$).

Next, we build the standard part $P_b$ of the special decomposition  $DP=(P_a,P_b)$: we take the standard decomposition $P$
and for each handle $\h_j$ of $P$ we substitute the curve $c_j\in P\cap C$ by any other curve $c_j'\in\h_j$ such that 
$|c_j\cap c_j'|=1$ (in the handles containing no curves of $C$ we do nothing).

Now, we build the part $P_a$ of the special decomposition $DP=(P_a,P_b)$. We build the restriction of $P_a$ to $\hat S^i$
as it is shown in Fig.~\ref{spec}.b: namely, each of the weak curves $c_j\in C$ is intersected only by a unique curve of $P_a$ 
lying in the same handle as $c_j$; each of the strong curves is intersected only by a curve passing through the handle $\h_0^i$.

The obtained decomposition $DP$ is special: it may be transformed to a standard decomposition by a sequence of flips
as shown in Fig.~\ref{sphere} and Fig.~\ref{spec} (we show the order of flips by numbering the intersection points of conjugate 
curves). It is easy to see that collapsing any curve $c_i\in C$ to a point we get a special double pants decompositions $DP'$ 
of the obtained nodal surface: the sequence of flips taking $DP'$ to a standard decomposition almost coincide with the corresponding 
sequence for $DP$ (the only difference is that in case of strong curve $c_i$ one needs to omit the flip in the curve conjugated to $c_i$).

\end{proof}

\begin{figure}[!h]
\begin{center}
\psfrag{1}{\scriptsize $1$}
\psfrag{2}{\scriptsize $2$}
\psfrag{3}{\scriptsize $3$}
\psfrag{4}{\scriptsize $4$}
\psfrag{5}{\scriptsize $5$}
\psfrag{6}{\scriptsize $6$}
\psfrag{7}{\scriptsize $7$}
\psfrag{8}{\scriptsize $8$}
\psfrag{a}{\small (a)}
\psfrag{b}{\small (b)}
\epsfig{file=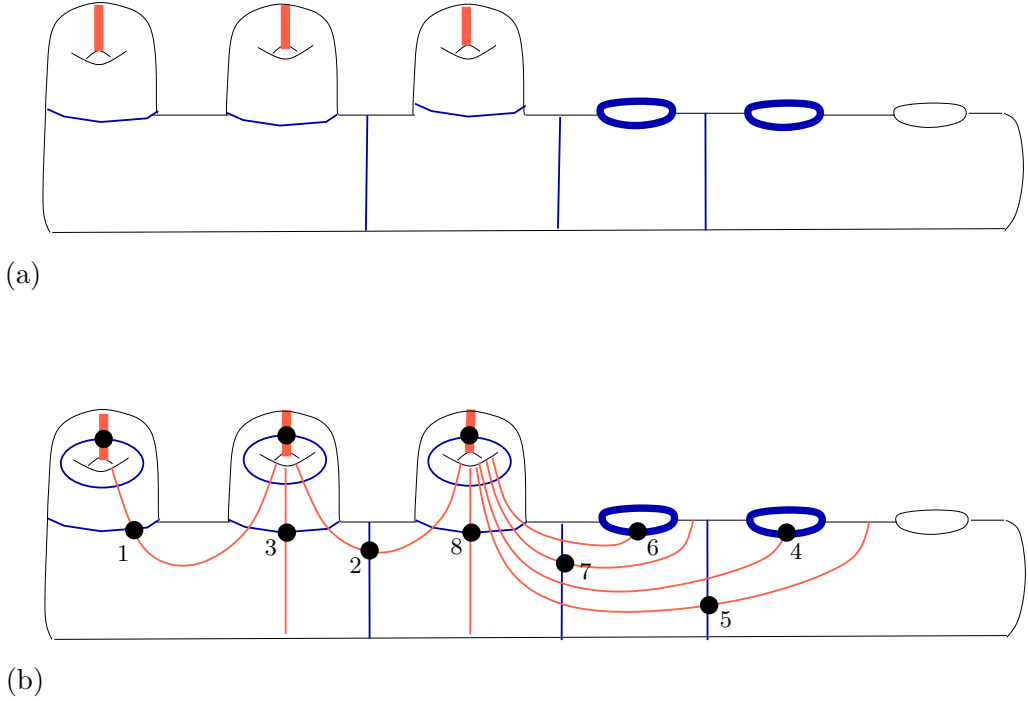,width=0.88\linewidth}
\caption{Special double pants decomposition containing $C$ 
(the curves of $C$ are bold, each intersects a unique other curve of $DP$). 
The figure shows only the front part of the surface, the decomposition of the back part is the same.
The black nodes show the intersections of the conjugate curves.} 
\label{spec}
\end{center}
\end{figure}

Let $DP$ be a special double pants decomposition of $S$, let $C\in DP$ be a good set of curves.
Denote by $\overline Z(DP,C)$ the locus in $\overline \T$ where at least one of the conjugate pairs of $DP\setminus C$ 
is an orthogonal pair.

\begin{remark}
Let $(a_i,b_i)$ be a conjugate pair of $DP$ and let $b_i\in C$.
It is easy to see that while $b_i$ is collapsed, the angle formed up by $a_i$ and $b_i$ tends to the right angle
(if lengths of other curves of $P_b$ remain fixed). This implies that $\S_C$ belongs to the closure of $Z(DP)$ in $\overline \T$.
Therefore, we can not hope that the set of functions $\tilde l(DP)$ will provide a local coordinate in the whole neighborhood of
a given point $\tau'\in \S_C$. 

Instead, we will show that for any point $\tau'\in \S_C$ there exists a suitable special double pants decomposition $DP$
such that $\tilde l(DP)$ is a local coordinate in the neighborhood of $\tau'$ in $\S_C$  as well as a local coordinate in almost all
points of the neighborhood of $\tau'$ in $\overline \T$ (more precisely,  $\tilde l(DP)$ is a local coordinate in 
 $O(\tau')\setminus Z(DP)$ where $O(\tau')$ is a neighborhood of $\tau'$ in $\overline T$.

\end{remark}

This motivates the following definition:

\begin{definition}[{{\it Almost chart}}]
\label{almost chart}
Let $C$ be a good set of curves, let $\S_C\subset \overline \T$ be the corresponding stratum and let  
$\tau'\in \S_C$ be a point. An {\it almost chart} centered at $\tau'$ is a pair
$(O(\tau'),f)$ where $O(\tau')\subset \overline \T$ is a neighborhood  of $\tau'$ and $f=(f_1,\dots,f_k)$ 
is a set of $k$ functions,
$k=\dim \T=6g-6+3n$ satisfying the following conditions:
\begin{itemize}
\item[1)] the functions $f$ are defined and continuous in $O(\tau')$; 

\item[2)] $f$ is a local coordinate in a neighborhood $O'(\tau')=O(\tau')\cap S_C$;

\item[3)] there exists a finite set $X$ of codimension 1 surfaces in $\overline \T$ such that
$f$ is a local coordinate in a neighborhood of each point $\tau\in O(\tau')\cap (\overline \T \setminus X)$.

\end{itemize}

\end{definition}

\begin{lemma}
\label{pinching}
Let $S$ be a marked hyperbolic surface considered as a point of $\T=\T(S)$.
Let $S_{good}\subset \overline \T$ be a union of the good strata.
Let $S'$ be a nodal surface with nodal singularities, such that the marked hyperbolic structure $\tau'$ of $S'$
belongs to $\S_{good}$.

Then there exists an admissible double pants decomposition $DP$ of $S$ which degenerates to an admissible double pants decomposition
$DP'$ of $S'$ such that $\tilde l(DP',C)$ provides an almost chart centered in $\tau'$.

\end{lemma}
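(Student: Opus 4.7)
The plan is to build $DP$ in two steps: first apply Lemma~\ref{strat} to the good set $C$ of collapsing curves to obtain a combinatorially suitable candidate $DP_0$, then, if necessary, adjust by Dehn twists to remove orthogonal conjugate pairs at $\tau'$. The almost chart properties then follow from the length-coordinate results of the previous sections together with the Collar Lemma.

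Let $C=\{c_1,\dots,c_k\}$ be the good set of curves on $S$ whose contraction yields the nodal structure of $S'$ at $\tau'$. Lemma~\ref{strat} provides a special double pants decomposition $DP_0$ of $S$ with $C\subset DP_0$ such that each $c_i\in C$ meets a unique other curve $c_i^*\in DP_0\setminus C$, and such that the collapse of $C$ turns $DP_0$ into a special double pants decomposition $DP_0'$ of $S'$, hence admissible in each component of $S'$. If some conjugate pair of $DP_0$ disjoint from $C$ happens to be orthogonal at $\tau'$, I run the argument of Lemma~\ref{cover}: using Lemma~\ref{twist helps} inductively, I apply a product of Dehn twists along curves of $P_b\setminus C$ to destroy these orthogonalities one by one. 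No twist is applied to a curve of $C$, so the resulting decomposition $DP$ still contains $C$ with the unique-crossing property of Lemma~\ref{strat}, and its collapse yields an admissible special decomposition $DP'$ of $S'$ at a point which now avoids $\overline Z(DP,C)$.

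Continuity of $\tilde l(DP',C)$ on a neighborhood $O(\tau')\subset\overline\T$ follows directly from the Collar Lemma: for $c_i\in C$, $l(c_i)$ extends continuously by $0$ to $\S_C$; for the unique $c_i^*$ crossing $c_i$, the Collar Lemma forces $l(c_i^*)\to\infty$, so $1/l(c_i^*)$ extends continuously by $0$; the remaining functions $1/l(c_j)$ with $c_j\in DP\setminus C$ disjoint from $C$ stay bounded and continuous. Condition~(2) of Definition~\ref{almost chart} follows from Lemma~\ref{base-param} applied componentwise to $DP'$ on $S'$, because the twist adjustment placed $\tau'$ outside the $Z$-locus of $DP'$ in the stratum. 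Condition~(3) is a consequence of Theorem~\ref{local}: inside $\T$ the functions $l(DP)$, and hence their inversions $\tilde l(DP,C)$, are local coordinates on $\T\setminus Z(DP)$. For the hypersurface set $X$ I take the closure in $O(\tau')$ of $\overline Z(DP,C)$, which by Remark~\ref{limit} is a finite union of codimension~$1$ real submanifolds.

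The main obstacle is the transversality to $\S_C$ that must accompany condition~(3): for each pinched pair $(c_i,c_i^*)$, the pair of functions $(l(c_i),\,1/l(c_i^*))$ has to locally parametrize the two real directions normal to $\S_C$ at $\tau'$, namely the pinching of $c_i$ and the Fenchel-Nielsen twist along $c_i$. The pinching direction is immediate from $l(c_i)$. For the twist direction, the pre-pinching analysis of Lemma~\ref{handle} or Lemma~\ref{flip}, depending on whether $(c_i,c_i^*)$ is handle- or flip-conjugate in $DP$, shows that $l(c_i^*)$ is monotone in the absolute value of the twist parameter along $c_i$; passing to $1/l(c_i^*)$ and controlling its asymptotics via the Collar Lemma as $l(c_i)\to 0$ yields a legitimate coordinate normal to the stratum. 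The uniqueness of $c_i^*$ guaranteed by Lemma~\ref{strat} is essential: if two distinct curves of $DP\setminus C$ both crossed $c_i$, their two reciprocal-length coordinates would collapse together at $\tau'$ and the map would cease to be a local homeomorphism transverse to $\S_C$.
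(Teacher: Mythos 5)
Your overall strategy matches the paper's: apply Lemma~\ref{strat} to obtain a special $DP$ containing $C$ with the unique-crossing property, then twist (as in Lemma~\ref{cover}) to push $\tau'$ off the $\overline Z$-locus, then verify the three conditions of Definition~\ref{almost chart}. Your continuity and transversality paragraphs are more explicit than the paper's, which simply asserts conditions 1)--3), but they are consistent with Lemmas~\ref{handle} and \ref{flip} and the Collar Lemma.

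There is, however, a gap in the sentence ``No twist is applied to a curve of $C$, so the resulting decomposition $DP$ still contains $C$.'' This is a non-sequitur: a Dehn twist $t_b$ moves every curve meeting $b$, regardless of whether $b$ itself lies in $C$. What you actually need is that each $b_i$ you twist along is \emph{disjoint} from every curve of $C$, not merely that $b_i\notin C$. This can be extracted from the unique-crossing property of Lemma~\ref{strat}, provided you twist only along the $P_b$ member $b_i$ of each offending conjugate pair $(a_i,b_i)$ that survives the collapse: such $b_i$ is not the unique curve $d_c$ meeting any $c\in C$ (since $d_c$ is discarded by the collapse), hence $b_i\cap C=\emptyset$ and $t_{b_i}$ fixes $C$. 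But this reasoning must be made explicit. The paper takes a more robust route: it allows twists along arbitrary $P_b$ curves and explicitly addresses the case where $t_{c_j}$ moves a weak curve $c\in C$; there $c_j$ is necessarily the handle-conjugate of $c$, and the twist $t_{c_j}$ is replaced by a twist $t_c$ along $c$ itself, which preserves $C\subset DP$ while achieving the same non-orthogonality. Either repair works, but as written your argument jumps from ``not twisting along a curve of $C$'' to ``$C$ is preserved,'' and that inference is false in general.
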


\begin{proof}
Since $S'$ belongs to $S_{good}$, the nodal surface $S'$ is obtained from $S$ by collapsing the curves contained in some good set $C$.

By Lemma~\ref{strat} there exists a special double pants decomposition $DP=(P_a,P_b)$
with standard part $P_b$, such that 
\begin{itemize}
\item[(1)] $c_i\in P_b$ for all strong curves $c_i$ of $C$; 
\item[(2)] $c_i\in P_a$ for all weak curves $c_i$ of $C$; 
\item[(3)] for each $c_i\in C$ the decomposition $(P_a,P_b)$ contains a unique curve $d_i$ intersecting $c_i$.

\end{itemize}

By Lemma~\ref{strat} by collapsing a curve $c_i\in C$ one obtains a special double pants decomposition of the obtained nodal 
surface, and, after collapsing all curves $c_i\in C$, we obtain a special double pants decomposition $DP'$ of $S'$.
Clearly, the set of functions  $\tilde l(DP,C)$ is defined and continuous in a neighborhood $O'$ of $\tau'$.

Using Lemma~\ref{twist helps} (as in the proof of Lemma~\ref{cover}) we may apply to $DP$ several twists 
(along the curves of $P_b$)
so that the resulting special decomposition $DP_*=t_{c_{m}}^{k_{m}}\circ \dots \circ t_{c_1}^{k_1}(DP)$ satisfies
 $\tau'\notin \overline Z(DP'_*)$. 

Suppose that some of the twists $t_{c_j}$ changes a curve  $c\in C$. Then $c\in P_a$, so $c$ is a weak curve of $C$. 
The curve $c_j$ then is the curve conjugated to $c$ in $DP$.    
Clearly, we may substitute a degree of the twist $t_{c_j}$ by a degree of the twist $t_c$ 
so that in the resulting double pants decomposition the images of curves $c$ and $c_j$ are not orthogonal to each other.
So, after several substitutions we transform $DP_*$ to a special decomposition  $DP_{**}$
such that  $\tau'\notin \overline Z(DP'_{**})$ and $C\in DP_{**}$. This implies the conditions 2) and 3) 
of Definition~\ref{almost chart}. The condition 1) of the same definition holds for  $\tilde l(DP_{**},C)$ 
in some neighborhood $O'(\tau')\subset \overline \T$  trivially.
Hence, the pair $(O'(\tau'), \tilde l(DP_{**},C))$ provides an almost chart centered at $\tau'$.

\end{proof}

Now, consider the moduli space $\M=\T/Mod$.
A local chart in a neighborhood of $\tau\in \T$ projects to a local chart in a neighborhood of $\pi(\tau)\in \M$
(where $\pi$ is a factorization by $Mod$) unless $\tau$ is a hyperbolic structure with non-trivial automorphism group,
or, equivalently, unless $\pi(\tau)$ is an orbifold point of $\M$.
Composing this with Lemma~\ref{pinching} and Theorem~\ref{trans on charts} we obtain the following theorem:

\begin{theorem}
\label{atlas on M}
Let $S$ be a nodal surface, let $\M(S)$ be its moduli space and let $\overline \M(S)$ be the Deligne-Mumford compactification of $\M$.
Let $\S_{good}^{\M}=\S_{good}/Mod$ be the union of good strata in $\M$.    
Let $O$ be a locus of orbifold points of $\M$, let $\overline O$ be the closure of $O$ in $\overline \M$.
Then 

\begin{itemize}
\item[(1)] the charts with coordinates $\tilde l(DP,C)$  provide an atlas on $\M\setminus O$ and on $\S_{good}^\M\setminus \overline O$, 
(here  $C$ is a good set and $DP$ is an admissible double pants decomposition without double curves);

\item[(2)] each point $\tau'\in S_{good}^\M\setminus \overline O$ is covered by some almost chart  $(O'(\tau'),\tilde l(DP,C))$;

\item[(3)] the elementary transition functions of these charts (almost charts) are inversions and transformations 
induced by  flips and quasi-handle-twists of double pants decompositions;
each elementary transition function change only one coordinate; 
this unique non-trivial transition function is algebraic;

\item[(4)] the compositions of elementary transition functions act transitively on the union of charts and almost charts.

\end{itemize}

\end{theorem}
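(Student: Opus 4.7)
The theorem is essentially an assembly of results already in place. The main inputs are Theorem~\ref{trans on charts} (the atlas of charts $\C(DP)$ on $\T$), Lemma~\ref{pinching} (existence of almost charts at each good point of $\overline\T\setminus O$), Corollary~\ref{alg} (algebraicity of length functions in $\hat l(DP)$), and Lemma~\ref{chart-trans} (flips and quasi-handle-twists without double curves act transitively on admissible double pants decompositions without double curves). The main new ingredient is the systematic use of inversions $l(c)\leftrightarrow 1/l(c)$ to interpolate between the interior charts $\tilde l(DP,\emptyset)=l(DP)$ and the boundary almost charts $\tilde l(DP,C)$ for $C\neq\emptyset$.

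For parts (1) and (2), observe that the quotient projections $\pi:\T\to\M$ and $\overline\pi:\overline\T\to\overline\M$ are local homeomorphisms away from, respectively, $O$ and $\overline O$. So each chart $\C(DP)\subset\T$ of Theorem~\ref{trans on charts} descends to a chart of $\M\setminus O$, and taking $C=\emptyset$ we see that these are exactly the charts with coordinates $\tilde l(DP,\emptyset)=l(DP)$. This yields the atlas on $\M\setminus O$. For a point $\tau'\in\S_{good}^\M\setminus\overline O$, pick a lift $\tilde\tau'\in\S_{good}$; Lemma~\ref{pinching} furnishes an admissible $DP$ without double curves and a good set $C\subset DP$ so that $(O(\tilde\tau'),\tilde l(DP,C))$ is an almost chart centered at $\tilde\tau'$. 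Pushing forward by $\overline\pi$ gives the almost chart in $\overline\M$, proving (2); varying $\tau'$ and restricting to $\S_C$ yields the stratum atlas in (1).

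For (3), any transformation between two sets $\tilde l(DP,C)$ and $\tilde l(DP',C')$ factors as a sequence of (i) single-coordinate inversions $l(c_k)\leftrightarrow 1/l(c_k)$, (ii) flips of $DP$, and (iii) quasi-handle-twists of $DP$. Each of these modifies only one coordinate. Inversions are trivially algebraic in that coordinate. For flips and quasi-handle-twists, the only changing function is $\hat l(c)$ for the new curve $c$, which is algebraic in $\hat l(DP)$ by Corollary~\ref{alg}; since $\hat l=2\cosh(l/2)$ and passing to $1/l$ via inversion is a rational operation, the transition is algebraic in the $\tilde l$-coordinates as well. Part (4) then follows from Lemma~\ref{chart-trans}, which provides a sequence of flips and quasi-handle-twists (without producing double curves on the way) connecting $DP$ to $DP'$; composing with inversions as needed to interchange the role of $C$ proves transitivity on the enlarged collection of charts and almost charts.

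\emph{Main obstacle.} The hard part is part (2): checking that the almost chart produced by Lemma~\ref{pinching} on $\overline\T$ genuinely descends to an almost chart on $\overline\M$ near $\tau'$. One must verify that, off the orbifold locus $\overline O$, a neighborhood $O(\tilde\tau')$ can be shrunk so that $\overline\pi|_{O(\tilde\tau')}$ is a homeomorphism onto its image, that the local-coordinate property in $O(\tilde\tau')\cap \S_C$ is preserved after quotienting, and that the exceptional codimension-$1$ set $Z(DP)$ intersects the image in a set of the required form. The definition of good set (Definition~\ref{good}) and Lemma~\ref{strat} are what make this possible, by ensuring both that the needed $DP$ with $C\subset DP$ and with each $c_i\in C$ intersected by a unique curve of $DP$ exists, and that the resulting Fenchel--Nielsen data degenerate in a controlled way as the curves of $C$ are pinched.
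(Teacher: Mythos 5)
Your proof follows essentially the same route as the paper's: the paper's own proof is the single sentence preceding the theorem, asserting that local charts on $\T$ descend to $\M$ away from orbifold points and that the result then follows by combining this with Lemma~\ref{pinching} and Theorem~\ref{trans on charts}; you unpack exactly this assembly, invoking the same inputs (plus Corollary~\ref{alg} and Lemma~\ref{chart-trans}, which are already baked into Theorem~\ref{trans on charts}) and the paper's remark that transitions among the $\tilde l(DP,C)$ factor into inversions, flips, and quasi-handle-twists. Your flagged obstacle about descending almost charts through $\overline\pi$ is a fair point, but the paper simply does not address it beyond the sentence about orbifold points, so you are not missing anything the paper actually proves.
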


\begin{remark}
\label{ex-impossible}
We do not claim that the Definition~\ref{good} of the good strata exhaust all the points of $\partial \overline \T$ 
(resp. $\partial \overline \M$) covered by the almost charts of our atlas. 
However, some restrictions for the ``good'' points covered by the atlas  
are indispensable. For example, if $S=S_{3,0}$ and $C$ is a set of three curves cutting a pair of pants out of $C$
(see Fig.~\ref{ex-imp}) then it is possible to prove that in each 
admissible double pants decomposition $DP$ such that $C\in DP$ 
the set of curves $\{ c_i\in DP\setminus C, \  c_i\cap C\ne \emptyset\}$ contains more than three curves.
Hence, after retracting the curves of $C$, any decomposition $DP$ contains less curves (of finite non-zero length)
than required.
This implies that the points $\tau'\in \S_C$ can not be covered by any chart of our atlas.

\end{remark}

\begin{figure}[!h]
\begin{center}
\psfrag{1}{\scriptsize $c_1$}
\psfrag{2}{\scriptsize $c_2$}
\psfrag{3}{\scriptsize $c_3$}
\epsfig{file=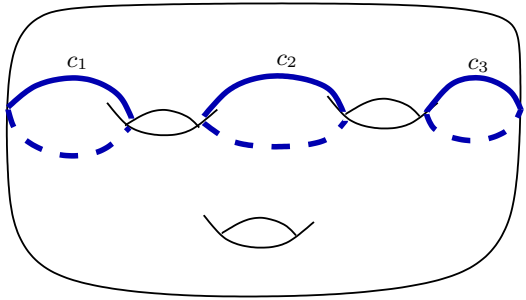,width=0.45\linewidth}
\caption{Example of the stratum not covered by the atlas: $S=S_{3,0}$, $C=\{c_1,c_2,c_3\}$.} 
\label{ex-imp}
\end{center}
\end{figure}

\end{document}